\numberwithin{equation}{section}
\theoremstyle{plain}
\newtheorem{lemma}{Lemma}[section]
\newtheorem{proposition}[lemma]{Proposition}
\newtheorem{theorem}[lemma]{Theorem}
\newtheorem{corollary}[lemma]{Corollary}
\theoremstyle{definition}
\newtheorem{definition}[lemma]{Definition}
\newtheorem*{definition*}{Definition}
\newtheorem{remark}[lemma]{Remark}
\newtheorem{example}[lemma]{Example}
\newtheorem{examples}[lemma]{Examples}
\definecolor{brown}{RGB}{150,100,0}
\definecolor{purple}{RGB}{150,0,100}
\definecolor{grey}{RGB}{128,128,128}
\newcommand{\grey}[1]{\textcolor{grey}{#1}}
\newcommand{\R}{{\mathbb R}}
\newcommand{\E}{{\mathbb E}}
\newcommand{\Z}{{\mathbb Z}}
\newcommand{\N}{{\mathbb N}}
\renewcommand{\P}{{\mathbb P}}
\newcommand{\Q}{{\mathbb Q}}
\newcommand{\Aa}{{\mathcal A}}
\newcommand{\Bb}{{\mathcal B}}
\newcommand{\Cc}{{\mathcal C}} 
\newcommand{\Ee}{{\mathcal E}}
\newcommand{\Ff}{{\mathcal F}}
\newcommand{\Hh}{{\mathcal H}}
\newcommand{\Ll}{{\mathcal L}} 
\newcommand{\Mm}{{\mathcal M}} 
\newcommand{\Nn}{{\mathcal N}}
\newcommand{\Pp}{{\mathcal P}}
\newcommand{\Ss}{{\mathcal S}}
\newcommand{\Vv}{{\mathcal V}}
\newcommand{\Xx}{{\mathcal X}}
\newcommand{\Yy}{{\mathcal Y}}
\newcommand{\Zz}{{\mathcal Z}}
\mathchardef\mhyp="2D
\newcommand{\om}{{\omega}}
\newcommand{\eps}{{\varepsilon}}
\newcommand{\Meas}{{\rm Meas}}
\renewcommand{\a}{{\mathfrak a}}
\def\NABLA#1{{\mathop{\nabla\kern-.5ex\lower1ex\hbox{$#1$}}}}
\def\Nabla#1{\nabla\kern-.5ex{}_#1}
\newcommand{\LRA}{\Longrightarrow}
\newcommand{\LLR}{\Longleftrightarrow}
\newcommand{\la}{\langle}
\newcommand{\ra}{\rangle}
\renewcommand{\:}{\colon}
\mathchardef\mhyp="2D
\DeclareMathOperator{\Probm}{Probm}
\DeclareMathOperator{\Id}{Id}
\begin{document}
\title[Supervised learning with probabilistic morphisms]{Supervised learning with probabilistic morphisms and kernel mean embeddings}
\author[H. V. L\^e]{H\^ong V\^an L\^e}	
\address{Institute of Mathematics of the Czech Academy of Sciences, Zitna 25, 11567 Praha 1, Czech Republic}

\email{hvle@math.cas.cz}

\thanks{Research of HVL was supported by the Institute of Mathematics,  Czech Academy of Sciences (RVO: 67985840) and GA\v CR-project 22-00091S}
	\date{\today}
\keywords{Markov kernel,   supervised learning model,  loss function, generalization ability, inner measure, kernel mean embedding, $\eps$-minimizer}
\subjclass[2010]{Primary: 46N30, Secondary: 60B10, 62G05, 18N99}

\begin{abstract}    In this  paper  I  propose a   generative model of supervised learning  that unifies  two approaches  to supervised learning,  using    a concept  of a correct loss function. 
	Addressing  two 
	measurability problems,  which  have been ignored  in  statistical  learning  theory,  I propose  to use  convergence  in outer  probability  to characterize  the consistency  of a learning  algorithm. Building upon these results,  I extend a result due to Cucker-Smale, which addresses the learnability of a regression model, to the setting of a conditional probability estimation problem.  Additionally, I present a variant of Vapnik-Stefanuyk's regularization method for solving stochastic ill-posed problems, and using  it to prove the  generalizability of   overparameterized supervised learning models. 
\end{abstract}

\maketitle
\tableofcontents

\section{Introduction}\label{sec:intro}
\subsection{The concept of  a correct loss function in supervised   learning theory}
Let $\Xx$ and  $\Yy$ be   measurable spaces.
In   supervised learning,   we are given a data set of labeled items:     
$$S_n = \{(x_1, y_1), \ldots, (x_n, y_n)\}\in  ((\Xx \times \Yy)^n, \mu^n),$$  where $\mu$ is   an (unknown)  probability measure that governs the distribution  of i.i.d. labeled items $(x_n, y_n)\in \Xx \times \Yy$. The  goal  of a learner   in this scenario
is to find  the best approximation  $f_{S_n}$   of  the stochastic  relation between   the    input  $x \in \Xx$  and  its label $y \in \Yy$, formalized as  the conditional  probability  measure $[\mu_{\Yy|\Xx}]$ for a probability measure  $\mu$ on $(\Xx \times \Yy)$   with respect to the  projection  $\Pi_{\Xx}: \Xx \times \Yy \to \Xx$. 
 We refer to $[\mu_{\Yy|\Xx}]$  as the supervisor operator \cite[p. 35-36, p. 48]{Vapnik1998}.
 It is important to note that if $\Xx$ consists  of a  single  point,  this  problem is equivalent  to   estimating and approximating probability measures   on $\Yy$.

The concept of   a best  approximation  requires  a  specification of a hypothesis space $\Hh$ of possible  predictors as  well   a  the notion  of a {\it correct} loss function  that measures  the  deviation  of a  possible predictor from the  supervisor operator.  The loss function concept   in statistical analysis was  introduced by Wald in his  statistical  decision  theory \cite{Wald50}, which   can be traced back  to  Laplace's theory  of errors, further developed   by Gauss  and many  others \cite{Sheynin77}.  The importance of a correct  choice  of   a loss function has been discussed  by Chentsov. He noted that well-known examples of statistical point estimation problems, when properly formulated, admit asymptotically optimal decision rules, and the simplicity of the results  proved there is largely due to the correct choice of a loss function \cite[p. 391]{Chentsov72}.

In this  article,  we focus on characterizing the deviation of a predictor $h$ in a hypothesis class $\Hh$ from the  supervisor operator $[\mu_{\Yy|\Xx}]$,  using  the notion  of a correct  loss  function in the underlying  generative  model  of supervised learning.
Our   approach  utilizes the concept  of  a probabilistic morphism, which is  a  categorical name for    a  Markov kernel and can also be interpreted as a regular conditional probability. If   $\Yy$  is    a  separable  metrizable   topological  space  with  the Borel  $\sigma$-algebra  $ \Bb (\Yy)$,  we propose an alternative  characterization  of a regular  conditional  probability measure $\mu_{\Yy|\Xx}$    as a minimizer of   mean square error  on the space of all probabilistic morphisms  from $\Xx$ to $\Yy$,
 employing    kernel mean embeddings.
 
\subsection{Previous works}\label{subss:Pw}

1.   In his  book ``Statistical  Learning  Theory" \cite{Vapnik1998}   Vapnik discussed two approaches to the learning problem: the first approach evaluates the quality of the chosen function using a risk function, while the second approach involves estimating stochastic dependencies.  According to Vapnik, by estimating stochastic dependencies, problems such as pattern recognition and regression estimation can be solved effectively 
\cite[p. 19]{Vapnik1998}.  Vapnik connected the estimation of the supervisor operator  $[\mu_{\Yy|\Xx}]$,   which corresponds to estimating stochastic dependencies in supervised learning, to classical aspects of supervised learning theory (such as pattern recognition and regression estimation problems) through the Bayes decision rule  \cite[p. 37]{Vapnik1998}.  It is important to note that Vapnik considered conditional probability measures  $[\mu_{\Yy|\Xx}]$  but did not explicitly consider their regular versions, denoted here as    $\mu_{\Yy|\Xx}$.  However, in all cases where  $\Xx, \Yy$ are    Borel   subsets  of  $\R^n$,   a regular conditional probability measure   $\mu_{\Yy|\Xx}$  exists for any  Borel   probability measure $\mu$ on  $\Xx \times \Yy$ with respect to the projection $\Pi_\Xx: \Xx\times \Yy \to \Xx$. For further  details, refer to  Subsection \ref{subs:not}. Vapnik addressed the problem of conditional probability estimation in multi-classification supervised learning as well as conditional density estimation in supervised learning when  $\Xx \subset \R^n$  and $\Yy \subset \R$.  He proposed a class of solutions for these problems \cite[p. 36-39, 333-334, 337-338]{Vapnik1998}. See 
 also Examples \ref{ex:densitycorrect}(3 $\&$ 4)  and  Remark \ref{rem:vapnik}  for additional information.

2.  The significance of conditional probability in classical supervised learning is evident in Bayes' decision rule, which is discussed in detail in  \cite[Chapter 2]{DGL1996}. In \cite{VI2019}  Vapnik-Izmailov  considered  the machine learning  problem as  a problem of estimating the conditional  probability function rather  than problem  of of finding function  that minimizes  a given loss function. In their work, Talwai, Shameli, and Simchi-Levi \cite{TSS2022} treated the estimation of conditional probability as a supervised learning model, utilizing kernel mean embedding.    Their work builds upon the results of Park-Muandet  \cite{PM2020}, where  the authors  formalized  Gr\"unerw\"alder and co-authors' proposal to use   mean square loss for estimating  conditional mean embedding \cite{GLGB12}. For precise formulations of their results related to the present paper, refer to  Remark \ref{rem:gl} (2 $ \&$  3)   and  Remark  \ref{rem:rkhs}.

\subsection{Main contributions} 

In this article, we present several  contributions to  statistical learning  theory:

1. Introduction of the Generative Model: We introduce the concept of a generative model of supervised learning, which serves as a comprehensive framework encompassing all models in supervised learning theory, as well as models of probability measure estimation. In particular, by incorporating the concept of a correct loss function, we  unify   two approaches   to statistical learning  mentioned  above   in the  Generative  Model.

2. Characterization of Regular Conditional Probability Measures: We provide characterizations of regular conditional probability measures  $\mu_{\Yy|\Xx}$, which are essential for understanding the underlying probabilistic relations in supervised learning.  These characterizations allow us to establish a solid foundation for our subsequent analyses and investigations (Theorems \ref{thm:marginal}, \ref{thm:errorRKHS}).

3. Identification of Correct Loss Functions: By leveraging our characterizations of regular conditional probability measures, we identify and analyze various examples of correct loss functions. These include well-known loss functions such as the mean square error, the 0-1 loss function,  the log-likelihood function and the Kullback-Leibler divergence widely used in statistics and statistical learning theory. We also highlight other natural loss functions employed in mathematical statistics and statistical learning theory  (Examples \ref{ex:densitycorrect},  Examples \ref{ex:kernel}).

4. Quantifying  the Consistency  of a  Learning  Algorithm  using Inner Measure: We propose the use of inner measure, equivalently, convergence in outer probability, to quantify the consistency of a learning algorithm. We also provide a sufficient condition for the consistency of a learning algorithm. These findings shed light on the potential of a learning algorithm to perform well on unseen data (Definition \ref{def:genbi}, Theorem \ref{thm:uniform},  Remarks \ref{rem:genbi}, \ref{rem:csl2}).   In addition, relaxing the convergence  in probability in the classical  theory of statistical  learning by convergence in outer probability in our  theory  made  many  previous  results  in statistical learning  theory rigorous (Remarks \ref{rem:genbi}, \ref{rem:csl2}, \ref{rem:rkhs}, \ref{rem:vapnik}.)

5. Generalization of Cucker-Smale's Result: Building upon our characterizations of regular conditional probability measures, we present a generalization of Cucker-Smale's result (Theorem \ref{thm:msek}, Remark \ref{rem:rkhs}). 

6. Variant of Vapnik-Stefanyuk's Regularization Method: We introduce a variant of Vapnik-Stefanyuk's regularization method for solving stochastic ill-posed problems. By incorporating inner measure, we demonstrate its effectiveness in proving the learnability of conditional probability estimation problems (Theorem \ref{thm:vapnik73}, Remark \ref{rem:vapnik}, Theorem \ref{thm:main}, Corollary \ref{cor:main}. Example \ref{ex:main}). As  a result,  we provide   examples of  learnable  ``overparameterized"   discriminative models of supervised learning (Corollary \ref{cor:main}(2)).

\subsection{Organization of  this   article}\label{subs:org}
This article is organized   into several sections  as follows. 

Section \ref{sec:unified}: Bounded s-Probabilistic Morphisms.
In this section, we introduce the concept of a bounded s-probabilistic morphism, which extends the notion of a probabilistic morphism. Bounded  s-probabilistic morphisms are useful  for our investigation of variations of probabilistic morphisms. We characterize a regular conditional probability measure  $\mu_{\Yy|\Xx}$
among bounded s-probabilistic morphisms as a solution to a linear operator equation (Theorem \ref{thm:marginal}).  Additionally, this section includes various technical results that will be utilized in later parts of the paper.

 Section  \ref{sec:gen}: Generative Model of Supervised Learning.
 In Section  \ref{sec:gen}, we utilize probabilistic morphisms to introduce the concept of a generative model of supervised learning and the notion of a correct loss function. Drawing on the results from Section \ref{sec:unified}, we illustrate these concepts using examples such as the Fisher-Wald setting of density estimation and Vapnik's setting of conditional probability estimation. We propose the concept of consistency of a learning algorithm using inner measure (Definition \ref{def:genbi}), which is equivalent  to the concept  of convergence in outer  probability (Remark \ref{rem:genbi}),  and provide a sufficient condition for the consistency of a learning algorithm  (Theorem \ref{thm:uniform}). Furthermore, we discuss the relationship between these concepts and classical notions of consistency (Remarks \ref{rem:genbi}, \ref{rem:csl2}).  We also introduce the concept of a C-empirical risk minimizing (C-ERM) algorithm, which guarantees the existence of such algorithms for any statistical learning model (Definition \ref{def:aserm}, Remark \ref{rem:cerm}).
 
 Section \ref{sec:loss}: Regular Conditional Probability Measures  via Kernel Mean Embedding.
 Assuming that $\Yy$ is a separable metrizable topological space, this section builds upon the results of Section \ref{sec:unified} to characterize regular conditional probability measures  $\mu_{\Yy|\Xx}$  as the minimizers of mean square error (Theorem \ref{thm:errorRKHS}, Corollary \ref{cor:correct}). We discuss related findings by Park-Muandet \cite{PM2020}, Gr\"unerwalder et al. \cite{GLGB12}, and Talwai-Shami-Simchi-Levi \cite{TSS2022} in the context of our analysis (Remark \ref{rem:gl} (2 $\&$ 3)).

 Section  \ref{sec:genas}: Generalization of Cucker-Smale's Result.
 Drawing on the previous sections, we present a generalization of a result originally established by Cucker-Smale (Theorem \ref{thm:msek}). We also discuss related findings and implications in Remark \ref{rem:rkhs}.
 
 Section  \ref{sec:vapnik} : Variant of Stefanyuk-Vapnik's Result and Applications.
 In this section, we present a variant of Vapnik's result that incorporates inner measure (Theorem \ref{thm:vapnik73}). We explore its applications in conditional probability estimation problems, discussing the implications in
 Remark \ref{rem:vapnik73}, Theorem \ref{thm:main}, Corollary \ref{cor:main} and Example \ref{ex:main}.

Last Section \ref{sec:discuss}: Discussion of Results.
In the final section of the paper, we  summarize the main findings, highlight their significance, and offer potential future research directions.

\section{Bounded s-probabilistic morphisms}\label{sec:unified}
In this section  we introduce  the concept  of  a  {\it bounded s-probabilistic morphism}   and compare  it with  the concept of a probabilistic morphism and with  the concept  of  an s-finite kernel (Definitions \ref{def:probm}, Remark \ref{rem:s-finite}).    We  then study the properties  of   bounded s-probabilistic  morphisms (Definition  \ref{def:graph}, Lemmas \ref{lem:meast}, \ref{lem:diag}, \ref{lem:almostsurely}). Finally,   we provide a characterization  of  regular  conditional  measures using  bounded  s-probabilistic morphisms (Theorem \ref{thm:marginal}).

 \subsection{Notation, conventions, and preliminaries}\label{subs:not}
 
 \
 
 $\bullet$ Given a measurable  space $\Xx$, let     $\Sigma_\Xx$  denote the  $\sigma$-algebra of $\Xx$. We use  $\Ss(\Xx)$, $\Mm(\Xx)$  and $\Pp(\Xx)$  to represent the space  of all finite  signed  measures, the  space of all finite (nonnegative)   measures, and the space of all probability measures  on $\Xx$, respectively. The total variation norm    on $\Ss(\Yy)$ is denoted  by $\| \cdot \|_{TV}$.
 
 $\bullet$  For a given $x \in \Xx$, $\delta_x$  represents the Dirac measure
 concentrated  at $x$.
 
 $\bullet$ For   a data  set $S = (x_1, \ldots, x_n)\in \Xx^n$, 
 $\mu_S$  denotes  the empirical probability  measure  defined as 
 $$\mu_S: = \frac{1}{n} \sum_{i =1}^n \delta_{x_i}  \in \Pp (\Xx).$$

 $\bullet$  $\Pp_{emp} (\Xx)$  represents the set of  all empirical probability measures  on $\Xx$.

 $\bullet$ When $\Xx$   is a topological space, we   
  consider the   Borel  $\sigma$-algebra  $\Bb(\Xx)$ unless  stated otherwise.
 
 $\bullet$  For  a measurable   mapping $f : \Xx \to \Yy$  and $\mu \in \Ss (\Xx)$,  $f_* \mu$  denotes  the pushforwarded  measure  on $\Yy$,  defined as  $f _* \mu  (B) : = \mu  ( f^{-1} (B))$ for any $B \in \Sigma_\Yy$.

 $\bullet$  The product  $\Xx \times \Yy$ of  measurable spaces $(\Xx, \Sigma_\Xx)$ and  $(\Yy, \Sigma_\Yy)$  is  assumed to be  endowed  with the  $\sigma$-algebra  $\Sigma_\Xx \otimes \Sigma_\Yy$ unless otherwise  stated.  For a  probability measure  $\mu \in \Pp (\Xx\times \Yy)$, $\mu_\Xx: = (\Pi_\Xx)_* \mu$  represents the marginal  probability  measure of $\mu$ on $\Xx$.
 
 $\bullet$  In this paper we   consider only   measurable spaces  $\Xx, \Yy$  such  that   any $\mu \in \Pp (\Xx \times  \Yy)$ has a  regular  conditional probability measure
 $\mu_{\Yy|\Xx}$ for $\mu$  relative to $\Pi_\Xx$.  If $\Yy$ is  a Souslin measurable space  (a measurable space isomorphic to a measurable space associated with a Souslin metrizable space  \cite[Definition 16, p.46-III]{DM78},  \cite[Remark 1]{JLT2021}),   then this regular conditional probability measure    $\mu_{\Yy|\Xx}$ always exists,   \cite[Corollary 10.4.15, p.  366, vol. 2]{Bogachev2007}, \cite[Theorem 3.1 (6)]{LFR2004}.

 $\bullet$  $\mathbf{Meas} (\Xx, \Yy)$ represents the  space of all   measurable  mappings  from $\Xx$ to $\Yy$.  If $\Xx, \Yy$  are topological  spaces,  $C(\Xx, \Yy)$  denotes the space of all continuous   mappings  from $\Xx$ to $\Yy$.
 
 $\bullet$ $\Id_X$  represents the identity mapping on $X$.

$\bullet$  For $A \in \Sigma_\Xx$, $1_A$  denotes the characteristic  function of $A$.

 $\bullet$  $\Ff_s(\Xx)$ denotes the vector space  of simple (step) functions on $\Xx$  and  $\Ff_b(\Xx)$ denotes   the vector space  of  measurable bounded  functions on $\Xx$.  $\Ff_b (\Xx)$ is  a Banach  space  equipped with  the sup-norm $\| \cdot \|_\infty$.
 
 $\bullet$ We endow  $ \Ss (\Xx)$  with    the   $\sigma$-algebra $\Sigma_w$,  which is the smallest  $\sigma$-algebra such that  for any $f \in \Ff_s (\Xx)$,  the map 
 $$I_f: \Ss (\Xx) \to \R, \mu \mapsto   \int _\Xx f \, d\mu , $$   is measurable.
The restriction of $\Sigma_w$ to $\Pp (\Xx)$  and $\Mm (\Xx)$    is   also denoted by $\Sigma_w$ \cite{Lawvere1962},  \cite{JLT2021}.

$\bullet$   Let  $C_b(\Xx)$ be the space 
 of all bounded  continuous functions  on   a topological  space 	 $\Xx$. We denote by $\tau_w$ 
 the weakest topology on  $\Ss(\Xx)$  such that 
 for any  $f\in C_b (\Xx)$    the map $I_f: (\Ss(\Xx), \tau_w)\to \R$ is continuous. We also denote by
 $\tau_w$ the     restriction  of $\tau_w$  to $\Mm(\Xx)$ and $\Pp(\Xx)$.   
 If $\Xx$ is separable and metrizable then $(\Pp(\Xx), \tau_w)$ is separable and  metrizable  \cite[Theorem 3.1.4, p. 104]{Bogachev2018},  
 \cite[Theorem 6.2, p.43]{Parthasarathy1967},  and  the Borel   $\sigma$-algebra
 $\Bb (\tau_w)$ on $\Pp(\Xx)$ generated by $\tau_w$  coincides with $\Sigma_w$  \cite[Theorem 2.3]{GH89}.

 \begin{lemma}\label{lem:fsb}   For any  $h\in \Ff_b (\Xx)$  the  evaluation mapping $I_h :  (\Ss (\Xx), \Sigma_w) \to \R, \mu \mapsto  \int _\Xx h d\mu,$  is  a measurable mapping. Consequently,  $\Sigma_w$ is the smallest $\sigma$-algebra    such that  $I_h :  (\Ss (\Xx), \Sigma_w) \to \R$ is measurable  for any $h \in \Ff_b (\Xx)$.
 \end{lemma}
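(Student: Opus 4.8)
The plan is to reduce the statement for bounded measurable $h$ to the defining property of $\Sigma_w$ for simple functions, using the fact that every bounded measurable function is a uniform limit of simple functions together with the fact that a pointwise limit of measurable real-valued functions is measurable.

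First I would fix $h \in \Ff_b(\Xx)$ and recall that there exists a sequence $(h_n)_{n\in\N}$ in $\Ff_s(\Xx)$ with $\|h_n - h\|_\infty \to 0$; this is the standard approximation of a bounded measurable function by simple functions (e.g. truncating the range into finitely many dyadic levels). For each $n$, the map $I_{h_n} : (\Ss(\Xx),\Sigma_w) \to \R$ is measurable by the very definition of $\Sigma_w$. Next I would show that $I_{h_n} \to I_h$ pointwise on $\Ss(\Xx)$, in fact uniformly on norm-bounded subsets: for $\mu \in \Ss(\Xx)$,
\[
\bigl| I_{h_n}(\mu) - I_h(\mu) \bigr| = \left| \int_\Xx (h_n - h)\, d\mu \right| \le \|h_n - h\|_\infty \, \|\mu\|_{TV},
\]
which tends to $0$ as $n \to \infty$. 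Hence $I_h$ is the pointwise limit of a sequence of $\Sigma_w$-measurable functions, and therefore $I_h$ is $\Sigma_w$-measurable. This proves the first assertion.

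For the "consequently" part, let $\Sigma'$ denote the smallest $\sigma$-algebra on $\Ss(\Xx)$ making all the maps $I_h$, $h \in \Ff_b(\Xx)$, measurable. Since $\Ff_s(\Xx) \subset \Ff_b(\Xx)$, every $I_f$ with $f$ simple is $\Sigma'$-measurable, so by minimality of $\Sigma_w$ we get $\Sigma_w \subseteq \Sigma'$. Conversely, the first part shows that every $I_h$ with $h \in \Ff_b(\Xx)$ is $\Sigma_w$-measurable, so by minimality of $\Sigma'$ we get $\Sigma' \subseteq \Sigma_w$. Hence $\Sigma' = \Sigma_w$.

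There is essentially no serious obstacle here; the only point requiring a little care is making sure the approximation by simple functions is uniform (so that the bound by $\|h_n - h\|_\infty\,\|\mu\|_{TV}$ applies and the limit is genuinely pointwise on all of $\Ss(\Xx)$, including unbounded-norm measures), rather than merely pointwise or monotone, and then invoking the standard stability of measurability under pointwise limits of real-valued functions. One could alternatively phrase the argument via $I_h$ being a pointwise limit on each of the sets $\{\mu : \|\mu\|_{TV} \le k\}$ and patching, but the uniform estimate above makes this unnecessary.
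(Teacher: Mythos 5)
Your proof is correct and follows essentially the same route as the paper's: approximate $h$ uniformly by simple functions, note that each $I_{h_n}$ is $\Sigma_w$-measurable by definition, and pass to the pointwise limit. Your explicit bound $|I_{h_n}(\mu)-I_h(\mu)|\le \|h_n-h\|_\infty\,\|\mu\|_{TV}$ is in fact a slightly cleaner justification of the convergence step than the paper's (which uses a two-sided sandwich by simple functions but relies on the same uniform estimate for signed measures), and your two-inclusion argument for the ``consequently'' part fills in what the paper leaves as a one-line remark.
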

 \begin{proof} Let $h \in \Ff_b (\Xx)$. Then  there  exist    sequences   of   simple functions $\{ h_n ^\pm, n \in \N^+\}$   such that 
 	$$ h_n ^-  (x)\le h (x) \le   h^+ _n  (x) \text{  for all }   x \in \Xx \text{ and } \| h_n ^- - h_n ^+ \| _\infty  \le \frac{1}{n},$$
 	see, e.g., \cite[p. 66]{Chentsov72}.  It follows  that  for  any $\mu \in \Ss(\Xx)$ we have
 	\begin{equation}\label{eq:converg1}
 	\lim_{n \to \infty} \int _\Xx h^\pm_n d \mu  = \int _\Xx h d\mu.
 	\end{equation}
 	In other  words,  the  sequence of functions  $I_{ h^\pm _n}:  \Ss (\Yy)\to \R$ converges  to $I_h$ pointwise.  Hence, $I_h$  is also measurable. The last  assertion  follows since  $\Ff_s (\Xx) \subset \Ff_b (\Xx)$.
 \end{proof}

 \begin{lemma}\label{lem:scount} Assume that  $\Xx$ is  a   complete  separable metric  space. Let $\Aa: =\{A_i| i \in \N^+\}$  be the  collection of closed
 	balls   of rational  radius centered   at a countable dense   subset in $\Xx$. Then  for any $A \in \Sigma_\Xx$ and  $\mu \in \Ss(\Xx)$  we have
 	\begin{equation}\label{eq:tvar}
 	|\mu|  (A) = \sup \{|\mu |(A\cap  \cup_{i =1}^N  A_i):\, N \in  \N^+, \, A_i \in\Aa \}.
 	\end{equation}
 	
 \end{lemma}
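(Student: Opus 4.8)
The plan is to reduce \eqref{eq:tvar} to the continuity from below of the finite measure $|\mu|$. First I would record that the countable family $\Aa$ exhausts $\Xx$: for any $x \in \Xx$, density of the chosen countable set produces a center $c$ with $d(x,c) < 1$, and since $1$ is a rational radius the closed ball $\bar B(c,1)$ lies in $\Aa$ and contains $x$; hence $\bigcup_{i \in \N^+} A_i = \Xx$. (Only separability of $\Xx$ is used at this point; completeness is not actually needed for this lemma.)

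Next, fixing $A \in \Sigma_\Xx$ and $\mu \in \Ss(\Xx)$, I would set $B_N := A \cap \bigcup_{i=1}^N A_i$ for the enumeration $\Aa = \{A_i : i \in \N^+\}$. Then $(B_N)_{N \in \N^+}$ is an increasing sequence of measurable subsets of $A$ with $\bigcup_N B_N = A \cap \bigcup_i A_i = A$. Since $\mu$ is a finite signed measure, its Jordan decomposition produces a finite nonnegative measure $|\mu|$, so continuity from below gives $|\mu|(B_N) \uparrow |\mu|(A)$; in particular $\sup_N |\mu|(B_N) = |\mu|(A)$.

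It then remains to identify $\sup_N |\mu|(B_N)$ with the right-hand side of \eqref{eq:tvar}. On one hand, each $B_N$ has the admissible form $A \cap \bigcup_{i=1}^N A_i$ with $A_i \in \Aa$, so the right-hand side is $\ge \sup_N |\mu|(B_N) = |\mu|(A)$. On the other hand, any finite union $\bigcup_{i=1}^{N'} A_{j_i}$ of members of $\Aa$ is contained in $\bigcup_{i=1}^M A_i$ with $M := \max_i j_i$, so by monotonicity of $|\mu|$ every term appearing in the right-hand side is at most $|\mu|(B_M) \le |\mu|(A)$. Combining the two bounds yields \eqref{eq:tvar}.

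There is essentially no obstacle here: the only points requiring a word of care are that an integer such as $1$ counts as a rational radius (so that $\Aa$ genuinely covers $\Xx$), and that it is the finiteness of $\mu$ — equivalently of $|\mu|$ — that makes the Jordan decomposition and the passage to the limit legitimate.
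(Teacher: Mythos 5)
Your proof is correct, and it takes a genuinely different and more elementary route than the paper's. The paper argues via inner regularity: since $\Xx$ is Polish, every measure in $\Ss(\Xx)$ is Radon, so one finds a compact $K_\eps \subset A$ with $|\mu|(A\setminus K_\eps)\le\eps$ and then covers $K_\eps$ by finitely many balls from $\Aa$; completeness of $\Xx$ enters only to invoke the Radon property. You instead observe that $\Aa$ already covers all of $\Xx$ (the radius-$1$ balls centered at the dense set suffice), so the sets $B_N = A\cap\bigcup_{i=1}^N A_i$ increase to $A$ and the identity reduces to continuity from below of the nonnegative measure $|\mu|$, with monotonicity supplying the reverse inequality. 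This yields a shorter argument that needs no regularity theory and, as you note, shows the completeness hypothesis is superfluous for this particular lemma. One small quibble: continuity from below holds for arbitrary nonnegative measures, so finiteness of $|\mu|$ is not what legitimizes the passage to the limit; it is only needed so that the Jordan decomposition and $|\mu|$ are defined at all, which is automatic for $\mu\in\Ss(\Xx)$.
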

 \begin{proof}
 	Since $\Xx$ is    a complete  metric space,  $\mu$ is   a Radon measure, see, e.g., \cite[Theorem 7.4.3, p. 85,  vol. 2]{Bogachev2007}, i.e.,  for any $\eps > 0$  there  exists     a compact set  $K_\eps \subset A$  such  that
 	\begin{equation}\label{eq:radon}
 	|\mu| (A \setminus K_\eps) \le  \eps.
 	\end{equation}
 	Let $\{ A_1, \ldots, A_n\in \Aa\}$    be a  finite  cover  of $K_\eps$.  Then it follows  from \eqref{eq:radon}
 	\begin{equation}\label{eq:gesup}
 	|\mu|  (A) \le \sup \{| \mu| (A\cap  \cup_{i =1}^N A_i):\,  N \in  \N^+, A_i \in \Aa\} + \eps. 
 	\end{equation}
 	Taking into account
 	$$ |\mu|  (A) \ge \sup \{| \mu| (A\cap  \cup_{i =1}^N A_i):\,  N \in  \N^+, A_i \in \Aa\}, $$
 	this   completes  the  proof of Lemma \ref{lem:scount}.
 \end{proof}
 
 \begin{corollary}\label{cor:scount} Let $\Ff_\Aa$ be  the countable family   consisting of  all finite  disjoint unions  of   elements in the countable  algebra $G(\Aa)$ generated by $\Aa$.  Then  we have
 	\begin{equation}\label{eq:scount}
 	|\mu| (A) = \sup \{\sum_{i =1}^N |\mu (A \cap B_i)|: \, \dot\cup_{i =1} ^N B_i \in \Ff_\Aa,\, B_i \in G(\Aa)\}.
 	\end{equation}	
 \end{corollary}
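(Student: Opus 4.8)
The plan is to prove the two inequalities in \eqref{eq:scount} separately. The inequality ``$\ge$'' is immediate from the definition of total variation: if $B_1,\dots,B_N\in G(\Aa)$ are pairwise disjoint, so that $\dot\cup_{i=1}^N B_i\in\Ff_\Aa$, then $A\cap B_1,\dots,A\cap B_N$ are pairwise disjoint measurable subsets of $A$; using $|\mu(E)|\le|\mu|(E)$, the additivity of the finite measure $|\mu|$ on disjoint sets, and monotonicity,
$$\sum_{i=1}^N|\mu(A\cap B_i)|\ \le\ \sum_{i=1}^N|\mu|(A\cap B_i)\ =\ |\mu|\Big(A\cap\bigcup_{i=1}^N B_i\Big)\ \le\ |\mu|(A),$$
and taking the supremum over all such $B_1,\dots,B_N$ yields ``$\ge$''.

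For ``$\le$'' I would first note that $G(\Aa)$ is an algebra generating $\Sigma_\Xx$: since $\Aa$ consists of closed balls of rational radius centered at a countable dense subset of the separable metric space $\Xx$, every open subset of $\Xx$ is a countable union of members of $\Aa$, so $\sigma(\Aa)=\Bb(\Xx)=\Sigma_\Xx$ and hence $\sigma(G(\Aa))=\Sigma_\Xx$. Consequently the standard approximation theorem applies to the finite measure $|\mu|$: for every $E\in\Sigma_\Xx$ and every $\eta>0$ there is $G\in G(\Aa)$ with $|\mu|(E\triangle G)<\eta$; indeed the family of sets admitting such approximations is a monotone class containing the algebra $G(\Aa)$, so by the monotone class theorem it equals $\Sigma_\Xx$.

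Now fix $\eps>0$ and let $\Xx=P\,\dot\cup\,M$ be a Hahn decomposition for $\mu$, so that $\mu(A\cap P)=\mu^+(A)\ge0$, $\mu(A\cap M)=-\mu^-(A)\le0$, and $|\mu|(A)=\mu(A\cap P)-\mu(A\cap M)$. Choose $G_1,G_2\in G(\Aa)$ with $|\mu|\big((A\cap P)\triangle G_1\big)<\eps/4$ and $|\mu|\big((A\cap M)\triangle G_2\big)<\eps/4$. Since $A\cap P$ and $A\cap M$ are disjoint, $G_1\cap G_2\subseteq\big((A\cap P)\triangle G_1\big)\cup\big((A\cap M)\triangle G_2\big)$, so $|\mu|(G_1\cap G_2)<\eps/2$. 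Put $B_1:=G_1\setminus G_2$ and $B_2:=G_2$; these are disjoint elements of $G(\Aa)$, hence $B_1\,\dot\cup\,B_2\in\Ff_\Aa$. Using $|\mu(E)-\mu(F)|\le|\mu|(E\triangle F)$ together with the inclusions $(A\cap B_1)\triangle(A\cap P)\subseteq\big(G_1\triangle(A\cap P)\big)\cup(G_1\cap G_2)$ and $(A\cap B_2)\triangle(A\cap M)\subseteq G_2\triangle(A\cap M)$, one gets $|\mu(A\cap B_1)|\ge\mu(A\cap P)-\tfrac{3\eps}{4}$ and $|\mu(A\cap B_2)|\ge-\mu(A\cap M)-\tfrac{\eps}{4}$, and therefore
$$\sum_{i=1}^2|\mu(A\cap B_i)|\ >\ \mu(A\cap P)-\mu(A\cap M)-\eps\ =\ |\mu|(A)-\eps.$$
Letting $\eps\downarrow0$ gives ``$\le$'' and completes the proof.

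The only genuinely measure-theoretic input, and the step I would handle most carefully, is the approximation of the Hahn pieces $A\cap P$ and $A\cap M$ by members of the countable algebra $G(\Aa)$; the remainder is bookkeeping with symmetric differences and the inequality $|\mu(E)-\mu(F)|\le|\mu|(E\triangle F)$. If one prefers to avoid the Hahn decomposition, the same conclusion follows by taking a near-optimal finite measurable partition of $A$ in the definition of $|\mu|(A)$, approximating each cell by an element of $G(\Aa)$, and then disjointifying; that route works but forces the approximation error to scale with the number of cells, so using the Hahn decomposition is cleaner.
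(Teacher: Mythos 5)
Your proof is correct. Both you and the paper reduce the ``$\le$'' inequality to the Hahn decomposition $\Xx=\Xx^+\,\dot\cup\,\Xx^-$, but the approximation step is genuinely different: the paper applies Lemma \ref{lem:scount} to the pieces $A^\pm=\Xx^\pm\cap A$, and that lemma rests on $|\mu|$ being a Radon measure on the \emph{complete} separable metric space $\Xx$ (inner regularity by compact sets, then a finite cover by balls of $\Aa$), whereas you invoke the purely measure-theoretic approximation of Borel sets in $|\mu|$-measure by elements of the generating algebra $G(\Aa)$, after observing that $\sigma(G(\Aa))=\Bb(\Xx)$ follows from separability alone. Your route buys independence from the completeness of $\Xx$ and from Lemma \ref{lem:scount}, and it spells out the disjointification ($B_1=G_1\setminus G_2$, $B_2=G_2$) and the symmetric-difference bookkeeping that the paper compresses into the single phrase ``applying Lemma \ref{lem:scount} to $\mu, A^\pm$'' when it passes from $|\mu|(A^\pm)$ to a sum of the form $\sum_i|\mu(A\cap B_i)|$ with $B_i\in G(\Aa)$; what it gives up is the reuse of the lemma the paper proved expressly to feed into this corollary. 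The individual steps all check out: the inclusion $G_1\cap G_2\subseteq\big((A\cap P)\triangle G_1\big)\cup\big((A\cap M)\triangle G_2\big)$, the inequality $|\mu(E)-\mu(F)|\le|\mu|(E\triangle F)$, the identity $\mu(A\cap P)-\mu(A\cap M)=|\mu|(A)$, and the fact that every open set in a separable metric space is a countable union of closed balls of rational radius centered in a countable dense set.
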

 
 \begin{proof}   Since $\dot\cup_{i =1} ^N B_i$ is a disjoint union, we have
 	\begin{equation}\label{eq:tvar1}
 	|\mu|  (A) \ge  \sup \{ \sum_{i =1}^N |\mu (A \cap B_i)|: \, \dot\cup_{i =1} ^N B_i \in \Ff_\Aa,\, B_i \in G(\Aa)\}.  
 	\end{equation}
 	Now let $\Xx = \Xx^+ \cup \Xx^-$  be the  Hahn decomposition   of $\Xx$ for  $\mu$  and $\mu  = \mu^+ - \mu^-$ be the Jordan-Hahn decomposition  of $\mu$. Then  
 	\begin{equation}\label{eq:ajh}
 	|\mu| (A) =  |\mu( A^+)| + |\mu  ( A^-)|, \text{ where } A^\pm: = \Xx^\pm \cap A.
 	\end{equation}
 	Applying Lemma \ref{lem:scount}  to  $\mu, A ^\pm$ we obtain  from \eqref{eq:ajh}
 	\begin{equation}\label{eq:tvar2}
 	|\mu|  (A) \le  \sup \{  \sum_{i =1}^N|\mu (A \cap B_i)|: \,  \dot\cup_{i =1}^N B_i \in \Ff_\Aa,\, B_i \in G(\Aa)\}.  
 	\end{equation}
 	Taking into account\eqref{eq:tvar1}, this  completes  the  proof of Corollary \ref{cor:scount}.
 \end{proof}

\begin{proposition}[Measurability of the Jordan-Hahn Decomposition and Total Variation]\label{prop:JH}
	
		Assume that $\Xx$ is a Polish space.
 The  map  ${\rm v}:(\Ss (\Xx),\Sigma_w) \to (\Mm (\Xx),\Sigma_w), \, \mu \mapsto | \mu|,$ is a measurable function. Consequently, the  Jordan-Hahn  decomposition   $\Ss(\Xx) \to \Mm(\Xx)\times \Mm(\Xx),  \mu \mapsto (\mu^+, \mu^-) ,$ is also measurable mapping. Additionally, the map  $\Ss(\Xx) \to \R, \, \mu \mapsto  \| \mu\|_{TV},$  is  measurable.
\end{proposition}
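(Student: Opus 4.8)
The plan is to reduce everything to the measurability of the single real-valued map $\mu \mapsto |\mu|(A)$ for each fixed $A \in \Sigma_\Xx$, and then to invoke Corollary \ref{cor:scount} in order to express this map as a \emph{countable} supremum of manifestly $\Sigma_w$-measurable functions. The heavy lifting has already been done in Lemma \ref{lem:scount} and Corollary \ref{cor:scount}; the proof is essentially a bookkeeping assembly of those facts together with the definition of $\Sigma_w$.

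First I would record an elementary reformulation of $\Sigma_w$-measurability. Since every simple function is a finite linear combination of characteristic functions $1_A$, $A \in \Sigma_\Xx$, and $I_{1_A}$ is the evaluation $\mu \mapsto \mu(A)$, the $\sigma$-algebra $\Sigma_w$ on $\Ss(\Xx)$ — and, by restriction, on $\Mm(\Xx)$ — is also the smallest $\sigma$-algebra making all the maps $\mu \mapsto \mu(A)$, $A \in \Sigma_\Xx$, measurable. Hence a map $T$ from a measurable space into $(\Ss(\Xx),\Sigma_w)$, or into $(\Mm(\Xx),\Sigma_w)$ since the latter $\sigma$-algebra is the trace of the former on $\Mm(\Xx)$, is measurable if and only if $\mu \mapsto (T\mu)(A)$ is measurable for every $A \in \Sigma_\Xx$. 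In particular, to prove that ${\rm v}$ is measurable it suffices to show that $\mu \mapsto |\mu|(A)$ is $\Sigma_w$-measurable for each fixed $A \in \Sigma_\Xx$.

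Next, fix a complete metric on the Polish space $\Xx$ and let $\Aa$, $G(\Aa)$, $\Ff_\Aa$ be as in Lemma \ref{lem:scount} and Corollary \ref{cor:scount}; all three families are countable. Fix $A \in \Sigma_\Xx$. For each finite disjoint family $\dot\cup_{i=1}^N B_i \in \Ff_\Aa$ with $B_i \in G(\Aa)$, the function
$$\mu \longmapsto \sum_{i=1}^N |\mu(A \cap B_i)|$$
is $\Sigma_w$-measurable: each $A \cap B_i$ lies in $\Sigma_\Xx$, so $\mu \mapsto \mu(A \cap B_i) = I_{1_{A\cap B_i}}(\mu)$ is $\Sigma_w$-measurable by the very definition of $\Sigma_w$, the absolute value is continuous, and a finite sum of measurable functions is measurable. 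By Corollary \ref{cor:scount}, $|\mu|(A)$ is the supremum of these functions over the countable index set $\{(B_1,\ldots,B_N):\ \dot\cup_{i=1}^N B_i \in \Ff_\Aa,\ B_i \in G(\Aa)\}$, and a countable supremum of real measurable functions is measurable. Hence $\mu \mapsto |\mu|(A)$ is $\Sigma_w$-measurable for every $A$, so by the first paragraph ${\rm v}\colon (\Ss(\Xx),\Sigma_w) \to (\Mm(\Xx),\Sigma_w)$ is measurable.

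The two consequences then follow formally. From $\mu^\pm = \tfrac{1}{2}(|\mu| \pm \mu)$ we get, for each $A \in \Sigma_\Xx$, that $\mu \mapsto \mu^\pm(A) = \tfrac{1}{2}\bigl(|\mu|(A) \pm \mu(A)\bigr)$ is $\Sigma_w$-measurable, being a linear combination of the measurable maps $\mu \mapsto |\mu|(A)$ and $\mu \mapsto \mu(A)$; by the criterion above $\mu \mapsto \mu^\pm$ is measurable, hence so is $\mu \mapsto (\mu^+,\mu^-)$ into the product $\sigma$-algebra. Likewise, taking $A = \Xx$ shows that $\mu \mapsto \|\mu\|_{TV} = |\mu|(\Xx)$ is measurable (equivalently, it is the composition of the measurable map ${\rm v}$ with the $\Sigma_w$-measurable map $\nu \mapsto \nu(\Xx) = I_{1_\Xx}(\nu)$ on $\Mm(\Xx)$). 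The only genuinely delicate step is the passage, via Corollary \ref{cor:scount} and the Radon property of finite measures on Polish spaces, from the usual description of $|\mu|(A)$ as a supremum over \emph{arbitrary} finite measurable partitions of $A$ — an a priori uncountable supremum, which yields no measurability — to a supremum over one fixed countable family; everything else is routine.
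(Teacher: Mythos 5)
Your proof is correct and follows essentially the same route as the paper's: reduce measurability of ${\rm v}$ to measurability of $\mu \mapsto |\mu|(A)$ for each fixed $A$, invoke Corollary \ref{cor:scount} to write this as a countable supremum of manifestly $\Sigma_w$-measurable functions, and deduce the two consequences formally. You spell out the ``consequently'' assertions (via $\mu^\pm = \tfrac{1}{2}(|\mu|\pm\mu)$ and $\|\mu\|_{TV}=|\mu|(\Xx)$) in more detail than the paper does, but the argument is the same.
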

\begin{proof}
By the  definition of $\Sigma_w$, to prove  that  the map ${\rm v}: (\Ss(\Xx),\Sigma_w) \to \Mm(\Xx), \mu \mapsto |\mu|,$ is  measurable, it suffices  to show that for   any $A \in\Sigma_\Xx$  the    composition:  $I_{1_A}\circ  {\rm v}: \Ss (\Xx) \to  \R_{\ge 0}$  is  a measurable function.
Let $ \Ff_\Aa : = \{ \Ff_1, \ldots, \Ff_n, \ldots\}$. By Corollary \ref{cor:scount}, we have
	\begin{equation}\label{eq:vn}
	I_{1_A}\circ  {\rm v} (\mu) = \sup_{ i }  f^A_i (\mu),
	\end{equation} 
	where
	$$ f^A _i (\mu)  = \sum_{B_{ij} \in\Ff_i} | \mu  ( A\cap B_{ij})|: \, \dot\cup_{j =1}^n B_{ij}= \Ff_i \in \Ff_\Aa,\, B_i \in G(\Aa).$$
Since $\Ff_i$ is fixed,   taking into account  that  the function $ \R \to \R_{\ge 0}, x \mapsto |x|$, is measurable,    the      function $ f^A_i: \Ss(\Xx) \to \R, \, \mu \mapsto   f^A_i  (\mu), $ is also measurable  for  all $i$.
	Since  for any $-\infty < a   < \infty $  we have
	$$ (\sup_i f^ A_i)^{-1} \big ((-\infty, b] \big) = \bigcap_{k = 1}^\infty\bigcup_{ i =1} ^\infty (f^A_i)^{-1} (-\infty,b + \frac{1}{k}], $$
	taking into account  \eqref{eq:vn} and   the fact  that  $\Bb(\R)$ is generated by the     sets  $\{(-\infty, b],\, b \in \R\}$,  we conclude that  the function $I_{1_A}\circ {\rm v}: \Ss(\Xx) \to \R$  is  a measurable.
This   completes     the  proof of the first aseertion of  Proposition \ref{prop:JH}.
	 
	The second  and third
	assertion  of Proposition \ref{prop:JH}  follow  immediately.	
\end{proof}

\subsection{Bounded s-probabilistic morphisms and their joints}\label{subs:probm}
Following Lawvere \cite{Lawvere1962}, we consider the $\sigma$-algebra $\Sigma_w$ on $\Pp (\Yy)$ and treat
   Markov kernels  from $\Xx$ to $\Yy$ as   measurable mappings from  $\Xx$ to $\Pp(\Yy)$.

\begin{definition}\label{def:probm} cf. \cite[Definition 1]{JLT2021}, cf. \cite{Lawvere1962}. Let  $\Xx, \Yy$  be measurable space.  A {\it  bounded s-probabilistic  morphism} $T: \Xx \leadsto \Yy$  is a measurable  mapping $\overline T: \Xx \to (\Ss (\Yy), \Sigma_w)$  such that  the mapping  $\overline T: \Xx \to \Ss (\Yy)_{TV}$ is  bounded, i.e.   there exists a  constant $C > 0$ such  that $\|\overline  T(x)\|_{TV} \le C$  for all $ x\in \Xx$. In this case we shall say that $\overline T: \Xx \to \Ss(\Yy)_{TV}$ is a {\it bounded  measurable  map}. A  bounded s-probabilistic morphism $T: \Xx\leadsto \Yy$ shall  be  called  a {\it probabilistic morphism}, if $\overline T (\Xx) \subset \Pp (\Yy)$.
\end{definition}

\begin{remark}\label{rem:s-finite}  (1) The space  of all   bounded  s-probabilistic morphisms    is a normed  vector  space  with the   sup-norm
	$\| T\|_\infty : = \sup_{x\in \Xx}   \| \overline T(x)\|_{TV}$.
	
(2) A  kernel $k$  from $\Xx$ to $\Yy$ is function  $k: \Xx \times \Sigma_\Yy \to [0, \infty]$  such that   for any $x\in \Xx$  the function
$k(x, \cdot) : \Sigma_\Yy \to [0, \infty]$ is a measure and   for any $B \in \Sigma_\Yy$ the function $k(\cdot , B)  : \Xx \to [0, \infty]$ is measurable.  A kernel   $k$ is finite, if   there is a finite $r \in [0, \infty)$  such that   $k (x, \Yy)  < r $  for all $x \in \Xx$.  Clearly any   finite  kernel  $k$  generates  a     bounded  measurable  mapping  $k: \Xx \to (\Mm (\Xx), \Sigma_w), x \mapsto k (x, \cdot),$  and hence  a bounded  s-probabilistic  morphism.    Proposition  \ref{prop:JH}  implies that  if    $\Yy$  is a Polish space, then any   bounded   measurable  mapping $\overline  T: \Xx \to \Ss (\Yy)$    can be  written as  $T =  T^+  - T^-$, where
	$T^\pm : \Xx \to \Mm (\Yy)$ are  finite  kernels.
	  
(3) An s-finite   kernel is  a  map $k:  \Xx \times \Sigma_\Yy \to [0, \infty]$  such that
there  exists  a  sequence  $k_1, \ldots, k_n$ of  finite kernels  such   that 
$k= \sum_{i =1}^\infty  k_i$ \cite[Definition 2]{Staton2017}.  The space  of all  s-finite  kernels  is a cone.  According to  Staton, the  concept of an s-finite  kernel  has been  introduced by Kallenberg \cite{Kallenberg2014} and by Last-Penrose \cite{LP16}.
\end{remark}

Let $\mathbf {sbProbm} (\Xx, \Yy)$ denote the   vector  space of all  bounded s-probabilistic  morphisms   from $\Xx$  to  $\Yy$ and  by $\mathbf{Probm}(\Xx, \Yy)$ the set  of all probabilistic morphisms from  $\Xx$ to  $\Yy$.

For a bounded s-probabilistic  morphism $T: \Xx \leadsto \Yy$, we denote the associated    measurable mapping  as $\overline T:\Xx \to \Ss (\Yy)$  .  Conversely, for a  bounded measurable  mapping $f: \Xx \to \Ss (\Yy)$ we denote  the generated  bounded s-probabilistic  morphism as $\underline{f}: \Xx \leadsto  \Yy$.

\begin{examples}\label{ex:probm} (i)
Any   regular      conditional probability measure  $\mu_{\Yy|\Xx}$ for $\mu \in \Pp (\Xx \times \Yy)$ with respect  to  the projection $\Pi_\Xx: \Xx\times \Yy \to \Xx$   assigns  a  measurable  mapping $\mu_{\Yy| \Xx}: \Xx  \to \Pp (\Yy)$ by the formula  $\mu_{\Yy|\Xx}  (x) (A) = \mu _{\Yy| \Xx} (A|x)$ for  $x \in \Xx$ and $A \in \Sigma_\Yy$. We shall also call  the map $\mu_{\Yy|\Xx}:\Xx \to \Pp(\Yy)$ a regular conditonal probability measure  for $\mu$. The associated  probabilistic morphism is denoted  by $\underline{\mu_{\Yy|\Xx}}: \Xx \leadsto \Yy$.

(ii)  The    map $\delta: \Xx \to (\Pp (\Xx),\Sigma_w),  x \mapsto \delta (x) : = \delta_x$,  is measurable  \cite{Lawvere1962},  \cite[\S 1.2]{Giry1982}. If $\Xx$  is a topological space, then the map
$\delta: \Xx \to (\Pp(\Xx), \tau_w)$  is continuous, since  the composition $I_f \circ \delta: \Xx \to \R$ is continuous  for any  $f \in C_b (\Xx)$.
  We  regard   a measurable  mapping $\kappa:\Xx \to \Yy$ as a deterministic probabilistic  morphism defined  by   $\overline\kappa: = \delta \circ \kappa: \Xx \to \Pp(\Yy)$.
In particular,   the identity  mapping   $ \Id_\Xx:\Xx\to\Xx$  of a measurable   space $\Xx$   is a probabilistic morphism generated by   $ \delta:  \Xx  \to \Pp(\Xx)$, so  $\delta = \overline{\Id_\Xx}$. 
Graphically speaking,  any straight  arrow (a  measurable mapping)  $\kappa: \Xx \to \Yy$ between  measurable  spaces  can be  seen as  a curved arrow (a probabilistic  morphism).
\end{examples}

Given a  bounded s-probabilistic  morphism, following  Chentsov \cite[Lemma 5.9, p. 72]{Chentsov72},  we define a  linear map $S_* (T): S(\Xx) \to \Ss(\Yy)$ as follows

\begin{equation}\label{eq:markov1S}
S_* (T) (\mu)  (B): =\int _\Xx \overline  T (x) (B)d\mu (x).
\end{equation}
for any $\mu \in \Ss(\Xx)$ and $B \in \Sigma_\Yy$.
Following   \cite[(5.1), p. 66]{Chentsov72},   we define   a linear map  $T^*:\Ff_b (\Yy) \to \R^\Xx$  by letting
\begin{equation}\label{eq:markov*}
 T^* (f) (x):  = \int _\Yy f d\overline  T(x)  \text{ for }  x \in \Xx.
 \end{equation}

\begin{lemma}\label{lem:boundedf}   The  map  $T^*$ is a bounded  linear  map between Banach spaces $\Ff_b (\Yy)_\infty$  and   $\Ff_b(\Xx)_\infty$.
\end{lemma}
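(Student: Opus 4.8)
The plan is to read the displayed operator in the statement as the map $T^*\colon \Ff_b(\Yy)\to\R^\Xx$ introduced just above it (so the content of the lemma is that $T^*$ in fact takes values in $\Ff_b(\Xx)$ and is a bounded linear operator between the two sup-norm Banach spaces $\Ff_b(\Yy)_\infty$ and $\Ff_b(\Xx)_\infty$). Accordingly I would verify three things in turn: linearity, the norm bound, and -- the only substantive point -- measurability of $T^*(f)$.

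Linearity is immediate: for fixed $x$, the assignment $g\mapsto\int_\Yy g\,d\overline T(x)$ is linear in $g$ against the fixed signed measure $\overline T(x)$, and this holds pointwise in $x\in\Xx$. For the norm estimate, fix $f\in\Ff_b(\Yy)$; since $\overline T(x)\in\Ss(\Yy)$ is a finite signed measure, the elementary bound $\bigl|\int_\Yy g\,d\nu\bigr|\le\|g\|_\infty\,\|\nu\|_{TV}$ gives $|T^*(f)(x)|\le\|f\|_\infty\,\|\overline T(x)\|_{TV}\le\|T\|_\infty\,\|f\|_\infty$ for every $x\in\Xx$, where $\|T\|_\infty$ is the boundedness constant supplied by Definition \ref{def:probm} (cf. Remark \ref{rem:s-finite}(1)). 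Taking the supremum over $x\in\Xx$ shows simultaneously that $T^*(f)$ is a bounded function and that $\|T^*\|_{\mathrm{op}}\le\|T\|_\infty$.

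The measurability of $T^*(f)$ is where the hypotheses are used in an essential way. I would observe that $T^*(f)=I_f\circ\overline T$, where $I_f\colon\Ss(\Yy)\to\R$ is the evaluation map $\nu\mapsto\int_\Yy f\,d\nu$. By Lemma \ref{lem:fsb}, $I_f$ is measurable with respect to $\Sigma_w$ for every $f\in\Ff_b(\Yy)$ (this is precisely what that lemma was proved for), and $\overline T\colon\Xx\to(\Ss(\Yy),\Sigma_w)$ is measurable by the very definition of a bounded s-probabilistic morphism. Hence the composite $I_f\circ\overline T\colon\Xx\to\R$ is measurable, so $T^*(f)\in\Ff_b(\Xx)$; combining this with the previous paragraph completes the proof.

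I do not anticipate a genuine obstacle: the mathematical content is carried entirely by Lemma \ref{lem:fsb} (for measurability) together with the standard total-variation estimate (for boundedness). The single thing to get right is the identification $T^*(f)=I_f\circ\overline T$, which is exactly what makes Lemma \ref{lem:fsb} applicable; everything else is routine.
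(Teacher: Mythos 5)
Your reading of the statement is the right one: the displayed operator must be $T^*$ rather than $S_*(T)$ (the latter acts on measures, not on bounded functions), and your proof of the corrected statement is valid. It does, however, take a more economical route than the paper. The paper's own proof does not invoke Lemma \ref{lem:fsb}; instead it re-runs the underlying argument from scratch: it first checks $T^*(1_B)=\overline T(\cdot)(B)\in\Ff_b(\Xx)$, then splits $\overline T(x)=\overline T(x)^+-\overline T(x)^-$ via the Jordan--Hahn decomposition and sandwiches a general $h\in\Ff_b(\Yy)$ between simple functions $h_n^\pm$ with $\|h_n^+-h_n^-\|_\infty\le 1/n$, concluding that $T^*h$ is a pointwise limit of the measurable functions $T^*(h_n^\pm)$. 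Your observation that $T^*(f)=I_f\circ\overline T$, combined with the $\Sigma_w$-measurability of $I_f$ for $f\in\Ff_b(\Yy)$ (which is exactly Lemma \ref{lem:fsb}) and the measurability of $\overline T$ built into Definition \ref{def:probm}, collapses all of this into a composition of two measurable maps; it also dispenses with the Jordan--Hahn step entirely. The trade-off is purely organizational: the paper's version is self-contained and mirrors Chentsov's original argument, while yours makes visible that Lemma \ref{lem:fsb} is the actual engine and avoids duplicating its proof. Your norm estimate $|T^*(f)(x)|\le\|f\|_\infty\,\|\overline T(x)\|_{TV}\le\|T\|_\infty\,\|f\|_\infty$ matches the paper's concluding inequality.
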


\begin{proof}   We  shall use the   Chentsov   argument  in his  proof   for the case    that  $T \in  \mathbf{Probm} (\Xx, \Yy)$  \cite[Corollary of Lemma 5.1, p. 66]{Chentsov72}.   First  we   shall show that  $T^* (\Ff_b (\Yy)) \subset \Ff_b (\Xx)$.  Note  that  $T^* (1_B) (\cdot ) = \overline T (\cdot )(B) \in \Ff_b (\Xx)$,  since $\overline T(x) (B) \le  \| T (x)\|_{TV}$.    Now  let   $h \in  \Ff_b (\Yy)$.    Then there  exist  sequence  of simple  functions   $\{ h_n ^\pm , n \in  \N^+\}$ on $\Yy$ such that
\begin{equation}\label{eq:hn}
	h_n ^- (y)\le  h (y) \le h^+_n (y)     \text{ for all } y \in \Yy \text { and } \| h_n ^- - h_n ^+\|_\infty \le \frac{1}{n}.
\end{equation}
Let $x \in \Xx$. Then   
$$ T^* h (x) =  \int _\Yy h\, d  \overline {T}(x)^+  - \int _\Yy h\,  \overline  T(x)^-.$$

For  $x \in \Xx$ and $f \in \Ff_b (\Yy)$, we  shall use  the  following shorthand notation  
$$ (T^\pm) ^* (f) (x): = \int _\Yy f \, d  \overline {T}(x)^\pm.$$
 By \eqref{eq:hn}, for any $x \in  \Xx$ and any $n\in \N^+$,   we obtain
 \begin{equation}\label{eq:est0}
  (T ^\pm)^* (h_n^-) (x) \le   (T ^\pm)^* (h) (x) \le    (T ^\pm)^* (h_n ^+)(x). 
 \end{equation}
 From \eqref{eq:hn} and \eqref{eq:est0}, it follows  that  for any $x\in \Xx$     
 \begin{equation*}
 \lim_{n \to \infty}   (T^\pm)^* (h_n^-) (x) = (T^\pm)^* h(x) = \lim_{n \to \infty} (T^\pm)^* (h_n ^+) (x).
 \end{equation*}
 Hence, for  any $x \in \Xx$ we have
 $\lim_{n \to \infty} T^*h (x) =  \lim_{n \to \infty} T^* (h^\pm_n)$.
 It  follows  that $T^* h \in \Ff_b (\Xx)$, since we have shown  that $ T^* (h^\pm_n) \in \Ff_b (\Xx)$.  
 
 Finally,   we note   that  $\| T^*  f\|_\infty  \le  \| T\| _\infty \cdot  \| f \| _\infty$. Hence,  $T^*$ is a bounded  linear  map. This    completes  the  proof of Lemma \ref{lem:boundedf}.
 
\end{proof}

For $\overline T \in \mathbf{Meas} (\Xx, (\Ss(\Yy),\Sigma_w))$, $x\in \Xx$,  and $y\in \Yy$, we shall write $d\overline T (y|x)$  instead  of $d\overline T  (x) (y)$, which  has been  denoted as $T (dy|x)$ by Chentsov \cite{Chentsov72}.
\begin{definition}\label{def:comp} Given  $T_1 \in \mathbf{sbProbm} (\Xx, \Yy)$, $T_2 \in \mathbf{sbProbm} (\Yy, \Zz)$,  the  composition   $\overline {T_2 \circ T_1}: \Xx \to \Ss(\Zz)$ is defined as follows
\begin{equation}\label{eq:comp}
\overline{T_2 \circ T_1}  (x, C) = \int_\Yy  \overline T_2  (y, C) d\overline T_1  (y|x) 
\end{equation}
for  any $x\in \Xx$,  $ C \in \Sigma_\Zz$.
\end{definition}

\begin{lemma}\label{lem:compo} (1) The composition $T_2 \circ T_1$ of  two bounded s-probabilistic morphisms  is   a bounded   s-probabilistic  morphism. 
	
(2)	 Furthermore,  the composition is associative, i.e.  $(T_3 \circ T_2) \circ  T_1 = T_3 \circ  (T_2 \circ T_1)$.
	 
	(3) For any    $T_i \in \mathbf{sbProbm} (\Xx_i,\Xx_{i+1} )$,  $i \in [1, 2]$,  we have
	 \begin{equation}\label{eq:composti}
	 \overline{T_2 \circ T_1} = S_*(T_2) \circ \overline {T_1}.
	 \end{equation}
\end{lemma}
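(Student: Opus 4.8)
Throughout I write the morphisms in a chain as $T_i\colon\Xx_i\leadsto\Xx_{i+1}$. The plan is to reduce all three parts to a single Fubini‑type interchange identity for bounded s‑probabilistic morphisms: for $T\colon\Uu\leadsto\Vv$ in $\mathbf{sbProbm}(\Uu,\Vv)$, any $\mu\in\Ss(\Uu)$ and any $g\in\Ff_b(\Vv)$,
\begin{equation}\label{eq:fubinikernel}
\int_\Vv g\;d\bigl(S_*(T)(\mu)\bigr)=\int_\Uu T^*(g)\;d\mu ,\qquad\text{where }\ T^*(g)(u)=\int_\Vv g\;d\overline T(u).
\end{equation}
Both sides make sense: $S_*(T)(\mu)\in\Ss(\Vv)$ by definition \eqref{eq:markov1S} (finiteness and countable additivity being immediate from dominated convergence and $\|\overline T(u)\|_{TV}\le\|T\|_\infty$), and $T^*(g)\in\Ff_b(\Uu)$ by Lemma \ref{lem:boundedf} (see its proof).

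To prove \eqref{eq:fubinikernel} I would first verify it for $g=1_B$, $B\in\Sigma_\Vv$, where it is literally \eqref{eq:markov1S}, and extend by linearity to all simple $g\in\Ff_s(\Vv)$. For general $g\in\Ff_b(\Vv)$ I would sandwich $h_n^-\le g\le h_n^+$ by simple functions with $\|h_n^+-h_n^-\|_\infty\le 1/n$ (as in the proof of Lemma \ref{lem:fsb}) and pass to the limit in the identity for $h_n^\pm$: on the left by dominated convergence against $|S_*(T)(\mu)|$, and on the right using $\|T^*(h_n^\pm)\|_\infty\le(\|g\|_\infty+1)\|T\|_\infty$ together with dominated convergence against the finite measure $|\mu|$ (the pointwise convergence $T^*(h_n^\pm)(u)\to T^*(g)(u)$ being itself an instance of dominated convergence, against $|\overline T(u)|$). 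This approximation step, carried out in the signed, purely measurable (non‑topological) setting, is the only part that is not bookkeeping, and it is exactly where Lemma \ref{lem:boundedf} is indispensable.

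Given \eqref{eq:fubinikernel}, part (3) is just a rewriting of Definition \ref{def:comp}: for $x\in\Xx_1$ and $C\in\Sigma_{\Xx_3}$,
\[\overline{T_2\circ T_1}(x)(C)=\int_{\Xx_2}\overline{T_2}(y)(C)\,d\overline{T_1}(x)(y)=\bigl(S_*(T_2)(\overline{T_1}(x))\bigr)(C),\]
so $\overline{T_2\circ T_1}=S_*(T_2)\circ\overline{T_1}$, and in particular each $\overline{T_2\circ T_1}(x)$ lies in $\Ss(\Xx_3)$. For part (1): measurability of $\overline{T_2\circ T_1}$ follows from this factorization once I note that $S_*(T_2)\colon(\Ss(\Xx_2),\Sigma_w)\to(\Ss(\Xx_3),\Sigma_w)$ is measurable — which, by the definition of $\Sigma_w$, reduces to measurability of the maps $\mu\mapsto\bigl(S_*(T_2)(\mu)\bigr)(C)=I_{T_2^*(1_C)}(\mu)$, true by Lemma \ref{lem:fsb} since $T_2^*(1_C)\in\Ff_b(\Xx_2)$ — and $\overline{T_1}$ is measurable by hypothesis. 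Boundedness comes from the estimate, valid for every finite measurable partition $\{C_j\}_{j=1}^m$ of $\Xx_3$,
\begin{equation*}
\sum_{j=1}^{m}\bigl|\overline{T_2\circ T_1}(x)(C_j)\bigr|\le\int_{\Xx_2}\sum_{j=1}^{m}\bigl|\overline{T_2}(y)(C_j)\bigr|\,d\bigl|\overline{T_1}(x)\bigr|(y)\le\|T_2\|_\infty\,\bigl\|\overline{T_1}(x)\bigr\|_{TV}\le\|T_1\|_\infty\|T_2\|_\infty ,
\end{equation*}
so that passing to the supremum over partitions yields $T_2\circ T_1\in\mathbf{sbProbm}(\Xx_1,\Xx_3)$ with $\|T_2\circ T_1\|_\infty\le\|T_1\|_\infty\|T_2\|_\infty$.

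For associativity (part (2)) I would prove $S_*$ functorial, $S_*(T_2\circ T_1)=S_*(T_2)\circ S_*(T_1)$, and recall $S_*$ is injective (since $S_*(T)(\delta_x)=\overline T(x)$ recovers $T$); associativity of $\circ$ on morphisms then follows from associativity of composition of linear maps. Functoriality is the computation, for $\mu\in\Ss(\Xx_1)$, $C\in\Sigma_{\Xx_3}$ and $g:=T_2^*(1_C)\in\Ff_b(\Xx_2)$,
\begin{align*}
\bigl(S_*(T_2\circ T_1)(\mu)\bigr)(C)
&=\int_{\Xx_1}\bigl(S_*(T_2)(\overline{T_1}(x))\bigr)(C)\,d\mu(x)\\
&=\int_{\Xx_1}T_1^*(g)\;d\mu\\
&=\int_{\Xx_2}g\;d\bigl(S_*(T_1)(\mu)\bigr)\\
&=\bigl(S_*(T_2)(S_*(T_1)(\mu))\bigr)(C),
\end{align*}
where the first step uses part (3), the second that $\bigl(S_*(T_2)(\overline{T_1}(x))\bigr)(C)=T_1^*(g)(x)$, the third is \eqref{eq:fubinikernel} for $T_1$, and the last unwinds \eqref{eq:markov1S}. (Alternatively one evaluates both sides of $(T_3\circ T_2)\circ T_1=T_3\circ(T_2\circ T_1)$ at an arbitrary $(x,D)$, $D\in\Sigma_{\Xx_4}$, and checks that each equals $T_1^*\!\bigl(T_2^*\!\bigl(T_3^*(1_D)\bigr)\bigr)(x)$ via \eqref{eq:fubinikernel}.) The only genuine work is the approximation argument for \eqref{eq:fubinikernel}.
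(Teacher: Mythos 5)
Your proof is correct, and its mathematical core is the same as the paper's: part (3) is a definitional unwinding of \eqref{eq:comp} against \eqref{eq:markov1S}, measurability in part (1) reduces to $x\mapsto T_1^*\bigl(\overline{T_2}(\cdot)(C)\bigr)(x)$ being in $\Ff_b(\Xx_1)$ via Lemma \ref{lem:boundedf}, and the only real work in part (2) is a Fubini-type interchange established by sandwiching a bounded measurable integrand between simple functions — exactly the approximation the paper performs to verify \eqref{eq:comm}. Where you differ is in the packaging of associativity: the paper checks \eqref{eq:comm} directly for a fixed $D\in\Sigma_\Vv$, whereas you first prove the adjunction identity \eqref{eq:fubinikernel}, deduce functoriality $S_*(T_2\circ T_1)=S_*(T_2)\circ S_*(T_1)$, and then transport associativity from composition of linear maps via injectivity of $S_*$ (recovered from $S_*(T)(\delta_x)=\overline T(x)$). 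This buys a cleaner conceptual statement (functoriality of $S_*$, which the paper never isolates) at the cost of one extra layer of abstraction; it also makes Remark \ref{rem:push} and \eqref{eq:compost*} in Lemma \ref{lem:meast} immediate corollaries. You are also more careful than the paper on two points it merely asserts: that $S_*(T)(\mu)$ is genuinely a countably additive finite signed measure, and the quantitative bound $\|T_2\circ T_1\|_\infty\le\|T_1\|_\infty\|T_2\|_\infty$ obtained from the partition characterization of the total variation norm.
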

Lemma  \ref{lem:compo}  for  probabilistic morphisms are well-known  \cite{Giry1982}, see also  \cite[Lemmas 5.4-5.6, p. 68-69]{Chentsov72}.

\begin{proof}[Proof of Lemma \ref{lem:compo}] 	
	(1)    First  we note that $\overline {T_2 \circ T_1}$  is   bounded map   from $\Xx \to \Ss (\Zz)_{TV}$,  since  $\overline  T_1  $ and $\overline T_2$ are bounded  mappings.     To show that    $T_2 \circ  T_1: \Xx \to (\Ss (\Zz),\Sigma_w) $ is  a measurable map,   it suffices  to show that  for any   $C \in \Sigma_\Zz$ the function
	$$I_{1_C}: \Xx\to \R, x \mapsto  I_{1_C}  \big ( \overline {T_2 \circ T_1} (x)\big ) $$
	is measurable.     Since   
	$$  I_{1_C}\big ( \overline {T_2 \circ T_1} (x)\big ) = \int _\Yy  \overline T_2  (y, C)\,  d\overline T_1  (y|x)  =  (T_1 )^* (\overline T_2 (\cdot | C) )(x),$$
	by Lemma \ref{lem:boundedf}, the function $ I_{1_C}  \big ( \overline {T_2 \circ T_1} \big )$ belongs to $ \Ff _b (\Xx)$.  This proves the first assertion  of  Lemma \ref{lem:compo}.

	(2) Let  $T_1 \in \mathbf {sbProbm} (\Xx, \Yy)$, $T_2 \in \mathbf{sbProbm}(\Yy, \Zz)$ and  $T_3 \in \mathbf {sbProbm} (\Zz, \Vv)$. To prove Lemma \ref{lem:compo} (2), we  have to show that   for any $ x\in \Xx$  and $D \in \Sigma_\Vv$  we have:
	\begin{equation}\label{eq:comm}
	 \int_\Yy \int _\Zz \overline {T_3} (z)(D)\, d\overline{T_2} (z|y) d\overline{T_1} (y|x)  = \int _\Zz \int _\Yy  \overline {T_3} (z)(D)\,  d\overline{T_2} (z|y)d\overline{T_1} (y|x).
	\end{equation}
	In  order  to  prove \eqref{eq:comm}, we shall  approximate the bounded measurable  function $ \overline {T_3} (\cdot )(D) $ on $\Zz$ by  step  functions $h_n \in \Ff_s (\Zz)$. This reduces  to verify \eqref{eq:comm} for the case   that  $\overline {T_3} (\cdot)(D) =  1_C$ for some  $C \in \Sigma_\Zz$. In this case    straightforward  computations yield  
$$\int_\Yy \int _\Zz 1_C \overline{T_2} (y, dz) \overline{T_1} (x, dy) = 
\int_\Yy  \overline{T_2} (y)(C)  \overline {T_1} (x, dy) = \int _\Zz \int _\Yy  1_C  \overline{T_2} (y, dz)\overline{T_1} (x, dy) .$$
This completes  the proof  of  Lemma \ref{lem:compo}(2).

	(3) To prove the  last assertion of Lemma 
\ref{lem:compo},   we notice  that  $\overline {T_2 \circ T_1} (x)(C) = S_* (T_2) \circ \overline  {T_1} (x)(C) $  for any  $x \in \Xx_1$ and $C \in \Sigma_{\Xx_3}$  by  comparing \eqref{eq:comp} with \eqref{eq:markov1S}.  This  completes  the  proof of Lemma \ref{lem:compo}.
\end{proof}

\begin{remark}\label{rem:push} (1) If $T$ is a probabilistic morphism, then the restriction $M_*(T)$  
of $S_*(T)$ to  $\Mm(\Xx)$  and the  restriction $P_*(T)$  of $S_*(T)$  to  $\Pp (\Xx)$ maps  $\Mm(\Xx)$ to  $\Mm(\Yy)$ and $\Pp (\Xx)$ to
$\Pp(\Yy)$, respectively  \cite[Lemma 5.9, p. 72]{Chentsov72}.

(2)  If  a    probabilistic morphism      is deterministic, i.e.  it  is generated by
a measurable mapping  $\overline\kappa = \delta \circ \kappa: \Xx \to \Pp (\Yy)$,  then   $S_* (\kappa): \Ss(\Xx) \to \Ss(\Yy)$ is the   push-forward     operator  $\kappa_* : \Ss (\Xx) \to \Ss(\Yy)$, i.e.
\begin{equation}\label{eq:pf}
\Ss_* (\kappa) (\mu) (B) = \mu (\kappa ^{-1}(B)) \text{ for } \mu \in \Ss(\Xx) \text{ and } B \in \Sigma_\Yy.
\end{equation}

(3)  A  bounded measurable  mapping  $\overline T: \Xx \to (\Ss (\Yy),\Sigma_w)$  generates  two   homomorphisms:
$\overline  T_*: \Ss(\Xx) \to  \Ss (\Ss (\Yy), \Sigma_w)$ and   $T_*: \Ss(\Xx) \to \Ss(\Yy)$.
\end{remark}

In view  of Remark \ref{rem:push},  we  will use   the shorthand notation $T_*$  for $S_* (T)$, $M_* (T)$ and $P_* (T)$ when referring to  a  s-probabilistic  morphism $T$, as long as there is no possibility of confusion or misunderstanding.

\begin{lemma}\label{lem:meast}  Assume  that $T \in \mathbf{Probm} (\Xx, \Yy)$.
	
(1) Then $T_*:  (\Ss  (\Xx), \Sigma_w) \to  (\Ss (\Yy), \Sigma_w)$ is a measurable mapping.  In  particular,  the  map
$T_*: (\Pp  (\Xx), \Sigma_w) \to  (\Pp (\Yy), \Sigma_w)$  is  measurable.

(2) If $\overline  T \in C (\Xx, (\Pp (\Yy), \tau_w))$ then
$T_*:  (\Ss  (\Xx), \tau_w) \to  (\Ss (\Yy), \tau_w)$ is a continuous mapping.  In  particular,  the  map
$T_*: (\Mm  (\Xx), \tau_w) \to  (\Mm (\Yy), \tau_w)$  is  continuous.
\end{lemma}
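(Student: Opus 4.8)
The plan is to reduce both statements to the change-of-variables identity
\begin{equation}\label{eq:covplan}
I_f\circ T_* \;=\; I_{T^*(f)}\qquad\text{on }\Ss(\Xx),
\end{equation}
valid for every $f\in\Ff_b(\Yy)$, where $T^*$ is the transpose map defined just before Lemma \ref{lem:boundedf} (so that, by that lemma, $T^*$ carries $\Ff_b(\Yy)$ into $\Ff_b(\Xx)$ with $\|T^*g\|_\infty\le\|T\|_\infty\|g\|_\infty$). Granting \eqref{eq:covplan}, part~(1) follows at once: $\Sigma_w$ on $\Ss(\Yy)$ is generated by the maps $I_f$, $f\in\Ff_s(\Yy)$, so $T_*$ is $\Sigma_w$--$\Sigma_w$ measurable provided $I_f\circ T_*$ is $\Sigma_w$-measurable on $\Ss(\Xx)$ for each such $f$; and $I_f\circ T_*=I_{T^*(f)}$ with $T^*(f)\in\Ff_b(\Xx)$, which is $\Sigma_w$-measurable by Lemma \ref{lem:fsb}. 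For part~(2) (where $\Xx,\Yy$ are understood to be topological), $\tau_w$ on $\Ss(\Yy)$ is the initial topology for the family $\{I_f:f\in C_b(\Yy)\}$, so it suffices that each $I_f\circ T_*=I_{T^*(f)}$ be $\tau_w$-continuous on $\Ss(\Xx)$; but $T^*(f)(x)=\int_\Yy f\,d\overline T(x)=I_f(\overline T(x))$ exhibits $T^*(f)$ as the composite $I_f\circ\overline T$ of the hypothesised continuous map $\overline T\colon\Xx\to(\Pp(\Yy),\tau_w)$ with the $\tau_w$-continuous map $I_f$, hence $T^*(f)$ is continuous, and it is bounded since $\|\overline T(x)\|_{TV}=1$; thus $T^*(f)\in C_b(\Xx)$ and $I_{T^*(f)}$ is $\tau_w$-continuous on $\Ss(\Xx)$ by definition of $\tau_w$.

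To establish \eqref{eq:covplan} I would use the standard approximation ladder. For an indicator $f=1_B$, $B\in\Sigma_\Yy$, it is exactly the defining formula \eqref{eq:markov1S}, since $I_{1_B}(T_*\mu)=(T_*\mu)(B)=\int_\Xx\overline T(x)(B)\,d\mu(x)=\int_\Xx T^*(1_B)\,d\mu=I_{T^*(1_B)}(\mu)$. Linearity of $T_*$, of $T^*$ and of the integral extends this to $f\in\Ff_s(\Yy)$, which already suffices for part~(1). For general $f\in\Ff_b(\Yy)$ choose simple functions $h_n^-\le f\le h_n^+$ with $\|h_n^+-h_n^-\|_\infty\le n^{-1}$ as in the proof of Lemma \ref{lem:fsb}; then, for each fixed $\mu\in\Ss(\Xx)$, $|I_{h_n^\pm}(T_*\mu)-I_f(T_*\mu)|\le n^{-1}\|T_*\mu\|_{TV}$ and $|I_{T^*(h_n^\pm)}(\mu)-I_{T^*(f)}(\mu)|\le\|T^*(h_n^\pm-f)\|_\infty\,\|\mu\|_{TV}\le n^{-1}\|T\|_\infty\,\|\mu\|_{TV}$ by Lemma \ref{lem:boundedf}, so passing to the limit in the identity already proved for the $h_n^\pm$ yields \eqref{eq:covplan}.

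Finally, the ``in particular'' clauses are routine restriction arguments: a probabilistic morphism $T$ satisfies $T_*(\Pp(\Xx))\subseteq\Pp(\Yy)$ and $T_*(\Mm(\Xx))\subseteq\Mm(\Yy)$ by Remark \ref{rem:push}(1), and $\Sigma_w$ (resp.\ $\tau_w$) on $\Pp(\cdot)$ and $\Mm(\cdot)$ is by convention the trace $\sigma$-algebra (resp.\ subspace topology) inherited from $\Ss(\cdot)$, so the restriction of the measurable (resp.\ continuous) map $T_*$ stays measurable (resp.\ continuous). I do not foresee a genuine obstacle; the lemma is essentially bookkeeping once \eqref{eq:covplan} and Lemmas \ref{lem:fsb}, \ref{lem:boundedf} are in hand. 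The only mildly delicate point is keeping the signed-measure accounting honest in the approximation step, i.e.\ phrasing the $n^{-1}$-estimates in terms of the total-variation norms $\|T_*\mu\|_{TV}$ and $\|\mu\|_{TV}$ and invoking the operator bound $\|T^*g\|_\infty\le\|T\|_\infty\|g\|_\infty$ rather than arguing measure-wise.
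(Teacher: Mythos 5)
Your proposal is correct and follows essentially the same route as the paper: both proofs hinge on the identity $I_f\circ T_*=I_{T^*f}$ (the paper's equation \eqref{eq:compost*}), reduce measurability to Lemma \ref{lem:fsb} via $T^*f\in\Ff_b(\Xx)$ from Lemma \ref{lem:boundedf}, and for part (2) observe that $T^*f=I_f\circ\overline T$ is bounded continuous when $\overline T$ is $\tau_w$-continuous. Your write-up merely spells out the simple-function approximation behind the change-of-variables identity, which the paper leaves implicit.
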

\begin{proof} (1)  To prove the first assertion  of Lemma \ref{lem:meast}(1), it suffices  to show that  for any $f \in \Ff_b(\Yy)$ the  composition
$$I_f \circ   T_*: (\Ss(\Xx),\Sigma_\om) \to \R, \mu \mapsto I_f \circ T_* (\mu), $$
is a measurable  mapping. Note  that
$$I_f \circ T_* (\mu) = \int_\Yy  f d T_*\mu  \stackrel{\eqref{eq:markov1S}}{=} \int_\Xx \int_\Yy fd\overline T(x)\, d\mu (x)\stackrel{\eqref{eq:markov*}}{=} \int_\Xx T^* (f)\, d\mu. $$
Hence 
\begin{equation}\label{eq:compost*}
I_f \circ T_*  = I_{T^*f}.
\end{equation}  Using  Lemma \ref{lem:fsb}  we  conclude  the first assertion of Lemma \ref{lem:meast}(1) from  \eqref{eq:compost*}.  The second  assertion  of  Lemma \ref{lem:meast}(1)  follows immediately.

(2) The proof  of the second assertion  is similar to the  the proof of   the  first  one, noting that  if $\overline  T: \Xx \to (\Pp(\Yy), \tau_w)$ is continuous, then  for any $f \in C_b (\Yy)$ the function
$$ \Xx \to\R, x\mapsto  T^* f (x) =\int _\Yy f d\overline  T (x) =  (f  \overline  T (x)) (\Yy), $$
is continuous. Hence the composition
$$ I_f \circ  T_*:  (\Ss (\Xx), \tau _w)\to \R, \, \mu \mapsto  I_f \circ   T_* (\mu) = I_{ T^* f} (\mu),$$
is continuous.  This completes  the  proof  of Lemma \ref{lem:meast}.
\end{proof}

\begin{remark}\label{rem:meast} The   assertion ``in particular" of  Lemma \ref{lem:meast} (1)  has been  stated  by Lawvere \cite{Lawvere1962} and proved by Giry \cite{Giry1982}.
\end{remark}

Now we  are going  to define  the {\it joint   of two bounded s-probabilistic morphisms  with the same source}.  First  we need the following.

\begin{lemma}\label{lem:diag} (1) The
multiplication mapping
$$\mathfrak m: (\Ss(\Xx), \Sigma_w) \times (\Ss (\Yy), \Sigma_w) \to  \Ss (\Xx \times \Yy, \Sigma_w), \,  (\mu, \nu) \mapsto \mu \cdot \nu$$
is  measurable.  If $\Xx$ and $\Yy$ are topological  spaces,   then  $\mathfrak m$ is   $(\tau_w, \tau_w)$-continuous.
	
(2) The  diagonal  mapping 
$${\rm diag}:  (\Ss (\Xx), \Sigma_w) \to  (\Ss  (\Xx \times \Xx), \Sigma_w),\, \mu \mapsto   \mu ^2$$
 is   measurable.  If    $\Xx$ is  a  topological space, then ${\rm diag}$  is $(\tau_w, \tau_w)$-continuous.

 (3) For  any $n \in \N^+$ the addition $\mathfrak a ^n: (\Ss(\Xx) ^n, \otimes ^n \Sigma_w)\to  (\Ss(\Xx), \Sigma_w),\\
  (\mu_1, \ldots, \mu_n)
 \mapsto \sum_{i=1}^n \mu_i,$  is  measurable. Consequently  the  $n$-Dirac map 
 $$\delta ^n: \Xx^n \to (\Ss(\Xx), \Sigma_w), \, S \mapsto \mu_S,$$
 is also measurable.  If $\Xx$ is a  topological  space  then  $\delta ^n$ is $\tau_w$-continuous.
\end{lemma}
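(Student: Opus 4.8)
The strategy is uniform across the three parts: by the universal property of the initial $\sigma$-algebra $\Sigma_w$ (resp. the initial topology $\tau_w$), a map $F$ valued in $(\Ss(\Zz),\Sigma_w)$ is measurable if and only if $I_{1_E}\circ F$ is measurable for every $E\in\Sigma_\Zz$ (equivalently $I_f\circ F$ is measurable for every $f\in\Ff_s(\Zz)$, and, by Lemma \ref{lem:fsb}, for every $f\in\Ff_b(\Zz)$), while a map valued in $(\Ss(\Zz),\tau_w)$ is continuous if and only if $I_f\circ F$ is continuous for every $f\in C_b(\Zz)$. In each case I verify these conditions by a short computation on a generating family together with a monotone-class (or approximation) step.

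\textbf{Part (1).} By linearity in the test function it suffices to show that $\Psi_E\colon(\mu,\nu)\mapsto(\mu\cdot\nu)(E)$ is $\Sigma_w\otimes\Sigma_w$-measurable for every $E\in\Sigma_\Xx\otimes\Sigma_\Yy$. On a measurable rectangle $E=A\times B$ one has $(\mu\cdot\nu)(A\times B)=\mu(A)\,\nu(B)=I_{1_A}(\mu)\cdot I_{1_B}(\nu)$, a product of a $\Sigma_w$-measurable function of the first coordinate and one of the second, hence $\Sigma_w\otimes\Sigma_w$-measurable; so the class $\Dd$ of sets $E$ for which $\Psi_E$ is measurable contains the $\pi$-system of rectangles (in particular $\Xx\times\Yy$). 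Since $\Psi_{E^c}=\Psi_{\Xx\times\Yy}-\Psi_E$ and $\Psi_{\dot\cup_k E_k}=\sum_k\Psi_{E_k}$ as a pointwise-convergent series (the product signed measure being finite), $\Dd$ is a Dynkin system, whence $\Dd=\Sigma_\Xx\otimes\Sigma_\Yy$ by the monotone class theorem; this gives measurability of $\mathfrak m$. For the topological statement, Fubini's theorem gives $I_{g\otimes h}(\mu\cdot\nu)=I_g(\mu)\cdot I_h(\nu)$ for $g\in C_b(\Xx)$, $h\in C_b(\Yy)$, which is $(\tau_w\times\tau_w)$-continuous; the remaining task is to pass from product test functions to a general $f\in C_b(\Xx\times\Yy)$ via the iterated integral $I_f(\mu\cdot\nu)=\int_\Xx\bigl(\int_\Yy f(x,y)\,d\nu(y)\bigr)\,d\mu(x)$, using boundedness of $f$ together with the weak convergence of both factors. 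This last passage is the only genuinely delicate point of the lemma.

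\textbf{Part (2).} Write $\mathrm{diag}=\mathfrak m\circ\Delta$ where $\Delta\colon\Ss(\Xx)\to\Ss(\Xx)\times\Ss(\Xx)$, $\mu\mapsto(\mu,\mu)$. The map $\Delta$ is $(\Sigma_w,\Sigma_w\otimes\Sigma_w)$-measurable and, when $\Xx$ is a topological space, $(\tau_w,\tau_w\times\tau_w)$-continuous, because both of its coordinate maps are the identity. Composing with the map $\mathfrak m$ of Part (1) yields both assertions.

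\textbf{Part (3).} For $f\in\Ff_s(\Xx)$ we have $I_f\bigl(\sum_{i=1}^n\mu_i\bigr)=\sum_{i=1}^n I_f(\mu_i)$, a finite sum of coordinate-wise $\otimes^n\Sigma_w$-measurable functions, so $\mathfrak a^n$ is measurable; with $\Ff_s$ replaced by $C_b$ the same identity gives continuity when $\Xx$ is a topological space. Finally $\delta^n=\bigl(\mu\mapsto\tfrac1n\mu\bigr)\circ\mathfrak a^n\circ(\delta\times\cdots\times\delta)$, where $\delta\colon\Xx\to\Pp(\Xx)$ is the Dirac map, measurable (and continuous when $\Xx$ is topological) by Examples \ref{ex:probm}\,(ii), where $\delta\times\cdots\times\delta$ is its $n$-fold product, and where the dilation $\mu\mapsto\tfrac1n\mu$ is measurable and (in the topological case) continuous since $I_f\circ(\mu\mapsto\tfrac1n\mu)=\tfrac1n I_f$. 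As a composition of measurable (resp. continuous) maps, $\delta^n$ is measurable (resp. continuous).
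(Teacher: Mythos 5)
Your proof follows essentially the same route as the paper's: reduce everything to the generating test functions for $\Sigma_w$ (resp.\ $\tau_w$) and verify the resulting scalar identities coordinatewise; Parts (2) and (3) in particular match the paper's argument (composition with the diagonal, coordinatewise additivity of $I_f$, and the factorization of $\delta^n$ through the Dirac map of Examples \ref{ex:probm}(ii)). Two points of comparison in Part (1) are worth recording. First, the paper simply asserts that it suffices to check measurability of $(\mu,\nu)\mapsto(\mu\cdot\nu)(A\times B)$ on rectangles; since $\Sigma_w$ on $\Ss(\Xx\times\Yy)$ is generated by the evaluations $I_{1_E}$ for \emph{all} $E\in\Sigma_\Xx\otimes\Sigma_\Yy$, this reduction requires exactly the $\pi$--$\lambda$ (Dynkin-system) argument you supply, so your write-up closes a step the paper leaves implicit. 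Second, for the $(\tau_w,\tau_w)$-continuity of $\mathfrak m$ you correctly observe that checking $I_{g\otimes h}\circ\mathfrak m$ for $g\in C_b(\Xx)$, $h\in C_b(\Yy)$ is not enough --- one must handle arbitrary $f\in C_b(\Xx\times\Yy)$ --- and you flag this as the delicate point without carrying it out; the paper elides the same point entirely (its displayed composition only treats product test functions, indeed indicator functions). For general topological spaces the algebra spanned by products $g\otimes h$ need not be sup-norm dense in $C_b(\Xx\times\Yy)$, so this step genuinely needs an argument; since the lemma is only invoked later for separable metrizable $\Xx,\Yy$, the standard fact that weak convergence of marginals implies weak convergence of product measures in that setting (via the iterated integral $I_f(\mu\cdot\nu)=\int_\Xx\bigl(\int_\Yy f(x,y)\,d\nu(y)\bigr)d\mu(x)$ and uniform approximation on compacta) would close it. In short: your proposal is correct in approach, strictly more careful than the paper on the measurability claim, and honestly incomplete on a continuity step that the paper is silently incomplete on as well.
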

\begin{proof}
(1)	To prove  that the map  $\mathfrak m$ is measurable, it suffices  to show that  for any $A  \in \Sigma _\Xx$  and $B \in \Sigma_\Yy$ the map $I_{1_{A\otimes B}}: (\Ss (\Xx) , \Sigma_w) \times (\Ss ( \Yy), \Sigma_w) \to \R,\, (\mu, \nu) \mapsto \mu (A)\nu (B),$
is measurable.  The  map  $I_{1_{A \otimes B}}$ is measurable,  since  it can be written as  the composition of measurable  mappings
\begin{equation}\label{eq:mult} (\Ss (\Xx), \Sigma_w) \times (\Ss (\Yy), \Sigma_w) \stackrel{(1_A, 1_B)}{\longrightarrow}\R \times \R \stackrel{\mathfrak m_\R}{\longrightarrow \R}
\end{equation}
where   $\mathfrak m_\R  (x, y)  = x\cdot  y$.
	
Similarly, we  prove   that  $\mathfrak m$ is   $(\tau_w, \tau_w)$-continuous, if $\Xx$    and $\Yy$ are topological  spaces,  since  all  the   mappings in  \eqref{eq:mult}	are  continuous.

(2)    To   prove  the second assertion, we   write
${\rm  diag}  =  \mathfrak m \circ  {\rm Diag}$,  where
$$ {\rm Diag}: (\Ss (\Xx), \Sigma_w) \to  (\Ss (\Xx), \Sigma_w)  \times  (\Ss (\Xx), \Sigma_w), \, \mu \mapsto (\mu, \mu).$$
Taking into account the measurability of the map ${\rm Diag}$ and   of the map  $\mathfrak m$, we  conclude that ${\rm  diag}$ is a measurable map. 

If  $\Xx$ and $\Yy$ are topological spaces, then  ${\rm Diag}$ is a continuous map. Taking into account   Lemma \ref{lem:diag}(1),  we prove  the  continuity  of  ${\rm diag}$. 

(3) First we  prove the  case $n =2$.  To prove the  measurability  of the map $\mathfrak a^2$ it suffices to show that for any
$f \in \Ff_s(\Xx)$ the composition $I_f \circ \a: (\Ss(\Xx)\times \Ss(\Xx),\Sigma_w \otimes\Sigma_w) \to \R$ is measurable. Using the formula
$$I_f\circ  \a (\mu, \nu) = I_{1_\Xx}(f \mu) + I_{1_\Xx}(f \nu),$$
we reduce the  proof of the measurability  of $I_f \circ \a$  to proving the   measurability of the map
$\a^2: \R \times \R \to \R, (x, y) \mapsto (x+y),$ which is well-known. 

In the same way, we prove the continuity of the map $\a^2$, if $\Xx$ is a topological space.

For $n \ge 2$ we use the formula  $\a ^n  (\mu_1, \ldots, \mu_n)= \a^ 2( \a ^{n-1}(\mu_1, \ldots, \mu_{n_1}), \mu_n)$  and taking into account  the validity of the assertion for $n =2$  we  prove the first  part  of  assertion (3).

  Similarly, the  proof  of continuity  of  $\delta^n$ if $\Xx$ is topological  space can  be  reduced  to the  case $n =2$, which is well-known.
  
 This completes  the  proof of Lemma \ref{lem:diag}.
\end{proof}
  
\begin{remark}\label{rem:m}  For the  general abstract story behind the formation of the map $\mathfrak m$, defined in Lemma \ref{lem:diag}, see Kock \cite{Kock2011},  and Fritz-Perrone-Rezagholi \cite{FPR2021}  for a similar result.
\end{remark}

From now on  we shall drop  $\Sigma_w$, as long as  there is no possibility of confusion or misunderstanding.   

\begin{definition}\label{def:graph}  (1)  Given  two  bounded s-probabilistic morphisms
	$T_i: \Xx \leadsto \Yy_i$  $ i = 1,2$,     {\it the
joint  of  $T_1$ and $T_2$ }  is  the  bounded s-probabilistic morphisms  $T_1 \cdot T_2: \Xx \leadsto \Yy_1 \times \Yy_2$   whose  generating   mapping  $\overline{T_1 \cdot T_2}: \Xx \to \Ss (\Yy_1 \times \Yy_2)$  is given by:
$$\overline{T_1 \cdot T_2} (x) = \mathfrak m (\overline   T_1(x), \overline T_2(x)): \Xx \to \Ss (\Yy_1 \times \Yy_2).$$
	
(2)  Given  a      bounded  s-probabilistic  morphism  $T: \Xx \leadsto \Yy$  we denote  the joint  of  $\Id_\Xx$ with  $T$ by
  $\Gamma  _T: \Xx \leadsto  \Xx \times  \Yy$  and call  it  the   {\it graph  of $T$}. 
  \end{definition}

It follows  from  Definition  \ref{def:graph}(2)  that   for any $T \in \mathbf{sbProbm} (\Xx, \Yy)$, $A \in \Sigma_\Xx$ and $B \in \Sigma_\Yy$, we have \begin{equation}\label{eq:jointmeasure}
(\Gamma_T)_*\mu_\Xx (A \times  B ) = \int _\Xx \overline{\Gamma_T} (x) (A \times  B)\, d\mu_\Xx (x) = \int_A  \overline{T} (x) (B)\, d\mu _\Xx(x).
\end{equation} 
Hence,  for  any $f \in \Ff_b (\Xx \times \Yy)$, we have 
\begin{equation}\label{eq:jointmeasureb}
\int _{\Xx \times \Yy} f (x, y)\, d(\Gamma_T)_*\mu_\Xx (x, y) = \int_\Xx \int _\Yy f (x, y)  d \overline T(y|x)\mu_\Xx (x).
\end{equation} 

\begin{remark}\label{rem:graph} (1) The notion of a graph of a probabilistic morphism  $f$ has been appeared  first in Jost-L\^e-Tran \cite{JLT2021}, the arXiv version,  but without a definition.  The first  definition of this  concept  has  been  given in  Fritz-Gonda-Perrone-Rischel's paper \cite{FGPR2020}, where  they call  the graph of  $f$  the {\it input-copy version} or {\it bloom} of $f$.
	
(2) If  $\kappa: \Xx \to \Yy$ is a measurable mapping, then  $\overline{\Gamma_{\kappa}} (x, y) = \delta_x  \delta_{\kappa (x)} = \delta_{ (x, \kappa (x))}$. Thus  $\Gamma _{\kappa}$ is the graph of $\kappa$, i.e. 
	$\Gamma_{\kappa} (x, y) = (x, 	\kappa (y))$.
\end{remark}

We  shall  show the $(\tau_w, \tau_w)$-continuity  of the  map $(\Gamma_T)_*$ under certain conditions.

\begin{proposition}\label{prop:tcontinuous}  Assume  that $\Xx$ is a topological space, $\Yy$   is  a compact  metrizable  topological   space,   and $\overline T: \Xx \to  (\Pp (\Yy), \tau_w)$  is a  continuous  mapping. Then $\overline T$ is a Markov kernel, i.e.  $\overline T: \Xx \to (\Pp (\Yy), \Sigma_w)$ is a measurable  mapping.  Furthermore the  map
	$$(\Gamma_T)_*: (\Ss (\Xx), \tau_w) \to (\Ss (\Xx \times \Yy), \tau_w), \mu \mapsto (\Gamma_T)_* \mu,$$
	is a continuous  map.
\end{proposition}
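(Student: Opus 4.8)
\emph{Strategy.} I would handle the two assertions in turn, deriving the second from Lemma~\ref{lem:meast}(2) after realizing $\Gamma_T$ as a joint in the sense of Definition~\ref{def:graph}. For the first assertion: since $\Yy$ is compact metrizable it is in particular separable metrizable, so by the facts on $\tau_w$ recalled in Subsection~\ref{subs:not} the space $(\Pp(\Yy),\tau_w)$ is separable metrizable and its Borel $\sigma$-algebra $\Bb(\tau_w)$ coincides with $\Sigma_w$. A continuous map between topological spaces is Borel measurable, so $\overline T\colon(\Xx,\Bb(\Xx))\to(\Pp(\Yy),\Bb(\tau_w))=(\Pp(\Yy),\Sigma_w)$ is measurable; moreover $\|\overline T(x)\|_{TV}=\overline T(x)(\Yy)=1$ for every $x$, so $\overline T$ is a bounded measurable map, and hence $T\in\mathbf{Probm}(\Xx,\Yy)$.

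\emph{The continuity of $(\Gamma_T)_*$.} By Definition~\ref{def:graph}, $\Gamma_T$ is the joint of $\Id_\Xx$ and $T$, so $\overline{\Gamma_T}(x)=\mathfrak m\bigl(\overline{\Id_\Xx}(x),\overline T(x)\bigr)=\mathfrak m(\delta_x,\overline T(x))=\delta_x\cdot\overline T(x)\in\Pp(\Xx\times\Yy)$; in particular $\Gamma_T\in\mathbf{Probm}(\Xx,\Xx\times\Yy)$ and the map in the statement is $(\Gamma_T)_*=S_*(\Gamma_T)$. By Lemma~\ref{lem:meast}(2), applied with source $\Xx$ and target $\Xx\times\Yy$, it then suffices to prove that $\overline{\Gamma_T}\colon\Xx\to(\Pp(\Xx\times\Yy),\tau_w)$ is continuous. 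I would obtain this by factoring $\overline{\Gamma_T}$ as
\[
\Xx\xrightarrow{\;x\mapsto(\delta_x,\overline T(x))\;}(\Ss(\Xx),\tau_w)\times(\Ss(\Yy),\tau_w)\xrightarrow{\;\mathfrak m\;}(\Ss(\Xx\times\Yy),\tau_w),
\]
where the first arrow is continuous because $\delta\colon\Xx\to(\Pp(\Xx),\tau_w)$ is continuous (Examples~\ref{ex:probm}(ii)) and $\overline T$ is continuous by hypothesis, and the second arrow is $(\tau_w,\tau_w)$-continuous by Lemma~\ref{lem:diag}(1). Composing and restricting the range to $\Pp(\Xx\times\Yy)$ gives the continuity of $\overline{\Gamma_T}$, and Lemma~\ref{lem:meast}(2) then finishes the proof.

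\emph{The main obstacle.} Everything above is an assembly of results already in the paper; the single point where genuine analysis — and the compactness of $\Yy$ — enters is the continuity just invoked. Unwinding Lemma~\ref{lem:meast}(2), one must check that for each $g\in C_b(\Xx\times\Yy)$ the function $x\mapsto\int_\Yy g(x,y)\,d\overline T(x)(y)$ lies in $C_b(\Xx)$, so that $I_g\circ(\Gamma_T)_*=I_{(\Gamma_T)^*g}$ is $\tau_w$-continuous on $\Ss(\Xx)$. Boundedness is immediate, and for continuity at $x_0$ I would write the increment as $\int_\Yy\bigl(g(x,y)-g(x_0,y)\bigr)\,d\overline T(x)(y)$ — whose absolute value is at most $\sup_{y\in\Yy}|g(x,y)-g(x_0,y)|$ — plus $\int_\Yy g(x_0,y)\,d\overline T(x)(y)-\int_\Yy g(x_0,y)\,d\overline T(x_0)(y)$, which tends to $0$ as $x\to x_0$ by the $\tau_w$-continuity of $\overline T$ since $g(x_0,\cdot)\in C_b(\Yy)$. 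The hard part is controlling the first term: it requires $\sup_{y\in\Yy}|g(x,y)-g(x_0,y)|\to0$ as $x\to x_0$, a ``partial uniform continuity'' of $g$ that follows from its joint continuity together with the compactness of $\Yy$ by a tube-lemma (Wallace) argument; with this the estimate closes and the proposition follows. Equivalently, this is precisely the subtlety subsumed in the $(\tau_w,\tau_w)$-continuity of $\mathfrak m$ asserted in Lemma~\ref{lem:diag}(1), which is what the route above relies on.
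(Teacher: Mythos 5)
Your proof is correct, and its analytic core coincides with the paper's. The measurability assertion is handled exactly as in the paper (compact metrizable implies separable metrizable, $\Bb(\tau_w)=\Sigma_w$, and continuity implies Borel measurability), and your ``main obstacle'' paragraph is precisely the paper's Lemma \ref{lem:contt1}: the two-term splitting of the increment of $x\mapsto\int_\Yy g(x,y)\,d\overline T(x)(y)$, with the first term controlled by $\sup_{y\in\Yy}|g(x,y)-g(x_0,y)|\to 0$ via compactness of $\Yy$ (tube-lemma argument) and the second by the $\tau_w$-continuity of $\overline T$. Where you differ is in the packaging of the reduction: the paper uses linearity of $(\Gamma_T)_*$ to reduce to sequences tending to $0$ and then applies the identity $\int_{\Xx\times\Yy} f\,d(\Gamma_T)_*\mu = \int_\Xx F_{T,f}\,d\mu$ directly, whereas you factor $\overline{\Gamma_T}=\mathfrak m\circ(\delta,\overline T)$ and invoke Lemma \ref{lem:meast}(2) together with the $(\tau_w,\tau_w)$-continuity of $\mathfrak m$ from Lemma \ref{lem:diag}(1). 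One caveat about that high-level route: Lemma \ref{lem:diag}(1) asserts continuity of $\mathfrak m$ for arbitrary topological spaces, and its proof in the paper only tests against product indicators $1_{A\otimes B}$, which are neither continuous nor, a priori, sufficient to detect $\tau_w$ on $\Ss(\Xx\times\Yy)$ (whose generating functionals are $I_f$ for $f\in C_b(\Xx\times\Yy)$); taken at face value it would make the compactness of $\Yy$ superfluous here, which should be a warning sign. Your closing observation that the genuine content is the partial uniform continuity supplied by compactness is therefore the right one, and your self-contained verification of it is what actually carries the proof --- exactly as in the paper.
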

\begin{proof} Assume the condition  of Proposition \ref{prop:tcontinuous}.  Then $\overline T$ is a Markov kernel, since  $\Yy$  is  a separable metrizable topological space  and   $\Bb (\tau_w) = \Sigma_w$ by \cite[Theorem 2.3]{GH89}. 
	
	Now let  us  prove the  $(\tau_w, \tau_w)$-continuity  of  $(\Gamma_T)_*$. Since  $(\Gamma_T)_*$ is a linear map, it suffices to show that 
	\begin{equation*}
	\tau_w\mhyp
	\lim_{n \to \infty} (\Gamma_T)_*\mu_n = 0\in (\Ss(\Xx \times \Yy), \tau_w) \text{   if }  \tau_w\mhyp\lim_{ n\to \infty} \mu_n  = 0 \in (\Ss (\Xx), \tau_w).
	\end{equation*}
	Thus it  suffices  to show  that  if  $ \tau_w\mhyp\lim_{ n\to \infty} \mu_n  = 0$ then  for any $f \in C_b (\Xx \times \Yy)$ we have
	\begin{equation}\label{eq:lim1}
	\lim_{n \to \infty} \int_{\Xx \times \Yy}   f d  (\Gamma_T)_*\mu_n  = 0.
	\end{equation}
	\begin{lemma}\label{lem:contt1} Assume  the conditions of Proposition \ref{prop:tcontinuous}.  Then  for any $f \in   C_b (\Xx \times \Yy)$ the function  $F_{T, f}: \Xx \to \R$
		$$  F_{T, f} (x) : = \int _\Yy f (x, y)d \overline T (y|x),$$
		belongs  to $C_b (\Xx)$.
	\end{lemma}
	\begin{proof} Since  $f$ is bounded,   $F_{T, f}: \Xx \to \R$ is bounded.   
		To  show that  $F_{T, f}$ is continuous, it suffices  to show  that
		\begin{equation}\label{eq:lim2}
		\lim_{x' \to x}  F_{T, f} (x') =  F_{T, f} (x).
		\end{equation}
		We  write
		$$  F_{T, f} (x') -  F_{T, f} (x) = \int _{\Yy} \big  (f (x, \cdot ) - f(x',\ \cdot)\big)\, d T (x) + \int _\Yy f (x', \cdot )d (T(x) - T(x')).$$
		Since  $\Yy$ is compact, 
		$$\lim_{x' \to x} \|  f(x', \cdot ) - f(x, \cdot )\|_\infty = 0.$$
		It follows  that
		$$\lim_{x' \to x}\int _{\Yy} \big  (f (x, \cdot ) - f(x', \cdot) \big)\, d T (x) = 0.$$
		Since  $T: \Xx \to (\Pp (\Yy), \tau_w)$  is continuous
		$$\lim_{x' \to x}\int _\Yy f (x', \cdot )d (T(x) - T(x'))  = 0,$$
		Thus \eqref{eq:lim2} holds. This completes  the proof  of Lemma \ref{lem:contt1}.
	\end{proof}
	{\it Completion of   the  proof of Proposition \ref{prop:tcontinuous}}. Let  $f \in C_b (\Xx \times \Yy)$. Then 
	$$\int _{\Xx \times \Yy} f d(\Gamma_T)_* \mu_n = \int_\Xx F_{T, f} d\mu_n.$$
	By  Lemma \ref{lem:contt1}, $F_{T, f} \in C_b (\Xx)$.  Hence
	Equation \eqref{eq:lim1} holds, if  $\lim_{n \to \infty} \mu_n = 0 \in (\Ss (\Xx), \tau_w)$. This completes  the  proof of Proposition \ref{prop:tcontinuous}.
\end{proof}

Next  we present a concept  of  almost surely  equality  of   two
bounded s-probabilistic morphisms, which is consistent   with the  concept of
almost equality in Markov  category proposed  by Fritz \cite[Example 13.3, \S 13]{Fritz2019}.

\begin{definition}\label{def:asep} Let $\mu \in \Pp(\Xx)$.   Two bounded measurable mappings $T, T':\Xx\to  \Ss(\Yy)$  will be called  {\it equal   $\mu$-a.e.} (with the shorthand notation $ T =  T' $    $\mu$-a.e.), if  for  any $B \in \Sigma_\Yy$
	$$\mu\{x\in \Xx: T(x)(B) \not =  T'(x)(B)\} = 0.$$
\end{definition}

\begin{lemma}\label{lem:almostsurely} (1)  Assume that  $\mu_\Xx \in \Pp (\Xx)$ and  $T_1, T_2 \in \mathbf{sbProbm} (\Xx, \Yy)$. Then $(\Gamma_{T_1})_* \mu_\Xx = (\Gamma_{T_2})_* \mu_\Xx$ if and only if $\overline T_1 = \overline T_2$ $\mu_\Xx$-a.e.
	
(2) Assume that  $\mu_\Xx \in \Ss (\Xx)$ and  $T_1, T_2 \in \mathbf{Probm} (\Xx, \Yy)$. Then\\
 $(\Gamma_{T_1})_* \mu_\Xx = (\Gamma_{T_2})_* \mu_\Xx$ if and only if $\overline T_1 = \overline T_2$ $|\mu_\Xx|$-a.e.

(3) We have $\Gamma_{T_1 + T_2} = \Gamma _{T_1} + \Gamma_{T_2}$.

(4) For any  $T \in \mathbf{sbProbm}(\Xx, \Yy)$  we  have
\begin{equation}\label{eq:tgt}
T = \Pi_\Yy \circ  \Gamma_T.
\end{equation}

\end{lemma}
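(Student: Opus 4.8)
The plan is to treat the four assertions in a convenient order, proving (1) in detail and then reducing (2), (3), (4) to it or to direct computation.

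\emph{Assertion (1).} I would argue both directions. For the ``if'' direction, suppose $\overline T_1 = \overline T_2$ $\mu_\Xx$-a.e. Then for any $A \in \Sigma_\Xx$ and $B \in \Sigma_\Yy$, formula \eqref{eq:jointmeasure} gives $(\Gamma_{T_i})_*\mu_\Xx(A\times B) = \int_A \overline T_i(x)(B)\,d\mu_\Xx(x)$, and the two integrands agree off a $\mu_\Xx$-null set, so the integrals coincide. Since the measurable rectangles $A\times B$ generate $\Sigma_\Xx\otimes\Sigma_\Yy$ and form a $\pi$-system, and both sides are finite measures, a monotone-class / Dynkin argument upgrades this to equality on all of $\Sigma_\Xx\otimes\Sigma_\Yy$. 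For the ``only if'' direction, assume $(\Gamma_{T_1})_*\mu_\Xx = (\Gamma_{T_2})_*\mu_\Xx$. Again by \eqref{eq:jointmeasure}, for every fixed $B \in \Sigma_\Yy$ the two functions $x\mapsto \overline T_1(x)(B)$ and $x\mapsto\overline T_2(x)(B)$ in $\Ff_b(\Xx)$ have the same integral over every $A \in \Sigma_\Xx$, hence are equal $\mu_\Xx$-a.e.; call the exceptional null set $N_B$. The subtlety is that a priori one gets a null set $N_B$ depending on $B$, and an uncountable union of null sets need not be null. Here the standing assumptions on $\Yy$ (that regular conditional probabilities exist — in practice $\Yy$ Souslin/Polish, so $\Sigma_\Yy$ is countably generated) save the day: pick a countable generating algebra $\Gg$ of $\Sigma_\Yy$ closed under finite operations, let $N = \bigcup_{B\in\Gg} N_B$, which is $\mu_\Xx$-null, and note that for $x\notin N$ the measures $\overline T_1(x)$ and $\overline T_2(x)$ agree on $\Gg$, hence on $\sigma(\Gg)=\Sigma_\Yy$ by uniqueness of measures. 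This gives $\overline T_1 = \overline T_2$ $\mu_\Xx$-a.e. \textbf{This is the main obstacle of the whole lemma} — the measurable-selection-flavored point that countable generation of $\Sigma_\Yy$ is what makes the ``$\mu_\Xx$-a.e. equality of the $\Pp(\Yy)$-valued maps'' statement meaningful and provable.

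\emph{Assertion (2).} For signed $\mu_\Xx \in \Ss(\Xx)$ and $T_1,T_2\in\mathbf{Probm}(\Xx,\Yy)$, I would pass to the Jordan--Hahn decomposition $\mu_\Xx = \mu_\Xx^+ - \mu_\Xx^-$ with Hahn sets $\Xx^\pm$. Equality of the pushforwards $(\Gamma_{T_i})_*\mu_\Xx$ together with the fact that $\Gamma_{T_i}$ does not move the $\Xx$-coordinate (so the Hahn decomposition of $(\Gamma_{T_i})_*\mu_\Xx$ on $\Xx\times\Yy$ is carried by $\Xx^\pm\times\Yy$, using \eqref{eq:jointmeasure} with $B=\Yy$ to see $(\Gamma_{T_i})_*\mu_\Xx$ restricted to $\Xx^\pm\times\Yy$ is $\pm(\Gamma_{T_i})_*\mu_\Xx^\pm$) lets me reduce to the two nonnegative pieces: $(\Gamma_{T_1})_*\mu_\Xx^+ = (\Gamma_{T_2})_*\mu_\Xx^+$ and likewise for $\mu_\Xx^-$. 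Normalizing these finite measures to probability measures (or just rerunning the $\pi$-system argument of (1), which only used finiteness, not normalization) yields $\overline T_1 = \overline T_2$ $\mu_\Xx^+$-a.e. and $\mu_\Xx^-$-a.e., i.e.\ $|\mu_\Xx|$-a.e.

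\emph{Assertions (3) and (4).} These are direct computations. For (3), by Definition \ref{def:graph}(2) and the definition of the joint, $\overline{\Gamma_{T_1+T_2}}(x) = \mathfrak m(\delta_x, \overline T_1(x)+\overline T_2(x))$; since $\mathfrak m$ is bilinear this equals $\mathfrak m(\delta_x,\overline T_1(x)) + \mathfrak m(\delta_x,\overline T_2(x)) = \overline{\Gamma_{T_1}}(x) + \overline{\Gamma_{T_2}}(x)$, which is the claim. For (4), compute $\overline{\Pi_\Yy\circ\Gamma_T}(x)(B)$ using the composition formula \eqref{eq:comp} with the deterministic morphism $\Pi_\Yy$: it equals $\int_{\Xx\times\Yy} \overline{\Pi_\Yy}(x',y')(B)\, d\overline{\Gamma_T}(x',y'\,|\,x) = \int_{\Xx\times\Yy} 1_B(y')\, d(\delta_x\cdot\overline T(x))(x',y') = \overline T(x)(B)$, using $\overline{\Pi_\Yy} = \delta\circ\Pi_\Yy$ and Fubini for the product measure $\delta_x\cdot\overline T(x)$. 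Hence $T = \Pi_\Yy\circ\Gamma_T$. $\hfill\Box$
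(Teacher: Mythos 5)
Your proof is correct and follows essentially the same route as the paper: reduce (1) to a fiberwise integral identity via \eqref{eq:jointmeasure}, deduce (2) from (1) via the Jordan--Hahn decomposition, and verify (3) and (4) by direct computation with the product measure $\delta_x\cdot\overline T(x)$. The one place where you go beyond the paper is the ``only if'' direction of (1): the paper only shows that for each fixed $B\in\Sigma_\Yy$ the set $\{x:\overline T_1(x)(B)\neq\overline T_2(x)(B)\}$ is $\mu_\Xx$-null and stops there, whereas you correctly note that obtaining a single null set off which $\overline T_1(x)=\overline T_2(x)$ as measures requires $\Sigma_\Yy$ to be countably generated (available under the paper's standing Souslin-type assumptions on $\Yy$) together with a Dynkin-system argument over a countable generating algebra. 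This is a genuine gap-filling refinement rather than a different method.
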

\begin{proof}  (1)  Assume the condition  of  Lemma \ref{lem:almostsurely}. To prove  the first assertion of  Lemma \ref{lem:almostsurely} it suffices  to show that  $(\Gamma_T)_*\mu_\Xx = 0$ if and only if $\overline T = 0$ $\mu_\Xx$-a.e. 

The  `iff" assertion follows immediately from \eqref{eq:markov1S}, so let us
prove   the  ``only if" assertion.
Since for  any $B \in\Sigma_\Yy$ the function  $\overline T(\cdot)(B): \Xx \to \R$ is measurable,  $B^+_\Xx: = \{ x\in \Xx| \overline T (x) (B) >0\}$ is  a measurable  subset  of $\Xx$.  By the   assumption, we  have
$$(\Gamma _T)_* \mu_\Xx (B^+_\Xx\times B) =\int_{B^+_\Xx} \overline T(x) (B)\, d\mu_\Xx (x) = 0.$$
Since  $\overline T(x) (B) > 0 $ for all $x \in B^+_\Xx$, it follows   that  $\mu_\Xx (B^+_\Xx) = 0$. Similarly  we prove  $\mu_\Xx (B_\Xx^-) = 0$,
where  $B^-_\Xx: = \{ x\in \Xx| \overline T (x) (B) <0\}$. This proves 
the first assertion.

(2) The second assertion of Lemma \ref{lem:almostsurely}  follows  from the first one, taking into account the Jordan-Hahn decomposition  of $\mu_\Xx$ and Remark  \ref{rem:push} (1).

(3)  The  third assertion  of Lemma \ref{lem:almostsurely}  is straightforward.

(4)  We  verify immediately from   Definition \ref{def:graph} (2)  that
\begin{equation}\label{eq:compost}
\overline T = (\Pi_\Yy)_*\circ \overline {\Gamma_T}.
\end{equation}
Combining  \eqref{eq:compost}  with \eqref{eq:composti},  we  complete  the  proof  of  the last assertion of Lemma \ref{lem:almostsurely}.
\end{proof}
 
Lemma \ref{lem:almostsurely}  motivates  the   following notation.
Given $\mu \in \Mm (\Xx)$ we  denote by $\mathbf{Meas} (\Xx,\Ss( \Yy); \mu)$   the  quotient  space
$\mathbf{Meas} (\Xx, \Ss (\Yy))$ under    $\mu$-a.e.  equality and by  $[T]_{\mu}$,  $\mathbf{Meas} (\Xx, \Pp (\Yy); \mu)$ the image  of   $T \in \mathbf{Meas} (\Xx,\Ss( \Yy))$  and of  $\mathbf{Meas}(\Xx, \Pp (\Yy))$, respectively,  in the  quotient space.

\subsection{A characterization of  regular  conditional  probability measures}

For  $p \ge 1$ and  a $\sigma$-finite measure $\mu$ on $\Xx$ we set  \cite[\S 3.3]{Le2022}, cf. \cite[(3.51), p. 144]{AJLS17}
$$\Ss^{1/p}  (\Xx, \mu): = \{\nu \in \Ss (\Xx)|\, \nu \ll \mu  \text{ and }  \frac{d\nu}{d\mu} \in L^ p (\mu) \}. $$
The  natural  identification  
$\Ss^{1/p} (\Xx, \mu) = L^ p (\mu)$ defines  a  $p$-norm on $\Ss^{1/p}  (\Xx, \mu)$ by setting
$$\| f \mu\| _p  = \| f \| _{L^p (\mu)}.$$
Then  $\Ss^{1/p} (\Xx, \mu)$ endowed  with  the $p$-norm is a Banach space, which we denote by  $\Ss^{1/p} (\Xx, \mu)_p$. For   $\mu_1 \ll \mu_2$     the linear inclusion 
\begin{equation}\label{eq:incp}
\Ss ^{1/p}  (\Xx, \mu_1)\to \Ss^{1/p}  (\Xx, \mu_2), \,   f \mu_1  \mapsto  f (\frac{d \mu_1}{d\mu_2})^{1/p}\mu_2 \in \Ss^{1/p} (\Xx, \mu_2)
\end{equation}
preserves the $p$-norm.

For   a  $\sigma$-finite measure $\mu$ on $\Xx$  we set $\Pp^{1/p} (\mu): = \Pp (\Xx) \cap \Ss^{1/p} (\Xx, \mu)$.

\begin{theorem}\label{thm:marginal}  (1) A bounded measurable mapping  $\overline T : \Xx \to \Ss( \Yy)$ is a regular conditional  probability measure for $\mu\in \Pp(\Xx \times \Yy)$   with  respect  to the    projection  $\Pi_\Xx$ if  and only if
\begin{equation}\label{eq:graph}
(\Gamma_T)_* \mu_\Xx =\mu.
\end{equation}

(2)  If  $ \overline  T,\,  \overline T':\Xx \to \Pp (\Yy)$ are  regular conditional  probability measures for $\mu\in \Pp(\Xx \times \Yy)$   with  respect  to the    projection  $\Pi_\Xx$  then  $  \overline  T =  \overline T'$ $\mu_\Xx$-a.e.

(3)  Assume that $T \in \mathbf{Probm}(\Xx, \Yy)$ and $\rho_0$ is a $\sigma$-finite measure  on $\Yy$ such that  $T(x)\ll \rho_0$ for all $x \in \Xx$. Let $\nu_0 $ be a  $\sigma$-finite measure  on $\Xx$ and  $\nu \in  \Pp^{1/p} (\Xx,\nu_0)$  for some $p \ge 1$.   Then  $ (\Gamma_T)_ *  \nu \in \Ss^{1/p} (\Xx \times \Yy, \nu_0\rho_0)$. 
\end{theorem}

\begin{proof}     (1)  Assume that $\mu \in \Pp (\Xx \times \Yy)$,   $T \in \mathbf{sProbm} (\Xx, \Yy)$ and \eqref{eq:graph} holds. 	 Then   by  \eqref{eq:compost},   $T$ must   be   a probabilistic morphism.  Now  recall  that  for   any $A \in \Sigma_\Xx$, $B \in  \Sigma_\Yy$ we have
\begin{equation*} 
	 (\Gamma_T)_*\mu_\Xx (A \times  B ) = \int _\Xx \overline{\Gamma_T} (x) (A \times  B)\, d\mu_\Xx (x) = \int_A  \overline{T} (x) (B)\, d\mu _\Xx(x),
\end{equation*} 

This  implies  that  $\overline T$ is  a   regular   conditional  measure  for $\mu$ with respect to the projection $\Pi_\Xx$. This proves the  ``if" assertion.

Conversely,  if  $\overline T: \Xx \to  \Ss (\Yy)$  is a  regular   conditional probability measure for  $\mu\in \Pp (\Xx \times \Yy)$ then $\overline T(\Xx) \subset \Pp (\Xx)$  and \eqref{eq:jointmeasure}  holds.  Taking into account \eqref{eq:markov1S}, we conclude  that  \eqref{eq:graph}  holds. This completes  the  proof of   assertion (1)
of  Theorem \ref{thm:marginal}.

\

(2)     The second assertion  follows  from the first one and Lemma \ref{lem:almostsurely}(1).
\

(3)    Let  $T\in \mathbf{Probm}(\Xx, \Yy)$ and  $T(x)\ll  \rho_0$ for all $x \in \Xx$. We write $T(x) = g(\cdot |x)\rho_0$ where $g (\cdot|x)$ is the density of $T(x)$ with respect to $\rho_0$.  Taking into account  the  disintegration  formula  for    $f \in \Ff_s (\Xx \times \Yy)$,
\begin{equation}\label{eq:graphh}
\int_{\Xx \times \Yy} f(x,y) \, d (\Gamma_{T})_* \nu (x,y) = \int_{\Xx \times \Yy}  f(x, y) g(y|x)d\rho_0(y) d \nu(x), 
\end{equation}
it  follows  that $(\Gamma_{T})_* \nu=  g\nu \rho_0$. Hence
\begin{equation}\label{eq:dominatedt}
(\Gamma_{T})_* \nu \ll \nu_0\rho_0.
\end{equation}
Since  $\nu \in \Pp^ {1/p} (\Xx, \nu)$, 
by \cite[Corollary 5.1, p. 260]{AJLS17}, which states that  if $T \in \Probm (\Xx, \Yy)$ and $\mu \in \Mm(\Xx)$ then  $T_* (S^{1/p} (\Xx, \mu)) \subset S^{1/p} (\Yy, T_* \mu)$,  we have
\begin{equation}\label{eq:corollary51}
(\Gamma_T)_* \nu \in \Ss^{1/p} (\Xx \times \Yy, (\Gamma_T)_*\nu).
\end{equation}
Taking into account \eqref{eq:graphh},  we obtain  immediately the last assertion of  Theorem \ref{thm:marginal}.
\end{proof}

\begin{remark}\label{rem:condtional} In the setting of Markov categories, Equation \eqref{eq:graph}  for   $T \in \mathbf{Probm}(\Xx, \Yy)$ is exactly the definition of conditional  \cite[\S 3]{CJ19}.
\end{remark}

\section{Generative  models  of supervised  learning}\label{sec:gen}

In this section, we introduce the concept of a generative model of supervised learning  (Definition \ref{def:superlp}), which encompasses various models used for multi-classification, regression tasks, conditional probability estimation  and probability measure estimation. We also present the concept of a correct loss function (Definition \ref{def:correctloss}), which includes many commonly used loss functions in classical statistics and statistical learning theory (Examples \ref{ex:densitycorrect}).  Additionally, we discuss important properties of inner and outer measures. Finally, we explore general statistical learning models (Definition \ref{def:unifiedl}), their learnability (Definition \ref{def:genbi}, Remark \ref{rem:genbi}), and provide a sufficient condition for the learnability of a statistical learning model (Theorem \ref{thm:uniform})  by introducing the concept of a 
  $C$-ERM algorithm (Definition \ref{def:aserm}, Remark \ref{rem:cerm}).  

 \subsection{Generative models of supervised  learning}\label{subs:gen}
\begin{definition}\label{def:superlp}  {\it A generative model of supervised learning} is  defined as a   quintuple  $(\Xx, \Yy, \Hh, R, \Pp_{\Xx \times \Yy})$,  where $\Xx$   and $\Yy$ are  measurable spaces,  $\Hh$  is a  family  of measurable  mappings $\bar h: \Xx \to \Pp (\Yy)$, $\Pp_{\Xx \times \Yy}\subset \Pp (\Xx \times \Yy)$  contains  all possible   probability measures  that govern the   distributions of  labeled  pairs $(x, y)$,  and   $R : \Hh \times \big (\Pp_{\Xx  \times \Yy}\cup \Pp_{emp}  (\Xx \times \Yy)\big) \to \R  \cup \{+\infty\}$ is  a risk/loss  function. It should  satisfy the condition  that    for  any $\mu \in \Pp_{\Xx \times \Yy}\cup \Pp_{emp} (\Xx \times \Yy)$  we have $\inf_{h \in \Hh}R(h, \mu) \not = \pm \infty$. If
$R (h, \mu) = \E_\mu  (L  (h))$ where  $L: \Xx  \times \Yy \times \Hh \to \R \{+\infty\}$ is an  instantaneous loss function  then we  can represent   the model as  $(\Xx, \Yy, \Hh, L, \Pp_{\Xx \times \Yy})$. 
{\it  A  classical generative   model  of supervised  learning}   is a   special case  of a generative model   $(\Xx,  \Yy, \Hh, L, \Pp_{\Xx\times \Yy})$, where  $\Hh$  is a  family  of dominated    regular  conditional  probability measures, i.e.,  there exists   a  $\sigma$-finite measure $\rho_0$ on $\Yy$ such that  $\bar h(x)\ll \rho_0$ for  all $ x\in \Xx$, $h \in \Hh$.
\end{definition}

\begin{remark}\label{rem:superl} 
(1)  If $\Xx  $ consists  of a single point $\{pt\}$,   then  $\Pp_{\{pt\}\times \Yy}$ can be identified  with  a    statistical model  $\Pp_\Yy \subset \Pp (\Yy)$. In this case it is natural to assume  that  the  set $\{h(\{pt\})|h \in \Hh\}$ is identified  with $\Pp_\Yy$  and our  model  $(\{pt\}, \Yy, \Hh \cong \Pp_\Yy,  R, \Pp_\Yy)$  of supervised learning   is a model  of density estimation, or more precisely,  a model of  probability  measure  estimation in unsupervised learning.

(2) If  $R$ is generated  by
an instantaneous  loss function  $L:  \Xx  \times \Yy \times \Hh \to \R \cup \{+\infty\}$, then  for any $S = (x_1, y_1, \ldots , x_n, y_n) \in (\Xx\times \Yy)^n$ we   have $R^L _{\mu_S} (h) = \frac{1}{n}\sum_{i =1} ^n L (x_i, y_i, h)$.

(3) Generative models    of supervised learning  encompass discriminative   models of supervised learning where the hypothesis  space $\Hh$ consists  of measurable  mappings. For classical multi-classification and regression tasks, as well as models for learning conditional density functions and conditional probability considered by Vapnik (see Examples \ref{ex:densitycorrect}(3 $\&$ 4)    below), generative models provide a comprehensive framework. 
\end{remark}

\begin{definition}\label{def:correctloss} A  loss  function
	$ R: \Hh \times \big (\Pp_{\Xx \times \Yy}\cup\Pp_{emp} (\Xx \times \Yy)\big) \to \R  \cup \{ + \infty\}$ will  be  called  {\it $\Pp_{\Xx\times \Yy}$-correct},
	if there exists a  set $\widetilde \Hh \subset \mathbf{Meas}  (\Xx, \Pp (\Yy))$ such  that  the following  three conditions   hold
	
	(1) $\Hh \subset \widetilde  \Hh$.
	
	(2)  For any $\mu \in \Pp_{\Xx \times \Yy}$ there exists
	$h \in \widetilde \Hh$ such  that $h$ is  a regular  conditional measure  for $\mu$  relative to the  projection $\Pi_\Xx$, i.e.. $[h]_{\mu_\Xx}= [\mu_{\Yy|\Xx}]$.
	
	(3) $R$ is  the restriction of a  loss function $\widetilde  R: \widetilde \Hh \times \big (\Pp_{\Xx \times \Yy}\cup\Pp_{emp} (\Xx \times \Yy)\big )
	\to \R  \cup \{ + \infty\}$   such that  for  any  $\mu \in \Pp_{\Xx \times \Yy}$    
	$$\arg \min _{h \in \widetilde \Hh} \widetilde R (h, \mu) = \{ h \in \widetilde \Hh|\, [h]_{\mu_\Xx}= [\mu_{\Yy|\Xx}]\}.$$

A  loss  function
$ R: \Hh \times \big (\Pp_{\Xx \times \Yy}\cup\Pp_{emp} (\Xx \times \Yy)\big ) \to \R  \cup \{ + \infty\}$ will  be  called {\it correct}, if  $R$  is   the restriction of   a  $\Pp (\Xx \times \Yy)$-correct  loss function  $\widetilde R: \Hh	\times  \Pp (\Xx \times \Yy) \to \R  \cup \{ + \infty\}$.	
\end{definition}

Given  a  $\sigma$-finite measure $\mu$ on  $\Xx$ we denote  by  $\Ll^1 (\Xx, \mu)$ the set  of  all  $\mu$-integrable functions on $\Xx$ and by $L^1 (\Xx,\mu)$  the set  of    $\stackrel{\mu}{\sim}$ equivalence  classes  in $\Ll^1 (\Xx, \mu)$ where $f\stackrel{\mu}{\sim} g$  iff  $f-g = 0$ $\mu$-a.e.

\begin{examples}\label{ex:densitycorrect}  (1) Let $\Xx = \{ pt \}$, $\Yy = \R$, and $\Hh  = \Pp (\Yy, dy)$  the set   of  all probability measures $ f dy$  where  $dy$ is the Lebesgue  measure  on $\Yy =\R$  and $f \in \Ll^1 (\R, dy)$.   Let  $\Pp_\Yy = \Pp (\Yy, dy)$.  We define
the minus log-likehood  instantaneous loss  function $L: \Hh \times \R \to \R$ by setting 
$$L (f,y)= -\log f ( y),$$    
which leads to  the  loss  function $R^L: \Hh \times \Pp(\R, dy)\to \R$,  and  hence,  for  any $\mu \in \Pp (\R, dy)$,  to   the expected  loss  function
$$R^L_{\mu}:  \Hh\to \R,  f dy \mapsto  -\int _\R \log f  d\mu.$$
Now we write $\mu= p dy$  where $p \in \Ll^1 (\R, dy)$.
By the Bretagnolle-Huber  inequality, given  in the  following  form  \cite[(1.11), p.30]{Vapnik1998}:
	$$\int_\R  | f(y) - p(y)| dy  \le 2  \sqrt{ 1 - \exp ( R^L_{\mu}(\mu) - R^L_{\mu}(fdy))}, $$
the   function  $R^L: \Hh \times \Pp(\R, dy) \to \R$ is a $\Pp(\R,dy)$-correct   loss  function.

\	
	
 (2)  Let $(\Xx, \Yy, \Hh, \Pp_{\Xx \times \Yy})$ be an arbitrary  {\it geometric  model  of  superviseed learning},  i.ee  a quadruple   underlying a   supervised  learning model $(\Xx, \Yy, \Hh, L, \Pp_{\Xx\times \Yy})$.  We  defines a loss  function: $R^{(k)}: \Hh \times \Pp_{\Xx \times \Yy}  \to \R_{\ge 0}$ as  follows
\begin{equation}\label{eq:loss1}
R^{(k)} (h, \mu): = \|(\Gamma_{ h})_*  \mu_\Xx - \mu \|^k _{TV}.
\end{equation}
By Theorem \ref{thm:marginal} (1), $R$  is a correct loss  function for any $k \in \N^+$.

Now assume  that $\mathfrak M: \Pp(\Xx \times \Yy) \to  E$ is an  embedding from   $\Pp  (\Xx \times \Yy)$ to   a metric vector  space $(E, d)$.  By  Theorem \ref{thm:marginal}(1),  the  function  
\begin{equation}\label{eq:losstwd}
R^d: \Hh \times \Pp_{\Xx \times \Yy} \to \R_{\ge 0},\,  (h, \mu)\mapsto  d\Big(\mathfrak M\big ((\Gamma_{ h})_*  \mu_\Xx\big), \mathfrak M(\mu)\Big ),
\end{equation}
 is a correct  loss  function, since $\mathfrak M$  is injective.

(3) Assume  that  $\Yy = \{ \om_1, \ldots , \om_n\}$ is a  finite  sample space consisting  of  $n$  elements,  and  $\Xx \subset \R^n$. Let $h \in \mathbf{Probm} (\Xx, \Yy)$.  In this case the  equation  $(\Gamma_h)_*\mu_\Xx = \mu$ is equivalent  to the following   equation:
\begin{equation}\label{eq:vapnik120}
\int_{-\infty} ^a \overline h (x) (\om_i) dF_{\mu_\Xx}(x) =  F_\mu (\om_i, a) \text{  for } i\in [1,n] \text{  and }   a\in \R^n
\end{equation}
where  $F_{\mu_\Xx}$ and   $F_{\mu}$ are  cummulative distribution function of $\mu_\Xx$ and  of $\mu$, respectively,  and the integral in the LHS of \eqref{eq:vapnik120} is the Lebesgue-Stieltjets integral. Note  that the LHS and RHS of  \eqref{eq:vapnik120}  is well-defined, if   $\mu$, and hence $\mu_\Xx$, are  empirical  measures. That is   Vapnik's equation  (1.20) in \cite[p. 36]{Vapnik1998} for   conditional  probability.
Assuming further that $F_\mu$ belongs to a metric  space  $(E, d)$,  we obtain  the following   correct loss  function:  
\begin{equation}\label{eq:lossd1}
	R:   \mathbf{Probm} (\Xx, \Yy)  \times \Pp(\Xx \times \Yy)  \to  \R_{\ge 0}, R(h, \mu) =  d( F_{ (\Gamma _{ h})_*\mu_\Xx}, F_\mu).
\end{equation}
 
(4) Assume  that $\Xx = \R^m$ and   $\Yy  = \R^n$. Let  $dy$ denote the Lebesgue  measure on $\R^n$  and
 $\Hh \subset  \mathbf{Meas}(\R^m, \Pp(\R^n, dy))$. For  $h \in \Hh$  we write
 	$$h (x): = \bar h (\cdot |x ) dy \in  \Pp (\R^n),$$
 where the density  function  $\bar h (\cdot |x)$ belongs  to $ \Ll^1  (\R^n, dy)$ for all $x \in \R^m$. By Tonelli's  theorem,  $\bar h \in \Ll ^1 (\R^m\times \R^n, dx dy)$,   where
 $dx$ is the  Lebesgue measure on $\R^m$s. Let $\mu \in \Pp(\R^m \times \R^n, dx dy)$.
 By \eqref{eq:corollary51}, $\mu_\Xx \in \Pp (\R^m, dx)$. 
 By Theorem \ref{thm:marginal}(5),  we have 
 \begin{equation}\label{eq:marginalr}(\Gamma_{h})_* (\mu_\Xx) \in \Ss (\R^m \times \R^n, dx dy ).
 \end{equation}

Now  we  shall   rewrite Equation \eqref{eq:graph},  using  cumulative   distribution functions as in Example \ref{ex:densitycorrect}(3), so   that the loss  function $R_\mu:\Hh \to \R$ is  also well-defined  for any $\mu \in \Pp_{emp} (\Xx \times \Yy)$.   By  Theorem \ref{thm:marginal} (1),   $\bar h$  is a    conditional density function for
 $\mu \in \ \Pp (\R^m\times \R^n)$, if  and only  if   for  any $(a, b)\in \R^m \times \R^n$ we have
 
 \begin{equation}\label{eq:graph1}
 (\Gamma_h)_* \mu_\Xx ((-\infty, a)\times (-\infty, b)) = F_\mu (a, b).
 \end{equation}
 Equivalently,
\begin{equation}\label{eq:vapnikp}
\int_{-\infty} ^b \int _{-\infty}^a \bar h d F_{\mu_\Xx} d  y =  F_\mu(a,b).
\end{equation}
This is Equation (1.21) in \cite[p. 37]{Vapnik1998}.  Assuming further that $F_\mu$ belongs to a metric  space  $(E, d)$,  we obtain  the following   $ \Pp(\R^m  \times\R^n, dxdy)$-correct loss  function, cf. \eqref{eq:lossd1},  
\begin{align} R:   \mathbf{Probm} (\Xx, \Yy)  \times \big( \Pp(\R^m  \times\R^n, dxdy)\cup \Pp_{emp} (\R^m \times \R^n)\big)  \to  \R_{\ge 0},\nonumber\\
 R(h, \mu) =  d( F_{ (\Gamma _{ h})_*\mu_\Xx}, F_\mu).\label{eq:lossd2}
 \end{align}
 
 (5) Let $d: \Pp (\Xx \times \Yy) \times \Pp (\Xx, \Yy)\to \R_{\ge 0} \cup \{ \infty\}$ be an arbitrary divergence  on $\Pp (\Xx \times \Yy)$. For instance,   $d$ is the   Kullback-Leibler divergence,    and if  $\Xx $ and $\Yy$ are  separable, metrizable  topological spaces   then   $d$    can be any   metric,  which generates
 the weak*-topology $\tau_w$  on $\Pp(\Xx \times \Yy)$. Then 
 \begin{equation}\label{eq:lossd3}
 R^d:   \mathbf{Probm} (\Xx, \Yy)  \times \Pp(\Xx \times \Yy)  \to  \R_{\ge 0}, R(h, \mu) =  d( (\Gamma _{ h})_*\mu_\Xx, \mu)
 \end{equation}
 is a   correct loss  function. This is a  general form of correct loss  functions  defined in Equations  \ref{eq:loss1}, \ref{eq:losstwd}, \ref{eq:lossd1}, \eqref{eq:lossd2}.
\end{examples}

\subsection{Inner and outer measure: preliminaries}\label{subs:ino} In this subsection we collect  necessary properties  of inner and  outer measure  which we shall need in this  article.

$\bullet$  Given a  (nonnegative) measure   $\mu$ on $\Xx$, we  denote by $\mu^*$  the outer measure defined  by $\mu$  and  by $\mu_*$  the inner  measure  defined  by $\mu$, i.e.  for any $S \subset \Xx$  we have \cite[p. 16, 56, 57, vol. 1]{Bogachev2007}:
\begin{eqnarray*}
\mu^* (S) = \inf \{ \mu (A):\,  S \subset A  \, , A \in \Sigma_\Xx\}\\
\mu_*(S) = \sup \{ \mu (A):\,   S \supset A \, ,  A \in \Sigma_\Xx\}.
\end{eqnarray*}
Then  we have \cite[p. 23, vol. 1]{Bogachev2007}:
\begin{equation}\label{eq:complement}
\mu^*(S) + \mu_* (\Xx\setminus S) = \mu (\Xx).
\end{equation}
$\bullet$ Monotonicity of inner and outer  measure \cite[p. 17, p. 70, vol. 1]{Bogachev2007}:  
\begin{equation*}
\mu_* (S_1)\le \mu_* (S_2) \text{ and } \mu^* (S_1)\le \mu^* (S_2) 
\text{ if } S_1 \subset  S_2.
\end{equation*}
$\bullet$ Countable subadditivity  of  outer measure \cite[(1.5.1), p. 17, vol. 1]{Bogachev2007}:
\begin{equation*}
\mu^* (\cup_{n=1}^\infty X_n)\le \sum_{n=1}^\infty \mu^* (S_n).
\end{equation*}

\begin{proposition}[Continuity from below  of outer measure]\label{prop:contb}\cite[Proposition 1.5.12, p. 23, vol. 1]{Bogachev2007}. Let $\mu$ be   a (nonnegative)  measure on $\Xx$. Suppose that   the set $S_n $ are such that  $S_n \subset S_{n+1}$  for all $n \in \N$. Then one has
\begin{equation}\label{eq:contb}
\mu^* \Big(\bigcup_{n =1} ^\infty S_n\Big ) = \lim_{n \to \infty} \mu^* (S_n).
\end{equation}
\end{proposition}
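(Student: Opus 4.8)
The plan is to prove the two inequalities $\mu^*\big(\bigcup_n S_n\big) \ge \lim_n \mu^*(S_n)$ and $\mu^*\big(\bigcup_n S_n\big) \le \lim_n \mu^*(S_n)$ separately, the first by monotonicity alone and the second by passing to measurable envelopes.

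The inequality ``$\ge$'' is immediate: since $S_n \subset \bigcup_m S_m$ for every $n$, monotonicity of outer measure gives $\mu^*(S_n) \le \mu^*\big(\bigcup_m S_m\big)$; and because $\{S_n\}$ is increasing, $\{\mu^*(S_n)\}$ is a nondecreasing sequence in $[0,\infty]$, so its limit exists and is bounded above by $\mu^*\big(\bigcup_m S_m\big)$.

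For ``$\le$'' I would first dispose of the trivial case: if $\mu^*(S_{n_0}) = \infty$ for some $n_0$, then by monotonicity both sides equal $+\infty$, so assume $\mu^*(S_n) < \infty$ for all $n$. For each $n$ pick measurable sets $A_n^{(k)} \supset S_n$ with $\mu(A_n^{(k)}) \le \mu^*(S_n) + 1/k$ and put $A_n := \bigcap_{k\ge 1} A_n^{(k)} \in \Sigma_\Xx$; then $S_n \subset A_n$ and $\mu^*(S_n) \le \mu(A_n) \le \mu(A_n^{(k)}) \le \mu^*(S_n) + 1/k$ for all $k$, hence $\mu(A_n) = \mu^*(S_n)$. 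The key step is now to monotonize these envelopes: set $B_n := \bigcap_{m \ge n} A_m$. Since the $S_m$ increase, $S_n \subset S_m \subset A_m$ for every $m \ge n$, so $S_n \subset B_n$; clearly $B_n \in \Sigma_\Xx$ and $B_n \subset B_{n+1}$; and $S_n \subset B_n \subset A_n$ forces $\mu^*(S_n) \le \mu(B_n) \le \mu(A_n) = \mu^*(S_n)$, i.e. $\mu(B_n) = \mu^*(S_n)$.

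Finally, $\bigcup_n S_n \subset \bigcup_n B_n$ with $\bigcup_n B_n \in \Sigma_\Xx$, so monotonicity of $\mu^*$ together with the ordinary continuity from below of $\mu$ on the increasing measurable sequence $\{B_n\}$ yields
\[
\mu^*\Big(\bigcup_n S_n\Big) \;\le\; \mu\Big(\bigcup_n B_n\Big) \;=\; \lim_{n\to\infty} \mu(B_n) \;=\; \lim_{n\to\infty}\mu^*(S_n),
\]
which combined with the first inequality gives the asserted equality. The only point requiring care is the construction and monotonization $A_n \mapsto B_n$ of the measurable envelopes; the rest is monotonicity of outer measure and $\sigma$-additivity of $\mu$.
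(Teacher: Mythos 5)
Your proof is correct. The paper does not prove this proposition itself --- it is quoted verbatim from Bogachev (Proposition 1.5.12, vol.~1), so there is no in-text argument to compare against; your argument is the standard one for an outer measure of the form $\mu^*(S)=\inf\{\mu(A): S\subset A,\ A\in\Sigma_\Xx\}$, and every step checks out: the measurable envelopes $A_n$ exist because $\Xx$ itself is an admissible cover and the infimum can be approximated within $1/k$; the monotonized envelopes $B_n=\bigcap_{m\ge n}A_m$ still satisfy $S_n\subset B_n$ and $\mu(B_n)=\mu^*(S_n)$ by the squeeze $S_n\subset B_n\subset A_n$; and the conclusion follows from ordinary continuity from below of $\mu$ on the increasing measurable sequence $\{B_n\}$ together with monotonicity of $\mu^*$. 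The reduction of the infinite case and the observation that $\mu^*(S_n)$ is nondecreasing (so the limit exists in $[0,\infty]$) are both handled properly.
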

$\bullet$ For every  decreasing  sequence  $S_1 \supset S_2 \ldots \supset S_n$  such that $\mu_* (S_1) < \infty$ we have \cite[p. 70, vol. 1]{Bogachev2007}
\begin{equation}\label{eq:contu}
\mu_* \Big(\bigcap_{n =1} ^\infty S_n\Big) = \lim_{n \to \infty} \mu_* (S_n).
\end{equation}

\begin{proposition}\label{prop:intero}\cite[Proposition 1.5.11, p. 22]{Bogachev2007}  If $A \in \Sigma _\Xx$  then
\begin{equation}\label{eq:intero}
\mu^*(S \cap A)  + \mu  ^* (S \setminus A) = \mu ^* (S) \text{ for all } S \subset \Xx.
\end{equation}
\end{proposition}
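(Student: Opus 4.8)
The plan is to establish the two inequalities $\mu^*(S) \le \mu^*(S\cap A) + \mu^*(S\setminus A)$ and $\mu^*(S) \ge \mu^*(S\cap A) + \mu^*(S\setminus A)$ separately. The first is immediate from what has already been recalled: since $S = (S\cap A)\cup(S\setminus A)$, finite subadditivity of the outer measure (the two-term case of the countable subadditivity property stated above) gives $\mu^*(S) \le \mu^*(S\cap A) + \mu^*(S\setminus A)$. If $\mu^*(S) = \infty$ the reverse inequality is trivial, so I may assume $\mu^*(S) < \infty$.

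For the reverse inequality I would work directly from the definition of $\mu^*$ as an infimum over measurable supersets. Let $B \in \Sigma_\Xx$ be any measurable set with $S \subset B$ (such $B$ exists precisely because $\mu^*(S) < \infty$). Since $A \in \Sigma_\Xx$, both $B\cap A$ and $B\setminus A$ belong to $\Sigma_\Xx$; they are disjoint with union $B$, so additivity of $\mu$ gives $\mu(B) = \mu(B\cap A) + \mu(B\setminus A)$. Moreover $S\cap A \subset B\cap A$ and $S\setminus A \subset B\setminus A$, so by the definition of the outer measure $\mu^*(S\cap A) \le \mu(B\cap A)$ and $\mu^*(S\setminus A) \le \mu(B\setminus A)$. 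Summing these two bounds yields $\mu^*(S\cap A) + \mu^*(S\setminus A) \le \mu(B)$.

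Finally, taking the infimum of the right-hand side over all measurable $B \supset S$ produces $\mu^*(S\cap A) + \mu^*(S\setminus A) \le \mu^*(S)$, which together with the first inequality gives the asserted equality. I do not expect any genuine obstacle; the only points that merit a word of care are the degenerate case $\mu^*(S) = \infty$ (disposed of above) and the implicit fact that the family $\{B \in \Sigma_\Xx : S \subset B\}$ over which the infimum is taken is nonempty when $\mu^*(S)$ is finite, which is exactly how the outer measure is defined. The essential mechanism is simply that a measurable set $A$ splits every measurable superset of $S$ additively, and then the two pieces dominate $\mu^*(S\cap A)$ and $\mu^*(S\setminus A)$ respectively.
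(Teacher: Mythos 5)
Your proof is correct. The paper does not prove this proposition at all---it simply cites it from Bogachev---and your argument is exactly the standard one behind that reference: subadditivity gives $\mu^*(S)\le\mu^*(S\cap A)+\mu^*(S\setminus A)$, and splitting each measurable superset $B\supset S$ additively along $A$ and taking the infimum gives the reverse inequality. The only cosmetic quibble is your parenthetical that a measurable superset of $S$ exists ``precisely because $\mu^*(S)<\infty$''; the family $\{B\in\Sigma_\Xx: S\subset B\}$ is always nonempty since it contains $\Xx$ itself, so no finiteness hypothesis is needed there (and hence the case split on $\mu^*(S)=\infty$ is not strictly necessary either, though it is harmless).
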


\begin{corollary}\label{cor:interi}  For any $S \subset \Xx$ and $A \in \Sigma_\Xx$ we have
\begin{equation}\label{eq:interi}
\mu_* (S \cap A) \ge \mu_* (S) -\mu (\Xx \setminus A).
\end{equation}
\end{corollary}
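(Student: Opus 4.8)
The plan is to derive the inequality \eqref{eq:interi} directly from Proposition \ref{prop:intero} by passing to complements, using the identity \eqref{eq:complement} that relates inner and outer measure. First I would fix an arbitrary $S \subset \Xx$ and $A \in \Sigma_\Xx$, and apply Proposition \ref{prop:intero} with the set $S$ replaced by its complement $\Xx \setminus S$ (note that $A$ stays in $\Sigma_\Xx$). This yields
\begin{equation*}
\mu^*\big((\Xx \setminus S) \cap A\big) + \mu^*\big((\Xx \setminus S) \setminus A\big) = \mu^*(\Xx \setminus S).
\end{equation*}

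Next I would rewrite each of the three terms using \eqref{eq:complement}, which says $\mu^*(T) = \mu(\Xx) - \mu_*(\Xx \setminus T)$ for any $T \subset \Xx$. The key set identities are $\Xx \setminus \big((\Xx\setminus S)\cap A\big) = S \cup (\Xx \setminus A)$ and $\Xx \setminus \big((\Xx\setminus S)\setminus A\big) = S \cup A$, while $\Xx \setminus (\Xx \setminus S) = S$. Substituting, the displayed equation becomes
\begin{equation*}
\big(\mu(\Xx) - \mu_*(S \cup (\Xx\setminus A))\big) + \big(\mu(\Xx) - \mu_*(S \cup A)\big) = \mu(\Xx) - \mu_*(S),
\end{equation*}
which rearranges to $\mu_*(S \cup (\Xx\setminus A)) + \mu_*(S \cup A) = \mu(\Xx) + \mu_*(S)$.

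Then I would estimate the term $\mu_*(S \cup (\Xx \setminus A))$ from above. Since $\mu_*$ is an inner measure it is subadditive on the relevant sets in the sense that $\mu_*(S \cup (\Xx\setminus A)) \le \mu_*(S) + \mu(\Xx \setminus A)$; here one can argue directly, e.g. using $\mu_*(S \cup (\Xx\setminus A)) \le \mu^*(S) + \mu(\Xx\setminus A)$ together with monotonicity, or more cleanly observe $\mu_*(S\cup(\Xx\setminus A)) - \mu(\Xx\setminus A) \le \mu_*(S)$ by approximating measurable subsets of $S \cup (\Xx\setminus A)$. Substituting this bound into the equality from the previous step gives $\mu(\Xx) + \mu_*(S) \le \mu_*(S) + \mu(\Xx\setminus A) + \mu_*(S\cup A)$, hence $\mu_*(S \cup A) \ge \mu(\Xx) - \mu(\Xx\setminus A) = \mu(A)$; this is not yet what we want, so instead I would apply the bound to $\mu_*(S\cup A)$ asymmetrically. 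Concretely, from $\mu_*(S\cup A) \le \mu_*(S \cap A) + \mu(A \setminus (S\cap A))$ — no: the cleanest route is to apply Proposition \ref{prop:intero} to $S$ itself, getting $\mu^*(S) = \mu^*(S\cap A) + \mu^*(S\setminus A)$, complement each term to turn it into $\mu(\Xx) - \mu_*(\Xx\setminus S)$, $\mu(\Xx) - \mu_*((\Xx\setminus S)\cup(\Xx\setminus A))$ and $\mu(\Xx)-\mu_*((\Xx\setminus S)\cup A)$, and then bound $\mu_*((\Xx\setminus S)\cup(\Xx\setminus A)) \le \mu_*(\Xx\setminus S) + \mu(\Xx\setminus A)$.

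The main obstacle I anticipate is purely bookkeeping: keeping the complements straight and making sure the one inequality used — monotone/subadditive behaviour of $\mu_*$ against adding a measurable set — is justified (it follows because any measurable $B \subset S \cup (\Xx\setminus A)$ satisfies $B \setminus (\Xx\setminus A) = B \cap A$ is measurable and contained in $S$, so $\mu(B) \le \mu_*(S) + \mu(\Xx\setminus A)$, taking the supremum over $B$). Once that is in hand, the final rearrangement immediately produces $\mu_*(S\cap A) \ge \mu_*(S) - \mu(\Xx\setminus A)$, completing the proof of Corollary \ref{cor:interi}. \QED
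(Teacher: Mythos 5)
Your plan has a genuine gap: neither of the two derivations you actually write down arrives at \eqref{eq:interi}, and the quantity $\mu_*(S\cap A)$ that Corollary \ref{cor:interi} bounds never appears in either chain. In the first route you correctly obtain $\mu_*(S\cup(\Xx\setminus A))+\mu_*(S\cup A)=\mu(\Xx)+\mu_*(S)$, and, as you note yourself, feeding in the bound $\mu_*(S\cup(\Xx\setminus A))\le\mu_*(S)+\mu(\Xx\setminus A)$ only yields the triviality $\mu_*(S\cup A)\ge\mu(A)$. In the second route, applying Proposition \ref{prop:intero} to $S$ and complementing produces an identity among $\mu_*(\Xx\setminus S)$, $\mu_*((\Xx\setminus S)\cup(\Xx\setminus A))$ and $\mu_*((\Xx\setminus S)\cup A)$; but by \eqref{eq:complement} these equal $\mu(\Xx)-\mu^*(S)$, $\mu(\Xx)-\mu^*(S\cap A)$ and $\mu(\Xx)-\mu^*(S\setminus A)$ respectively, so your proposed bound $\mu_*((\Xx\setminus S)\cup(\Xx\setminus A))\le\mu_*(\Xx\setminus S)+\mu(\Xx\setminus A)$ translates into $\mu^*(S\cap A)\ge\mu^*(S)-\mu(\Xx\setminus A)$ --- the \emph{outer}-measure analogue, which is true but is not the statement to be proved. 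The closing claim that ``the final rearrangement immediately produces'' the inner-measure inequality is therefore unsupported as written.

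The good news is that the auxiliary inequality you do justify correctly in your last paragraph, namely $\mu_*(T\cup N)\le\mu_*(T)+\mu(N)$ for arbitrary $T\subset\Xx$ and measurable $N$ (via $\mu(B)=\mu(B\setminus N)+\mu(B\cap N)$ for measurable $B\subset T\cup N$, with $B\setminus N\subset T$), finishes the proof in one line once applied to the right sets: take $T=S\cap A$ and $N=\Xx\setminus A$, and combine with monotonicity of $\mu_*$ and the inclusion $S\subset(S\cap A)\cup(\Xx\setminus A)$ to get $\mu_*(S)\le\mu_*\big((S\cap A)\cup(\Xx\setminus A)\big)\le\mu_*(S\cap A)+\mu(\Xx\setminus A)$, which is \eqref{eq:interi}. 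For comparison, the paper's proof is shorter still and needs no auxiliary lemma: by \eqref{eq:complement} the claim is equivalent to $\mu^*\big((\Xx\setminus S)\cup(\Xx\setminus A)\big)\le\mu^*(\Xx\setminus S)+\mu(\Xx\setminus A)$, which is exactly subadditivity of the outer measure, the second summand being $\mu^*(\Xx\setminus A)=\mu(\Xx\setminus A)$ because $\Xx\setminus A$ is measurable.
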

\begin{proof}
Using  \eqref{eq:complement}, the validity  of  inequality  \eqref{eq:interi} is equivalent  to  validity  of  the following
\begin{eqnarray*}
\mu^* (\Xx\setminus (S\cap A)) \le  \mu^* (\Xx\setminus S) + \mu (\Xx \setminus A)\\
\LLR  \mu ^* \big ((\Xx \setminus  S) \cup  (\Xx \setminus A)\big)\le  \mu^* (\Xx\setminus S) + \mu (\Xx\setminus A),
\end{eqnarray*}
which holds   because of the subadditivity of  outer measure.
\end{proof}
\subsection{Consistency of a learning  algorithm}\label{subs:aerm}

The concept of a  generative  model of supervised  learning is a particular case  of the  concept  of a    statistical learning model  defined below.

\begin{definition}\label{def:unifiedl}  A   {\it statistical  learning  model}  consists  of  a quadruple  $(\Zz, \Hh, R, \Pp_\Zz)$  where  $\Zz$ is  a measurable space,  $\Hh$  is a decision space  containing all  possible  decisions we   have to find  based on a sequence of  observables  $(z_1, \ldots,   z_n) \in \Zz^n$, $\Pp_\Zz \subset \Pp(\Zz)$ is a     statistical model  that  contains all  possible  probability measures  $\mu$  on  $\Zz$ that govern   the distribution of an  i.i.d.   sample of data  $z_1, \ldots, z_n\in \Zz$, $R:  \Hh \times \big (\Pp_\Zz\cup\Pp_{emp}(\Zz)\big) \to  \R  \cup \{+ \infty\}$ is a loss  function  such that $\inf_{h \in \Hh} R (h, \mu) \not = \pm \infty$ for any $\mu \in \Pp_{\Xx \times \Yy} \cup \Pp_{emp} (\Xx \times \Yy)$. 
We say that  $R$ is generated    by  an    instantaneous loss function  $L:\Zz \times  \Hh\to \R$  if
	$R (h, \mu) = R^L_\mu (h) : = \E_\mu L(z, h)$.
	In this case  we shall write  $R^L$ instead  of $R$.
{\it A learning algorithm} is  a map
	$$A: \bigcup_{n =1}^\infty   \Zz^n \to  \Hh.$$
\end{definition}

Given a   statistical learning model $(\Zz, \Hh, R, \Pp_{\Zz})$   and $\mu \in \Pp_{\Zz}$, we set
\begin{equation}\label{eq:apperror}
R_{\mu, \Hh} : = \inf_{h \in \Hh}R_\mu  (h).
\end{equation}
For $h \in \Hh$  we denote its  {\it estimation  error}  as  follows:
\begin{equation}\label{eq:aprerr}
\Ee_{\Hh, R, \mu} (h): = R_\mu  (h) - R_{\mu, \Hh}.
\end{equation}

If $R = R^L$ we shall  write  $\Ee_{\Hh, L, \mu}$ instead  of 
$\Ee_{\Hh, R^L, \mu}$.

\begin{definition}\label{def:genbi}  A  statistical   learning  model
$(\Zz, \Hh, R, \Pp_\Zz)$  will be  said  to have  a {\it generalization  ability} or it  will be   called  {\it learnable}, if  there   exists  a  uniformly  consistent learning algorithm  
$$ A: \bigcup_{n=1}^\infty \Zz^n \to  \Hh, $$
i.e.  for any  $(\eps, \delta)\in (0,1)^2$  there  exists   a number $m_A  (\eps, \delta)$ such  that   for any $m \ge m_A (\eps, \delta)$ and any $\mu \in \Pp_\Zz$ we have
\begin{equation}\label{eq:pac}
(\mu^n)_*  \{ S \in \Zz^n, \Ee_{\Hh, R, \mu} (A(S)) \le \eps\} \ge 1- \delta.
\end{equation}
In this  case  $A$  will be called {\it uniformly consistent}.
\end{definition}

\begin{remark}\label{rem:genbi}  The   current  definition of     a uniform consistency, also called  a generalizability, of  a learning algorithm $A$ in literature  is   almost  identical  to our definition  but    the inner measure  $(\mu^n)_* $ is replaced  by   $\mu^n$. Equivalently, in Definition \ref{def:genbi}, we relax the  convergence in probability in the classical  requirement   by the convergence   in outer probability  for  the sequence  of functions $\Ee_{\Hh, R, \mu} \circ A$. Note that  convergence is  outer probability has been  used  in  empirical   processes  \cite[\S. 3.3]{Dudley2014}, \cite[\S 1.9]{VW1996}.   The classical requirement  poses  the following  condition   on  a learning algorithm  $A$: for  any $\mu \in \Pp_\Zz$  and any $n\in \N^+$  the    function  $\Ee_{\Hh, R, \mu} \circ A: \Zz^n \to \R$  is  $\mu ^n$-measurable. Equivalently, for any $\mu \in \Pp_\Zz$  and any $n\in \N^+$ the function $R_\mu\circ A: \Zz^n \to \R$  is  $\mu ^n$-measurable. Many  algorithms  in   statistics  and statistic learning  theory  may not be measurable, see  e.g. \cite{BP1973}.
\end{remark}

Given  a     sequence  of  data $S\in \Zz^n$ we define  the  empirical risk
$$\widehat  R_S: \Hh \to  \R, \, h \mapsto   R_{\mu_S} (h). $$
\begin{definition}\label{def:aserm} Given a   statistical model model  $(\Zz, \Hh,R, \Pp_{\Zz} )$ and a sequence   $C = (c_1\ge  \ldots\ge  c_n\ge  \ldots: \, c_i \ge 0)$,    a learning  algorithm
$$ A: \bigcup _{n \in \N} \Zz^n \to  \Hh $$
will be called  a {\it $C$-empirical  risk minimizing} algorithm, abbreviated  as  $C$-ERM algorithm,  if 	 for  any $n \in \N$  and any $S \in \Zz^n$, we have
$$\widehat R_{S} (A (S))  - \inf _{ h\in  \Hh}R_S (h) \le c_n. $$
If  $c_i = 0$ for   all $i$  we   write $C= 0$,
\end{definition}

Clearly, a $0$-ERM  algorithm is an  ERM  algorithm.

\begin{remark}\label{rem:cerm}  The concept  of a $C$-ERM algorithm is motivated  by  the fact  that, given a  sequence $C  = (c_1\ge c_2 \ge \ldots, : c_i >0)$, a  $C$-ERM algorithm    always  exists   and  an ERM-algorithm (a $0$-ERM algorithm)  may not exist. Furthermore,       with fine  tuning,   a gradient   flow       could   yield  a $C$-ERM algorithm.  Examples  of 
$C$-ERM  are  solutions of  regularized   ERM  with  parameter  $c_n$ for $S \in \Z^n$, i.e.
$$A(S)  \in \arg \min_{h \in \Hh} (\hat R_{S} (h) + c_n W(h))$$
 if    a solution of this equation  exists and  if we  know   that $W(h)\in [0,C]$ for all  $h \in \Hh$ where $C <\infty$.
\end{remark}

\begin{theorem}\label{thm:uniform} Let $(\Zz, \Hh, R, \Pp_\Zz)$ be a statistical learning model. 
 Assume  that    there  exists
	a function $m_{\Hh, R, \Pp_{\Zz}}:  (0, 1) ^2  \to \R_{+}$  such that  for any
	$(\eps, \delta) \in (0,1)^2$, any $ n \ge m_{\Hh, R, \Pp_{\Zz}} (\eps, \delta)$  and  any $\mu \in \Pp_{\Zz}$  we have
	\begin{equation}\label{eq:uniconverge}
	(\mu ^n)_* \{S\in  \Zz^n:\, \sup_{h \in \Hh} |\hat R_S  (h)  - R_\mu (h) | \le \eps  \}\ge 1- \delta.
	\end{equation}
Given any sequence $C =(c_1,\ldots  c_m\ldots |\, c_i \ge 0)$,   and  a $C$-ERM algorithm $A: \cup _{n \in \N^+} \Zz^n  \to \Hh$,   for any $m \ge m_{\Hh, R, \Pp_{\Zz}}(\eps, \delta)$  any $\mu \in \Pp_{\Zz}$  we have
	\begin{equation}\label{eq:uniformerm}
	(\mu ^m)_* \{ S \in  \Zz^m:\, \Ee_{\Hh, L, \mu} (A (S)) \le 2 \eps + c_m \} \ge  1 -\delta.
	\end{equation}
	Consequently,   $A$   is  a uniformly  consistent  algorithm, if  $\lim_{n \to \infty} c_n = 0$.
\end{theorem}
\begin{proof} 
Assume the condition \eqref{eq:uniconverge} of  Theorem \ref{thm:uniform}. Let $\mu \in \Pp_{\Zz}$ and $(\eps, \delta)\in (0,1)$.
	Then for any  $m \ge   m_{\Hh, R,  \Pp_{\Zz}} (\eps, \delta)$  we have
	\begin{equation}\label{eq:2cs1}
	(\mu ^m)_* \{ S\in \Zz^m :\, R_\mu (A(S)) \le \hat R_S  (A(S)) + \eps \} \ge  1-\delta.
	\end{equation}
	Given $\theta > 0$ let $h_\theta \in \Hh$  be   such  that
	\begin{equation}\label{eq:theta}
	R_\mu   (h_\theta) - R_{\mu,\Hh} \le \theta .
	\end{equation}
By \eqref{eq:uniconverge},  for any  $m \ge   m_{\Hh, R, \Pp_{\Zz}}(\eps, \delta)$,  we   have 
	\begin{equation}\label{eq:2cs2}
\mu^m \{ S\in \Zz^m: \, \hat R_S (h_\theta) \le R_\mu (h_\theta)  + \eps\} \ge    1-\delta .
	\end{equation}
	Since $A$  is  a $C$-ERM,    we have
	\begin{equation}\label{eq:2cs3}
	\forall  S \in \Zz^m: \, \hat R_S (A(S)) \le \hat R_S  (h_\theta) + \theta +  c_m .
	\end{equation}
	Taking  into account  \eqref{eq:2cs1}, \eqref{eq:2cs2},  \eqref{eq:2cs3}, we obtain
	\begin{eqnarray}\label{eq:2cs4}
	(\mu^m)_*\{ S \in  \Zz^m : \, R_\mu  (A(S))\le \hat R_S (A(S)) + \eps  \le \hat R_S (h_\theta) + \eps + \theta +c_m \le\nonumber\\
	 R_\mu (h_\theta) + 2\eps + \theta +c_m\} \ge \nonumber\\
\ge (\mu^m)_*  \{ S\in \Zz^m: \sup_{h\in \Hh} | \hat R_S -R_\mu (h)| \le \eps\}\nonumber \\ \ge   1-\delta.
	\end{eqnarray}
Letting $\theta $ go to zero   and taking into account \eqref{eq:contu}, we  obtain \eqref{eq:uniformerm} from \eqref{eq:2cs4}  immediately.
\end{proof}

\begin{definition}\label{def:samplecompl}  The function $m_{\Hh, R,  \Pp_\Zz}: (0,1) ^2 \to \R$  defined  by the requirement  that $m_{\Hh, R, \Pp_\Zz} (\eps, \delta)$ is the  least number   for  which  \eqref{eq:uniconverge} holds for all $\mu \in \Pp (\Zz)$ is called {\it the  sample complexity of  a statistical learning model $(\Zz, \Hh, R, \Pp_\Zz)$}.  If $R  = R^L$  we shall write $m_{\Hh, L,  \Pp_\Zz}$ instead of $m_{\Hh, R,  \Pp_\Zz}$.   If $\Pp_\Zz = \Pp (\Zz)$ then we  shall  use	the shorthand notation  $m_{\Hh,R}$  for $m_{\Hh,R, \Pp(\Zz)}$.
\end{definition}
\begin{remark}\label{rem:csl2}   Theorem \ref{thm:uniform} is a generalization  of Cucker-Smale's result   \cite[Lemma 2]{CS2001}. 
There   are many generalizations  of  \cite[Lemma 2]{CS2001} in  textbooks on machine learning, where    authors  did not consider    inner/outer measures and  take uncountable intersection  of  measurable     sets in \eqref{eq:uniconverge}. 
\end{remark}

\section{Regular conditional  measures   via  kernel mean  embedding}\label{sec:loss}
In this section   we assume  that  $K: \Yy \times \Yy  \to \R$  is  a  measurable  positive  definite symmetric (PDS)  kernel on  a measurable space $\Yy$.  For $y \in \Yy$ let  $K_y$ be the function on $\Yy$ defined by
$$K_y (y') = K(y, y') \text { for } y' \in \Yy.$$
We denote by $\Hh(K)$ the  associated  RKHS \cite{Aronszajn50}, see also \cite{BT2004}, \cite{SC2008}, i.e.,
$$ \Hh (K)  =\overline{{\rm span}}\{ K_y, y \in \Yy\},$$
where the closure  is taken with respect  to the $\Hh (K)$-norm defined  by
$$\la  K_y, K_{y'} \ra_{\Hh (K)} =  K (y, y').$$
Then for any $f \in \Hh (K)$ we have
\begin{equation}\label{eq:inner1}
f (y) = \la f, K_y \ra _{\Hh (K)}.
\end{equation}
First, we  summarize  known results  concerning  kernel  mean embeddings  $\mathfrak M_K: \Ss (\Yy) \to \Hh (K)$ and their direct  consequences,  which we shall need  in    our paper.  Using    results  concerning  probabilistic morphisms obtained  in    section  \ref{sec:unified}, we study  measurability  and continuity of kernel  mean embeddings.
Then   we give a characterization  of   regular conditional probability measure as  a minimizer of  mean square error  (Theorem  \ref{thm:errorRKHS}). We show that  the constructed mean square error is   a  correct loss function  (Corollary  \ref{cor:correct})). Finally, we     present  examples   of   (correct)  loss functions obtained  using  this  characterization, among them  there are   the 0-1  loss  function and the  mean square  error (Examples \ref{ex:kernel}).

\subsection{Kernel mean embeddings: preliminaries} \label{subs:not2}

By the Bochner theorem \cite{Bochner1933},  see also \cite[Theorem 1, p. 133]{Yosida1995},  $\int_\Yy \sqrt{K(y,y)}\, d\mu (y) < \infty$  for  $\mu \in \Pp (\Yy)$  if and only  if the   {\it kernel  mean embedding $\mathfrak M_K (\mu) $ of $\mu$}  via the  Bochner integral is well-defined  \cite{BT2004},  where
\begin{equation}\label{eq:kme1}
\mathfrak M_K (\mu)  =   \int_\Yy  K_y d\mu (y) \in \Hh(K).
\end{equation}
By \eqref{eq:inner1},  if $\mathfrak M_K (\mu) $ is well-defined, for any $f \in \Hh (K)$ we have
\begin{equation}\label{eq:inner2}
\int_\Xx f (x) d\mu (x) = \la \mathfrak M_K (\mu), f \ra_{\Hh (K)}.
\end{equation}

$\bullet$  For a  Banach space $V$, $\tau_s$  denotes the strong  topology on $V$   and   the induced  topology on its  subsets,  and  $\tau_W$  denotes the weak topology on $V$  and   the induced  topology on its  subsets.

$\bullet$ For  a locally compact  Hausdorff topological    space $\Yy$  we denote  by $C_0 (\Yy)$  the set of all continuous functions   which vanish  at infinity, i.e., for  any $\eps > 0$ the set $\{ x\in \Xx:  f(x)\le  \eps\}$ is compact.

\begin{proposition}\label{prop:srithm32}\cite[Theorem 3.2]{Sriperumbudur16}  Assume that  $\Yy$ is a Polish space that is locally compact Hausdorff.
Assume that there  exists   a continuous   bounded  kernel  $K: \Yy \times \Yy \to \R$  such that the following  conditions  hold.\\
(1)	 $K_y \in C_0 (\Yy)$ for all $ y\in \Yy$.\\
(2)  For  any $\mu \in \Ss (\Yy)$ we have
\begin{equation}\label{eq:inj}
\int_\Yy\int_\Yy K (y, y') d\mu (y)d\mu (y') > 0  \text{ for all } \mu \in \Ss (\Yy)\setminus  \{0\}.
\end{equation}
In other  words,  $\mathfrak M_K: \Ss (\Yy) \to \Hh (K)$ is injective.\\
(3) $K$   satisfies  the following   property  (P)
$$\forall y \in \Yy, \,  \forall  \eps  > 0, \, \exists  \text{ open } U_{y, \eps}\subset \Yy  \text{ such that } \| K_{y} -K_{y'}\|_{\Hh (K)} < \eps,   \,\forall  y' \in U_{y, \eps}.   $$
Then the  induced    topology  $\mathfrak M_K ^* (\tau_s)$  is the  weak*  topology $\tau_w$ on $\Pp (\Yy)$.  In particular, the kernel  mean  embedding $\mathfrak M_ K: (\Pp (\Yy), \tau_w) \to (\Hh (\Yy),\tau_s), \, \mu \mapsto \mathfrak M_K (\mu),$ is  continuous.
\end{proposition}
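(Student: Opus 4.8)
The plan is to prove the stronger statement that $\mathfrak M_K$ restricts to a homeomorphism of $(\Pp(\Yy),\tau_w)$ onto its image equipped with the norm topology $\tau_s$ of $\Hh(K)$; since $\mathfrak M_K$ is injective on $\Ss(\Yy)$ by (2), this is precisely the identity $\mathfrak M_K^*(\tau_s)=\tau_w$, and the inclusion $\mathfrak M_K^*(\tau_s)\subseteq\tau_w$ is exactly the ``in particular'' clause. As $\Yy$ is Polish, $(\Pp(\Yy),\tau_w)$ is metrizable, so I argue with sequences on both sides. First I record preliminaries. Since $K$ is bounded, $c_K:=\sup_y\sqrt{K(y,y)}<\infty$, so $\mathfrak M_K(\mu)=\int_\Yy K_y\,d\mu(y)$ is a well-defined Bochner integral for every $\mu\in\Ss(\Yy)$ — property (P) makes $y\mapsto K_y$ continuous, hence Borel, which is what the Bochner construction needs — and one has the identity $\|\mathfrak M_K(\mu)-\mathfrak M_K(\nu)\|_{\Hh(K)}^2=\int_\Yy\int_\Yy K(y,y')\,d(\mu-\nu)(y)\,d(\mu-\nu)(y')$. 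Next, from $|f(y)|=|\langle f,K_y\rangle_{\Hh(K)}|\le c_K\|f\|_{\Hh(K)}$ and $K_y\in C_0(\Yy)$ (hypothesis (1)), every $f\in\Hh(K)$ is a uniform limit of finite linear combinations of the $K_y$, hence lies in $C_0(\Yy)$; so the inclusion $\iota\colon\Hh(K)\hookrightarrow C_0(\Yy)$ is continuous. Identifying $C_0(\Yy)^*$ with $\Ss(\Yy)$ (Riesz--Markov, legitimate because finite Borel measures on a Polish space are Radon) and $\Hh(K)^*$ with $\Hh(K)$, one checks from \eqref{eq:inner2} that $\mathfrak M_K=\iota^*$; hence hypothesis (2), i.e. injectivity of $\mathfrak M_K$ on $\Ss(\Yy)$, is equivalent to $\iota$ having dense range, i.e. to $\Hh(K)$ being dense in $C_0(\Yy)$.

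For the inclusion $\mathfrak M_K^*(\tau_s)\subseteq\tau_w$, that is $\tau_w$-$\tau_s$ continuity of $\mathfrak M_K$, I would take $\mu_n\to\mu$ in $(\Pp(\Yy),\tau_w)$ and use that narrow convergence is stable under products on metric spaces, so $\mu_n\otimes\mu_n\to\mu\otimes\mu$ in $(\Pp(\Yy\times\Yy),\tau_w)$; since $K$ is bounded and continuous on $\Yy\times\Yy$, this gives $\int\int K\,d(\mu_n\otimes\mu_n)\to\int\int K\,d(\mu\otimes\mu)$ and likewise $\int\int K\,d(\mu_n\otimes\mu)\to\int\int K\,d(\mu\otimes\mu)$, whence by the displayed maximum-mean-discrepancy identity $\|\mathfrak M_K(\mu_n)-\mathfrak M_K(\mu)\|_{\Hh(K)}^2\to0$. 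This half uses nothing beyond continuity and boundedness of $K$.

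For the reverse inclusion $\tau_w\subseteq\mathfrak M_K^*(\tau_s)$ it suffices, by injectivity of $\mathfrak M_K$ and metrizability of both sides, to show that $\mathfrak M_K(\mu_n)\to\mathfrak M_K(\mu)$ in $\tau_s$ forces $\mu_n\to\mu$ in $\tau_w$ (for $\mu_n,\mu\in\Pp(\Yy)$). Norm convergence in $\Hh(K)$ yields $\int_\Yy f\,d\mu_n=\langle\mathfrak M_K(\mu_n),f\rangle_{\Hh(K)}\to\langle\mathfrak M_K(\mu),f\rangle_{\Hh(K)}=\int_\Yy f\,d\mu$ for every $f\in\Hh(K)$. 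Since $\Hh(K)$ is dense in $C_0(\Yy)$ by the preliminary step and all measures involved have total variation $1$, a routine approximation argument extends this to $\int_\Yy f\,d\mu_n\to\int_\Yy f\,d\mu$ for every $f\in C_0(\Yy)$, i.e. $\mu_n\to\mu$ vaguely; and on the locally compact space $\Yy$, vague convergence of probability measures together with convergence of total masses (here trivially $1\to 1$) implies narrow convergence, so $\mu_n\to\mu$ in $\tau_w$. This is the continuity of $\mathfrak M_K^{-1}$ on the image and gives $\tau_w\subseteq\mathfrak M_K^*(\tau_s)$; combining the two inclusions yields $\mathfrak M_K^*(\tau_s)=\tau_w$.

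I expect the genuine difficulty to be concentrated in the last paragraph, in two places. The first is the duality translation of the ``integrally strictly positive definite'' hypothesis (2) into density of $\Hh(K)$ in $C_0(\Yy)$ — this is where (1), (2) and (P) are actually consumed, via the continuous dense-range inclusion $\Hh(K)\hookrightarrow C_0(\Yy)$ (so the kernel is $c_0$-universal). The second is the upgrade from vague to narrow convergence, which relies on local compactness of $\Yy$ and on the limit being a probability measure, so that no mass escapes to infinity. By contrast the continuity direction is essentially formal. One should also verify the standard measure-theoretic housekeeping — regularity of finite Borel measures on a Polish space, stability of narrow convergence under products, and Bochner-measurability of $y\mapsto K_y$ — but these are routine.
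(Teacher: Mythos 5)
The paper does not prove this proposition; it is imported verbatim as a citation of \cite[Theorem 3.2]{Sriperumbudur16}, so there is no in-paper argument to compare against. Your proof is correct and is essentially the standard argument behind the cited result: the hypotheses (1)--(3) make $K$ a $c_0$-universal kernel, i.e.\ the inclusion $\Hh(K)\hookrightarrow C_0(\Yy)$ is continuous with dense range, and under the identification $C_0(\Yy)^*\cong\Ss(\Yy)$ the kernel mean embedding is the adjoint of that inclusion; the forward (continuity) direction is the formal product-convergence computation with the MMD identity, and the reverse direction is the passage from convergence against $\Hh(K)$ to convergence against $C_0(\Yy)$ by density plus the uniform total-variation bound, followed by the vague-to-narrow upgrade, which is exactly where local compactness and the fact that the limit is a probability measure (no escape of mass) are consumed. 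Two small points you should make explicit if you write this up: property (P) as stated does not literally say $y\in U_{y,\eps}$, so you should record that it is being read as continuity of $y\mapsto K_y$ into $\Hh(K)_{\tau_s}$ (this, together with separability of $\Hh(K)$, is what legitimizes the Bochner integral); and the identification of the initial topology $\mathfrak M_K^*(\tau_s)$ with $\tau_w$ via two sequential-continuity statements is valid only because both $(\Pp(\Yy),\tau_w)$ and the image with the norm topology are metrizable, which you do invoke but only in passing. Neither is a gap.
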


\begin{proposition}\label{prop:lmst2015}\cite[Theorem 1]{LMST2015}  Let $K: \Yy \times \Yy \to \R$  be a measurable  kernel such  that  $\mathfrak M_K: \Ss(\Yy) \to \Hh (K)$ is well-defined.  Assume that  $\| f\| _\infty \le 1$  for all $f \in \Hh(K)$ with $\| f\|_{\Hh (K)} \le 1$. Then   for  any $\eps \in (0,1)$ we have
\begin{align}
\mu\Big\{ S_n \in \Yy^n:\, \| \mathfrak M_K (\mu_{S_n})-\mathfrak M_K(\mu)\| _{\Hh (K) }\le  2 \sqrt{ \frac{ \int_\Yy K (y, y) d\mu (y) }{n}}\nonumber\\ + \sqrt{ \frac {2 \log \frac{ 1}{\eps}}{n}}\Big\} \ge  1 -\eps.\label{eq:lmst2015}
\end{align}
\end{proposition}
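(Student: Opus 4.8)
The plan is to identify the left-hand side with the random variable $F(S) := \|\mathfrak M_K(\mu_S) - \mathfrak M_K(\mu)\|_{\Hh(K)}$, $S = (y_1,\dots,y_n) \in \Yy^n$, and to apply McDiarmid's bounded-differences inequality after controlling $\E_{S\sim\mu^n} F(S)$ by the first term on the right. First I would record that the hypothesis ``$\|f\|_\infty \le 1$ whenever $\|f\|_{\Hh(K)}\le 1$'' is equivalent to $\sup_{y\in\Yy} K(y,y) \le 1$: by the reproducing identity \eqref{eq:inner1}, taking $f = K_y/\|K_y\|_{\Hh(K)}$ yields $\sqrt{K(y,y)} = \|K_y\|_{\Hh(K)} \le 1$, while conversely $|f(y)| = |\la f, K_y\ra_{\Hh(K)}| \le \|f\|_{\Hh(K)}\sqrt{K(y,y)}$. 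Since $\mathfrak M_K$ is assumed well-defined, $\int_\Yy \sqrt{K(y,y)}\,d\mu(y) < \infty$ by the Bochner theorem recalled before \eqref{eq:kme1}, so every Bochner integral below converges; the map $S\mapsto\mu_S$ is measurable by Lemma \ref{lem:diag}(3), and writing $F(S) = \sup\{ |\tfrac1n\sum_{i=1}^n g(y_i) - \int_\Yy g\,d\mu| : \|g\|_{\Hh(K)}\le 1\}$ and reducing the supremum to a countable dense subset of the unit ball of $\Hh(K)$ shows that $F$ is measurable, so $\mu^n\{\cdots\}$ is literally a probability.

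For the expectation I would use the symmetrization (ghost-sample) argument. Let $S' = (y_1',\dots,y_n')$ be an independent copy of $S$. Since $\mathfrak M_K(\mu) = \E_{S'}\mathfrak M_K(\mu_{S'})$, Jensen's inequality gives $\E_S F(S) \le \E_{S,S'}\|\tfrac1n\sum_{i=1}^n (K_{y_i} - K_{y_i'})\|_{\Hh(K)}$; inserting Rademacher signs $\eps_i$ (legitimate, since $y_i\leftrightarrow y_i'$ preserves the joint law) and then applying the triangle inequality,
\[
\E_S F(S) \;\le\; \frac{2}{n}\,\E\Big\|\sum_{i=1}^n \eps_i K_{y_i}\Big\|_{\Hh(K)} \;\le\; \frac{2}{n}\Big(\sum_{i=1}^n \E\,K(y_i,y_i)\Big)^{1/2} \;=\; 2\sqrt{\frac{\int_\Yy K(y,y)\,d\mu(y)}{n}},
\]
where the middle inequality is Jensen applied to $x\mapsto\sqrt x$ and the cross terms drop out because $\E[\eps_i\eps_j]=0$ for $i\ne j$.

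Finally I would verify the bounded-differences property: if $S$ and $S^{(i)}$ differ only in the $i$-th entry ($y_i$ replaced by $\tilde y_i$), then by the reverse triangle inequality $|F(S) - F(S^{(i)})| \le \tfrac1n\|K_{y_i} - K_{\tilde y_i}\|_{\Hh(K)} \le \tfrac1n(\sqrt{K(y_i,y_i)} + \sqrt{K(\tilde y_i,\tilde y_i)}) \le \tfrac2n$. McDiarmid's inequality then gives, for every $t>0$, $\mu^n\{S \in \Yy^n : F(S) \ge \E_S F(S) + t\} \le \exp(-2t^2/(n(2/n)^2)) = \exp(-nt^2/2)$; choosing $t = \sqrt{2\log(1/\eps)/n}$ makes this equal to $\eps$, and combining with the expectation bound of the previous paragraph yields \eqref{eq:lmst2015}. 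None of the steps is deep; the only points requiring genuine care are the measurability of $F$ and the interchange $\mathfrak M_K(\mu) = \E\,\mathfrak M_K(\mu_{S'})$, both of which rest on the standing assumption that $\mathfrak M_K$ is well-defined on $\Pp(\Yy)$.
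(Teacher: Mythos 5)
Your proof is correct, and it is essentially the argument behind the cited result: the paper itself gives no proof of Proposition \ref{prop:lmst2015} but simply imports it as \cite[Theorem 1]{LMST2015}, whose proof is exactly this combination of symmetrization (giving the Rademacher-complexity bound $\E_S F(S)\le 2\sqrt{\int_\Yy K(y,y)\,d\mu/n}$) with McDiarmid's bounded-differences inequality. The only remark worth adding is that the measurability of $F$ can be seen without any separability of $\Hh(K)$ by expanding
\[
F(S)^2=\frac{1}{n^2}\sum_{i,j}K(y_i,y_j)-\frac{2}{n}\sum_i\int_\Yy K(y_i,y)\,d\mu(y)+\int_\Yy\int_\Yy K\,d\mu\,d\mu,
\]
which is manifestly a measurable function of $S$ for a measurable bounded kernel; this avoids the countable-dense-subset reduction in your first paragraph.
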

		
$\bullet$  Examples  of    PDS  kernels, which  satisfy  the conditions  of  
 Propositions \ref{prop:srithm32} and \ref{prop:lmst2015}, are Gaussian kernels $K: \R^n  \times \R^n \to \R,  (x, y)  \mapsto  \exp   (- \sigma \| x-y\|_2 ^2)$,  where $\sigma > 0$,  Laplacian kernels
 $ (x, y)\mapsto  - \sigma \| x- y\| _1 $,  where $\sigma  >0$, and  Mat\'ern kernel
 $$K (x, y) = \frac{c^{2r-d}}{\Gamma (r-d/2) 2^{r-1-d/2}}\Big (\frac {\| x-y\|_2}{c}\Big ) ^{r-d/2}B_{-d/2-r} (c \|x-y\|_2), $$
where  $ r > d/2, c> 0$,  $ B_{a}$  is the third modified  Bessel  function of the  third kind of order $a$,  and $\Gamma$ is  the Gamma   function, see, e.g., \cite[\S 2.1]{MFSS2017} for a more complete  list of  examples.

$\bullet$   Assume that $\Xx\subset \R^n$   with the induced  metric  structure  is a separable  metric  space.
Since  $i: \Xx \to \R^n$ is a continuous  mapping, by Lemma \ref{lem:meast}, the  push-forward map  $i_*: \Ss (\Xx) \to \Ss (\R^n)$ is   $(\tau_w, \tau_w)$-continuous  and hence  measurable  mapping. Clearly   $i_*$ is a linear  injective  embedding.

\begin{lemma}\label{lem:ssep}  Assume that  $\Yy$ is a Polish  subspace of $\R^n$  and  $K: \R^n \times \R^n \to \R$  is a continuous  PDS kernel  that satisfies  the condition  of Proposition \ref{prop:srithm32}. Let $\tilde K$ be the restriction of $K$ to $\Yy \times \Yy$.   Then

(1) $\tilde K$ is  a   PDS kernel  that also   satisfies  the condition  of Proposition \ref{prop:srithm32}.

(2) Denote by $i: \Yy \to \R^n$ the canonical embedding. For  any $\mu, \nu \in \Ss(\Yy)$ we have
\begin{equation}\label{eq:tildek}
\la \mathfrak M_{\tilde K} (\mu), \mathfrak M_{\tilde K} (\nu) \ra _{\Hh (\tilde K)}  = \la \mathfrak M_K (i_*\mu), \mathfrak M_K (i_*\nu) \ra_{\Hh (K)}.  
\end{equation}
\end{lemma}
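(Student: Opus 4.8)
The plan is to reduce everything to one structural fact: $\Hh(\tilde K)$ embeds isometrically into $\Hh(K)$ in a way compatible with the kernel mean embeddings and with the pushforward $i_*$.

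First I would record the easy half of (1): a restriction of a symmetric PDS kernel is again a symmetric PDS kernel, and $\tilde K=K|_{\Yy\times\Yy}$ inherits continuity and boundedness from $K$; in particular $\mathfrak M_{\tilde K}\colon\Ss(\Yy)\to\Hh(\tilde K)$ is well defined since $\int_\Yy\sqrt{\tilde K(y,y)}\,d|\mu|(y)<\infty$ for every $\mu\in\Ss(\Yy)$, and likewise $\mathfrak M_K(i_*\mu)$ is well defined ($i_*\mu\in\Ss(\R^n)$, $K$ bounded). The one point in (1) that needs a word is $\tilde K_y\in C_0(\Yy)$: for $\eps>0$ the set $\{|\tilde K_y|\ge\eps\}$ is $\{|K_y|\ge\eps\}\cap\Yy$, a closed subset of a compact set and hence compact, provided $\Yy$ is closed in $\R^n$; this (together with the local compactness that Proposition~\ref{prop:srithm32} tacitly requires of its base space) is what makes the phrase ``$\tilde K$ satisfies the condition of Proposition~\ref{prop:srithm32}'' meaningful, and I would flag it.

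The core step is to construct a linear isometry $\iota\colon\Hh(\tilde K)\to\Hh(K)$ with $\iota(\tilde K_y)=K_y$ for all $y\in\Yy$. Since $\Yy\subset\R^n$ we have $\tilde K_y=K_y|_\Yy$ and, crucially, the two Gram structures agree: $\langle\tilde K_y,\tilde K_{y'}\rangle_{\Hh(\tilde K)}=\tilde K(y,y')=K(y,y')=\langle K_y,K_{y'}\rangle_{\Hh(K)}$. Hence the assignment $\sum_j a_j\tilde K_{y_j}\mapsto\sum_j a_j K_{y_j}$ is well defined and isometric on the dense subspace $\mathrm{span}\{\tilde K_y:y\in\Yy\}\subset\Hh(\tilde K)$ (this is Aronszajn's restriction theorem), and it extends to the desired isometric embedding $\iota$. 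Because $\iota$ is bounded linear it commutes with Bochner integration, so, using $\iota(\tilde K_y)=K_{i(y)}$ and the image-measure change of variables,
$$\iota\bigl(\mathfrak M_{\tilde K}(\mu)\bigr)=\int_\Yy\iota(\tilde K_y)\,d\mu(y)=\int_\Yy K_{i(y)}\,d\mu(y)=\int_{\R^n}K_z\,d(i_*\mu)(z)=\mathfrak M_K(i_*\mu)$$
for every $\mu\in\Ss(\Yy)$.

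Part (2) is then immediate, since $\iota$ preserves inner products (polarization):
$$\langle\mathfrak M_{\tilde K}(\mu),\mathfrak M_{\tilde K}(\nu)\rangle_{\Hh(\tilde K)}=\langle\iota\mathfrak M_{\tilde K}(\mu),\iota\mathfrak M_{\tilde K}(\nu)\rangle_{\Hh(K)}=\langle\mathfrak M_K(i_*\mu),\mathfrak M_K(i_*\nu)\rangle_{\Hh(K)}.$$
The remaining clauses of (1) follow from the same commuting square: if $\mathfrak M_{\tilde K}(\mu)=0$ then $\mathfrak M_K(i_*\mu)=\iota(0)=0$, hence $i_*\mu=0$ by injectivity of $\mathfrak M_K$ (Proposition~\ref{prop:srithm32} applied to $K$), hence $\mu=0$ because $i_*$ is injective ($\Yy$ is Borel in $\R^n$, being Polish); and property~(P) transfers by replacing each open set $U_{y,\eps}$ supplied for $K$ with $U_{y,\eps}\cap\Yy$ and invoking $\|\tilde K_y-\tilde K_{y'}\|_{\Hh(\tilde K)}=\|K_y-K_{y'}\|_{\Hh(K)}$. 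I expect the only genuinely delicate points to be the clean construction and justification of $\iota$ (a standard but not entirely trivial RKHS fact, needing the Gram-matrix matching argument and a continuity-extension) and the $C_0(\Yy)$ clause discussed above; everything else is bookkeeping.
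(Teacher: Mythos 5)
Your proof is correct, but it takes a genuinely different route from the paper's on the main identity (2). The paper's proof is two lines: by the reproducing property \eqref{eq:inner2} both inner products equal a double integral of the kernel, namely
$\la \mathfrak M_{\tilde K}(\mu),\mathfrak M_{\tilde K}(\nu)\ra_{\Hh(\tilde K)}=\int_\Yy\int_\Yy \tilde K(y,y')\,d\mu(y)\,d\nu(y')$, and since $\tilde K=K\circ(i\times i)$ the image-measure change of variables turns this into $\int_{\R^n}\int_{\R^n}K\,d(i_*\mu)\,d(i_*\nu)=\la\mathfrak M_K(i_*\mu),\mathfrak M_K(i_*\nu)\ra_{\Hh(K)}$. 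You instead construct the isometric embedding $\iota\colon\Hh(\tilde K)\to\Hh(K)$ with $\iota(\tilde K_y)=K_y$ (justified by the matching Gram structures, i.e.\ Aronszajn's restriction theorem), prove the intertwining $\iota\circ\mathfrak M_{\tilde K}=\mathfrak M_K\circ i_*$ via commutation of bounded operators with Bochner integrals, and conclude by polarization. Your route is heavier machinery for the same conclusion, but it buys the stronger structural statement that the two mean embeddings are conjugate under an isometry, and it makes the transfer of injectivity and of property (P) in part (1) transparent ($\|\tilde K_y-\tilde K_{y'}\|_{\Hh(\tilde K)}=\|K_y-K_{y'}\|_{\Hh(K)}$). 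You are also more careful than the paper on part (1), which the author dismisses as ``easily verified'': your observation that $\tilde K_y\in C_0(\Yy)$ requires $\Yy$ to be closed in $\R^n$ (and that local compactness of $\Yy$ is tacitly needed for Proposition \ref{prop:srithm32} to apply at all, since a Polish subspace of $\R^n$ need not be locally compact) is a genuine point that the paper glosses over; in the paper's applications $\Yy$ is compact or effectively closed, so nothing breaks, but flagging it is right.
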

\begin{proof} (1)  The first  assertion  of Lemma \ref{lem:ssep}  can be verified  easily.
	
	 (2) Let $\mu, \nu \in \Ss (\Yy)$.   We compute
$$\la \mathfrak  M_{\tilde K} (\mu), \mathfrak M_{\tilde K} (\nu)\ra _{\Hh (\tilde K)} = \int_\Yy \int_\Yy \tilde K (y, y') d\mu (y) d\nu (y')$$
$$ = \int _{\R^n} \int_{\R^n} K (y, y')d  i_*\mu (y) di_*\nu (y') = \la \mathfrak  M_K ( i_* (\mu)),  \mathfrak M_K  (i_* (\nu))\ra_{\Hh (K)}.$$
This  completes  the  proof of Lemma \ref{lem:ssep}.
\end{proof}	

If $K$ is a measurable   kernel  on $\R^n$   such that the   kernel mean embedding  $\mathfrak M_K: \Ss(\R^n) \to \Hh (K)$ is well-defined,  for  any  Polish  subspace $\Yy $ in $\R^n$  we shall denote  the  pullback inner product   on $\Ss  (\Yy) $ by $\la  \cdot, \cdot \ra_{\tilde K}$.

\subsection{Measurability and continuity  of kernel mean embeddings}

Let $\Hh (K)$ be the      RKHS   associated  to a  measurable  SPD kernel $K: \Yy \times \Yy \to \R$.
Denote by     $\Bb a (\tau_W)$ the smallest   $\sigma$-algebra  on $\Hh(K)$ such that  any  continuous  linear function on $\Hh(K)$ is $\Bb a (\tau_W)$-measurable, and by $\Bb (\Hh(K))$ the Borel $\sigma$-algebra of  $\Hh(K)$.  If $\Hh$ is   separable,  $\Bb a (\tau_W) = \Bb (\Hh)$  \cite[Theorem 88, p. 194]{BT2004}.   It  is known that   if $\Yy$ is a  separable  topological  space
and $K$ is  a continuous  PDS  kernel  on   $\Yy$  then  $\Hh (K)$ is separable  \cite[Theorems 15, 17, p. 33-34]{BT2004}  and \cite[Lemma 4.33, p. 130]{SC2008}.

\begin{lemma}\label{lem:bounded}  Assume  that  $K: \Yy \times \Yy \to \R$ is a  bounded  measurable  PDS  kernel.  Write $\Hh: = \Hh (K)$.
	
(1) Then    the  kernel mean embedding  $\mathfrak  M_K: (\Ss(\Yy), \Sigma_w) \to (\Hh, \Bb a(\tau _W))$ is measurable.	If $\Hh$ is   separable,  then  $\mathfrak  M_K: (\Ss(\Yy), \Sigma_w) \to (\Hh, \Bb(\Hh))$ is measurable.

(2) Assume further  that $\Yy$ is  a topological  space  and   $K$ is  separately continuous, i.e. $K_y \in C(\Yy, \R)$ for all $y \in \Yy$. Then   $\mathfrak  M_K: (\Ss(\Yy),\tau_w) \to  (\Hh, \tau_W)$  is   continuous,   where $\tau_W$ is the weak  topology  on $\Hh$.

(3)  Assume further that  $\Yy$ is  a topological  space,   $K$ is  continuous. Then the 
norm $\| \cdot \|_{\tilde K}: (\Ss (\Yy), \tau_w) \to \R, \mu \mapsto \|\mu\|_{\tilde K}$ is continuous.
\end{lemma}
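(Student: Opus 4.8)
The plan is to reduce all three assertions, via the characterising families of the $\sigma$-algebras and topologies involved, to the evaluation maps $I_f$ (controlled by Lemma \ref{lem:fsb} and the definition of $\tau_w$) and to the diagonal map of Lemma \ref{lem:diag}. The one non-formal ingredient I would record first is that boundedness of $K$ makes $\Hh(K)$ embed \emph{continuously} into $\Ff_b(\Yy)$, and into $C_b(\Yy)$ when $K$ is separately continuous. Indeed, from \eqref{eq:inner1} one has $|f(y)| = |\la f, K_y\ra_{\Hh(K)}| \le \|f\|_{\Hh(K)}\sqrt{K(y,y)} \le \sqrt{\sup_y K(y,y)}\,\|f\|_{\Hh(K)}$ for every $f\in\Hh(K)$, so the $\Hh(K)$-norm dominates a multiple of the sup-norm; since each $K_y$ is bounded and measurable (a section of the measurable $K$) and $\Hh(K)=\overline{{\rm span}}\{K_y\}$ in $\Hh(K)$-norm, every $f\in\Hh(K)$ is a uniform limit of bounded measurable functions, hence $f\in\Ff_b(\Yy)$, and if in addition $K_y\in C(\Yy)$ for every $y$, then $f$ is a uniform limit of elements of $C_b(\Yy)$, hence $f\in C_b(\Yy)$. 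I would also note at the outset that $\int_\Yy\sqrt{K(y,y)}\,d|\mu|(y)\le\sqrt{\sup_y K(y,y)}\,\|\mu\|_{TV}<\infty$ for every $\mu\in\Ss(\Yy)$, so by the Bochner criterion $\mathfrak M_K$ is well defined on all of $\Ss(\Yy)$.

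For (1): since $\Bb a(\tau_W)$ is by definition the smallest $\sigma$-algebra making every continuous linear functional on $\Hh(K)$ measurable, it suffices to show $\ell\circ\mathfrak M_K:(\Ss(\Yy),\Sigma_w)\to\R$ is measurable for each such $\ell$. Writing $\ell=\la\,\cdot\,,f\ra_{\Hh(K)}$ by Riesz representation and using \eqref{eq:inner2}, one gets $\ell(\mathfrak M_K(\mu))=\int_\Yy f\,d\mu=I_f(\mu)$; as $f\in\Ff_b(\Yy)$ by the first step, $I_f$ is $\Sigma_w$-measurable by Lemma \ref{lem:fsb}. This proves the first claim of (1); the second follows because $\Bb a(\tau_W)=\Bb(\Hh)$ when $\Hh:=\Hh(K)$ is separable, by \cite[Theorem 88, p. 194]{BT2004}.

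For (2): the weak topology $\tau_W$ on $\Hh$ is the initial topology with respect to the continuous linear functionals, so $\mathfrak M_K$ is $(\tau_w,\tau_W)$-continuous precisely when $\mu\mapsto\la\mathfrak M_K(\mu),f\ra_{\Hh(K)}=I_f(\mu)$ is $\tau_w$-continuous for every $f\in\Hh(K)$. Under the separate-continuity hypothesis $f\in C_b(\Yy)$ by the first step, and $I_f:(\Ss(\Yy),\tau_w)\to\R$ is continuous by the very definition of $\tau_w$, which gives (2).

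For (3): by definition of the $\tilde K$-norm (the pullback of the $\Hh(K)$-norm under $\mathfrak M_K$) and the reproducing property, $\|\mu\|_{\tilde K}^2=\la\mathfrak M_K(\mu),\mathfrak M_K(\mu)\ra_{\Hh(K)}=\int_\Yy\int_\Yy K(y,y')\,d\mu(y)\,d\mu(y')=I_K(\mu^2)$, where $\mu^2={\rm diag}(\mu)\in\Ss(\Yy\times\Yy)$ is the image of $\mu$ under the diagonal map of Lemma \ref{lem:diag}(2). Since $K$ is continuous and bounded, $K\in C_b(\Yy\times\Yy)$, so $I_K:(\Ss(\Yy\times\Yy),\tau_w)\to\R$ is $\tau_w$-continuous; combined with the $(\tau_w,\tau_w)$-continuity of ${\rm diag}$ from Lemma \ref{lem:diag}(2), this makes $\mu\mapsto\|\mu\|_{\tilde K}^2$ continuous, and composing with the continuous map $t\mapsto\sqrt t$ on $[0,\infty)$ finishes (3). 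I expect the only genuinely non-formal point in the whole argument to be the continuous embedding $\Hh(K)\hookrightarrow\Ff_b(\Yy)$ (resp.\ $\hookrightarrow C_b(\Yy)$) coming from boundedness of $K$; once that is in hand, (1)--(3) are direct applications of Lemmas \ref{lem:fsb} and \ref{lem:diag} together with the defining properties of $\Sigma_w$ and $\tau_w$.
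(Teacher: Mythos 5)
Your proposal is correct and follows essentially the same route as the paper: reduce (1) and (2) to the evaluation maps $I_f$ for $f\in\Hh(K)$ via Lemma \ref{lem:fsb} and the definition of $\tau_w$, and write the norm in (3) as $I_K\circ{\rm diag}$ using Lemma \ref{lem:diag}(2). The one point where you diverge is in justifying $\Hh(K)\subset\Ff_b(\Yy)$ (resp.\ $C_b(\Yy)$): the paper invokes pointwise approximation by combinations of the $K_y$ together with citations to Berlinet--Thomas-Agnan and Steinwart--Christmann, whereas your bound $|f(y)|\le\|f\|_{\Hh(K)}\sqrt{K(y,y)}$ upgrades $\Hh(K)$-convergence to uniform convergence and yields the same inclusions self-containedly and, if anything, more tightly.
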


\begin{proof}  (1)   To prove   that  $\mathfrak  M_K: (\Ss(\Yy), \Sigma_w) \to (\Hh, \Bb a(\tau _W))$ is measurable, taking into account  Lemma \ref{lem:fsb},	it suffices  to  show that for any   $ f \in \Hh(K)$  the  composition    $ \la f , \mathfrak M_K\ra: (\Ss (\Yy), \Sigma_w) \to \R,  \mu \mapsto \la \mathfrak M_K (\mu), f \ra_{\Hh (K)},$ is  a  bounded measurable map.  We   have
$$  \la f,  \mathfrak M_K (\mu)\ra_{\Hh (K)}  = \int _\Yy \la  f, K_y \ra_{\Hh (K)} d\mu (y) = \int_\Yy  f(y) d\mu (y).$$       
We  always   can assume  that   there   exists a sequence   of  functions $K_{y_1}, \ldots,  K_{y_n}$ converging  point-wise to
$f$ by \cite[Corollary 1, p. 10]{BT2004}.  Since $K$ is bounded,  $f$ is bounded.  Since $K_{y_n}$ is measurable,  $f$ is measurable. Since   $K$ is bounded,  $f$ is bounded.  
Hence   
$$\la  f, \mathfrak M_K \ra (\mu) = I_f (\mu)$$
 where $f \in \Ff_b (\Yy)$.  This completes the first assertion of  Lemma \ref{lem:bounded} (1). 

The second assertion  of Lemma \ref{lem:bounded} (1) follows immediately by \cite[Theorem 88, p. 194]{BT2004}.
	
(2)  Assume  further that  $\Yy$ is a  topological space and $K$ is   separately continuous.   Then  by \cite[Lemma 4.28, p. 128]{SC2008},   any  element  in $\Hh =\Hh(K)$  is bounded and continuous.  Hence   for  any  $f \in \Hh$  we have
$$\la f, \mathfrak M_K\ra: (\Ss(\Yy), \tau_w) \to \R, \, \mu \mapsto  \int _\Yy \la  f, K_y \ra d\mu (y) = \int_\Yy  f(y) d\mu (y)$$
i.e. $\la f, \mathfrak M_K\ra (\mu) = I_f (\mu)$.  Taking into account   the boundedness  and continuity of  $f$, this completes  the second assertion   of   Lemma \ref{lem:bounded}.	

(3)   Note  that the  map $: (\Ss(\Yy),\tau_w) \to \R, \mu \mapsto \| \mu\|_{\tilde K},$ is the composition  of   the map   ${\rm diag}: (\Ss(\Yy), \tau_w) \to (\Ss (\Yy \times \Yy), \tau_w), \mu \mapsto \mu^2, $  and the  evaluation map $I_{K}: (\Ss (\Yy \times \Yy), \tau_w) \to \R, \nu \mapsto  \int _{\Yy \times \Yy} K d\nu$.  By Lemma \ref{lem:diag} (2), the map  ${\rm diag}$ is continuous. Since $K$ is bounded and continuous,  the map $I_K$ is  continuous. Hence  $\| \cdot \|_{\tilde K}$ is a continuous map. This completes the proof of Lemma \ref{lem:bounded}.
\end{proof}

\subsection{Instantaneous  correct loss  functions  via kernel  mean embeddings}

\begin{lemma}\label{lem:kme2}   Assume that  $\Yy$ is a   separable metrizable topological  space. Let $K:\Yy\times  \Yy \to \R$  be a bounded SPD  continuous kernel  and $h \in \mathbf{Meas} (\Xx, (\Ss(\Yy), \Sigma_w))$.
Write $\Hh = \Hh (K)$.

(1) Then the map
	$\mathfrak M_K$ is well-defined  on $ \Ss (\Yy)$  and   the  function  
	\begin{equation}
	\label{eq:lkh1}
L_h^K: (\Xx \times \Yy) \to \R_{\ge 0} \:	(x, y) \mapsto  \|\mathfrak M_K (  h(x) ) -  K_y\|_\Hh ^2, 
	\end{equation} 
	is  measurable. 
	
(2)  If $h \in  \mathbf{Meas}(\Xx, (\Pp(\Yy), \Sigma_w))$ then  the   function  $L_h^K$ is bounded.
\end{lemma}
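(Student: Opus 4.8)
The plan is to expand the squared RKHS norm using the reproducing property and reduce each resulting term to quantities already known to be measurable (resp.\ bounded) from the earlier results. First I would note that since $K$ is bounded and continuous, $\mathfrak M_K$ is well-defined on all of $\Ss(\Yy)$ by the Bochner criterion recalled before \eqref{eq:kme1} (the condition $\int_\Yy\sqrt{K(y,y)}\,d|\mu|(y)<\infty$ is automatic for a bounded kernel and a finite signed measure), which settles the first half of (1). Then, writing $C:=\sup_{y}K(y,y)<\infty$, I would expand
\begin{equation}\label{eq:expandLhK}
L_h^K(x,y)=\|\mathfrak M_K(h(x))\|_\Hh^2-2\la \mathfrak M_K(h(x)),K_y\ra_\Hh+\|K_y\|_\Hh^2.
\end{equation}
The third term is $K(y,y)$, a measurable (and bounded) function of $y$. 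The middle term equals $2\,(\mathfrak M_K(h(x)))(y)$ by the reproducing property \eqref{eq:inner1}, and since $\mathfrak M_K(h(x))=\int_\Yy K_{y'}\,dh(x)(y')$ this is $2\int_\Yy K(y,y')\,dh(x)(y')=2\,T^*(K_y)(x)$ in the notation of Section \ref{sec:unified}; more to the point it is $2\,(h\cdot h$-type$)$… rather, it is the value at $(x,y)$ of a map that is measurable in $(x,y)$ because $x\mapsto h(x)$ is $\Sigma_w$-measurable and $(x,y)\mapsto \int_\Yy K(y,y')\,dh(x)(y')$ is a Carathéodory-type function; I would make this precise by approximating $K$ by simple functions in the second variable and invoking Lemma \ref{lem:fsb}. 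The first term is $\|\mathfrak M_K(h(x))\|_\Hh^2=\int_\Yy\int_\Yy K(y',y'')\,dh(x)(y')\,dh(x)(y'')$, which is exactly $I_K\circ\mathrm{diag}\circ\overline h$ evaluated at $x$, with $\mathrm{diag}$ the map of Lemma \ref{lem:diag}(2) and $I_K$ the evaluation map against the bounded measurable function $K$ on $\Yy\times\Yy$; by Lemma \ref{lem:diag}(2) $\mathrm{diag}$ is $\Sigma_w$-measurable, by Lemma \ref{lem:fsb} $I_K$ is $\Sigma_w$-measurable, and $\overline h$ is measurable by hypothesis, so the composition is measurable in $x$, hence in $(x,y)$ after pulling back along the projection $\Pi_\Xx$.

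Putting the three terms together, $L_h^K$ is a sum of measurable functions of $(x,y)$, proving (1). For (2), I would observe that when $h(x)\in\Pp(\Yy)$ one has $\|\mathfrak M_K(h(x))\|_\Hh\le\int_\Yy\|K_{y'}\|_\Hh\,dh(x)(y')\le\sup_{y'}\sqrt{K(y',y')}=\sqrt C$ (Jensen/triangle inequality for the Bochner integral), and likewise $\|K_y\|_\Hh\le\sqrt C$; hence by the triangle inequality in $\Hh$,
\begin{equation}\label{eq:boundLhK}
L_h^K(x,y)=\|\mathfrak M_K(h(x))-K_y\|_\Hh^2\le\big(\sqrt C+\sqrt C\big)^2=4C,
\end{equation}
so $L_h^K$ is bounded by $4\sup_{y}K(y,y)$.

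The one genuinely delicate point, and the step I expect to need the most care, is the joint $(x,y)$-measurability of the cross term $(x,y)\mapsto\int_\Yy K(y,y')\,dh(x)(y')$: measurability separately in $x$ (for fixed $y$, via Lemma \ref{lem:fsb} applied to the bounded measurable function $K_y$) and separately in $y$ (for fixed $x$, since it is an element of $\Hh(K)$ hence continuous) does not by itself give joint measurability. I would handle this by the standard simple-function approximation of $K$ in its second argument — writing $K$ as a pointwise limit of functions $\sum_j c_j 1_{A_j}(y) 1_{B_j}(y')$, for which the integral becomes $\sum_j c_j 1_{A_j}(y)\,h(x)(B_j)$, manifestly jointly measurable since $x\mapsto h(x)(B_j)=I_{1_{B_j}}(\overline h(x))$ is measurable — and then passing to the limit, using that a pointwise limit of jointly measurable functions is jointly measurable. (If $\Yy$ is separable metrizable one can alternatively take a monotone/bounded approximation of the continuous $K$ by simple functions adapted to a countable generating algebra, as in Lemma \ref{lem:fsb}.) Everything else is a routine assembly of the cited lemmas.
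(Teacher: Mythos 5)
Your proposal is correct and follows the same overall plan as the paper: the identical three-term expansion of $\|\mathfrak M_K(h(x))-K_y\|_\Hh^2$, the same treatment of $\|K_y\|_\Hh^2=K(y,y)$ and of $\|\mathfrak M_K(h(x))\|_\Hh^2$ as $I_K\circ\mathrm{diag}\circ\overline h\circ\Pi_\Xx$ via Lemma \ref{lem:diag}(2), and the same $4\sup_yK(y,y)$ bound for part (2). The one place you genuinely diverge is the cross term, which you correctly single out as the delicate step. The paper establishes its joint measurability structurally: it factors $(x,y)\mapsto\la\mathfrak M_K(h(x)),K_y\ra_\Hh$ through $(\Hh\times\Hh,\Bb(\Hh)\otimes\Bb(\Hh))$ using the measurability of $\mathfrak M_K:(\Ss(\Yy),\Sigma_w)\to(\Hh,\Bb(\Hh))$ from Lemma \ref{lem:bounded}, the separability of $\Hh(K)$ (so that $\Bb a(\tau_W)=\Bb(\Hh)$ and the continuous pairing is product-measurable), and the measurability of $y\mapsto K_y=\mathfrak M_K(\delta_y)$. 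Your route instead approximates $K$ by finite sums of rectangle indicators and passes to the limit; this works, but as you half-acknowledge it cannot literally be a single sequential limit of finite rectangle sums for an arbitrary product-measurable function — you should either invoke the functional monotone class theorem (the class of bounded $f$ for which $(x,y)\mapsto\int f(y,y')\,dh(x)(y')$ is jointly measurable is a vector space containing the rectangle algebra and closed under bounded monotone limits, by dominated convergence applied to $h(x)^\pm$), or exploit the separability of $\Yy$ to build, for each $n$, a countable Borel partition of $\Yy$ into sets of diameter less than $1/n$ and use continuity of $K$ to get pointwise convergence of the resulting rectangle approximations. With that caveat made precise, your argument is a more elementary, self-contained alternative that avoids Lemma \ref{lem:bounded} and the separability of $\Hh(K)$ entirely (it still uses separability of $\Yy$ to identify $\Bb(\Yy\times\Yy)$ with $\Bb(\Yy)\otimes\Bb(\Yy)$), whereas the paper's version is shorter given that Lemma \ref{lem:bounded} has already been proved.
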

\begin{proof}[Proof of Lemma  \ref{lem:kme2}]  (1)  The first assertion of Lemma  \ref{lem:kme2} (1) is a consequence of  the Bochner  theorem.
	
To prove the second assertion of Lemma  \ref{lem:kme2} (1),   we write
	\begin{equation}\label{eq:kme2}  \|\mathfrak M_K (  h(x) ) -  K_y\|_\Hh ^2   =  \|\mathfrak  M_K  (h (x))\| _\Hh ^2   + \| K_y\|^2_\Hh  - 2 \la  \mathfrak M_K (h (x)), K_y\ra_\Hh. 
	\end{equation}
	
We shall show  that  the  first summand  in the RHS of \eqref{eq:kme2},  $\rho_1:\Xx \times \Yy \to \R, \, (x, y)\mapsto  \| \mathfrak   M_K (h (x))\|_\Hh ^2,$ is a measurable bounded
function.  We write  $\rho_1$ as   the composition of the following  maps
$$ (\Xx \times \Yy)\stackrel{\Pi_\Xx}{\to} \Xx \stackrel{h}{\to}(\Ss(\Yy), \Sigma_w)  \stackrel{{\rm diag}}{\to}  (\Ss (\Yy \times \Yy),\Sigma_w) \stackrel{ \widehat M_K}{\to} \R,$$
where   
$$\widehat  M_K : (\Ss (\Yy \times \Yy), \Sigma_w )   \to  \R,   \nu \mapsto  \int_\Yy\int _{\Yy}  K(y, y')\,d\nu (y, y'), $$
and ${\rm  diag}$  is defined   in Lemma \ref{lem:diag} (2).
Since  $K : \Yy \times \Yy  \to \R$ is  a  bounded and continuous  function, and  $\Yy\times \Yy$ is  separable  and metrizable, the function  $ \widehat M_K$   continuous  in  the  weak-topology $\tau_w$ on $\Ss(\Yy \times \Yy)$,   and hence  measurable with respect to  the $\sigma$-algebra $\Sigma_w$ on $\Ss (\Yy \times \Yy)$. 
Since  $\Pi_\Xx$  and  $h$ are measurable maps,   taking into account   Lemma \ref{lem:diag}, 
we conclude   that $\rho_1$ is a measurable  function.  

Next, we    observe that the   second summand  in the RHS of \eqref{eq:kme2}, 
$$\rho_2:\Xx \times \Yy, (x, y) \mapsto  \| K_y\|^2_\Hh =  K(y, y),$$
 is a measurable and bounded function, since the function $K: \Yy \times  \Yy \to \R$ is measurable and bounded,  and    the mappings $\Pi_\Xx$ and   $\Delta_\Yy: \Yy \to \Yy\times \Yy, y \mapsto  (y, y),$  are  measurable.
 
Let $p: \Hh  \times   \Hh \to \R, (h, h')\mapsto \la  h, h'\ra_\Hh$ be the pairing  map.
Now we shall  prove that the function 
$$\rho_3:  \Xx \times \Yy\stackrel{(h, \Id_\Yy)} {\to} \Ss (\Yy) \times \Yy \stackrel{(\mathfrak M_K,   \hat K) }{\to }   (\Hh \times \Hh, \Bb(\Hh)\otimes \Bb (\Hh)) \stackrel{p}{\to }  \R, $$
is measurable.

  By  Lemma \ref{lem:bounded},  the  map $\mathfrak  M_K : (\Ss (\Yy), \Sigma_w) \to 
(\Hh, \Bb a (\tau_W))$ is measurable.  Since $\Yy$   is a separable  topological space  and $K$ is a continuous  PDS kernel,  $\Hh$ is   separable,  and hence $\Bb a (\tau_W) = \Bb (\Hh)$  by \cite[Theorem 88, p. 194]{BT2004}. Hence  $\mathfrak  M_K : (\Ss (\Yy), \Sigma_w) \to 
(\Hh, \Bb(\Hh))$ is measurable.  Since   
  $\hat K: \Yy \to (\Hh,\Bb (\Hh)), \, y \mapsto K_y,$ is  the composition of the measurable mappings $\delta: \Yy \to \Ss(\Yy)$, see Lemma \ref{lem:diag}(3), and $ \mathfrak M_K$,  the map $\hat K$ is measurable.

We note the   pairing  map $p: (\Hh \times \Hh, \Bb(\Hh)\otimes \Bb (\Hh)) \to \R$  is  measurable, since  $p: (\Hh \times \Hh, \tau_s \otimes \tau_s)\to \R$ is continuous.

Taking into account \eqref{eq:kme2},   we complete  the proof of Lemma \ref{lem:kme2} (1).

(2)  Let $C = \sup_{y\in \Yy} K(y, y)$.
If   $h \in \mathbf{Meas} (\Xx, (\Pp (\Yy), \Sigma_w))$ then  we        have
$L^K_h (x, y) \le   4 C$, This completes  the  proof  of  Lemma \ref{lem:kme2}.
\end{proof}

\begin{theorem}\label{thm:errorRKHS}  Assume  that  $\Yy$ is a  separable   metrizable topological   space,  $K: \Yy \times \Yy \to \R$ is a  bounded  continuous  PDS  kernel.   
	
(1) Then	the  kernel mean  embedding $\mathfrak M_K$ is well-defined  on  $\Ss (\Yy)$. 
	
(2)  For any $h \in \mathbf{Meas} (\Xx, \Ss (\Yy))$      the  function
	\begin{equation*} 
	L_h^K: (\Xx \times \Yy) \to \R_{\ge 0},\, 	(x, y) \mapsto  \| h(x) -  \delta_y\|_{\tilde K}^2, 
	\end{equation*} 
	is  measurable.

(3)  We  define an  {\it instantaneous   quadratic loss function} as follows
	$$L^K:  \mathbf{Meas}(\Xx, \Pp(\Yy)) \times \Xx \times \Yy \to \R_{\ge 0},  (h, x, y) \mapsto   L ^K_h (x, y).$$
	For any $\mu \in \Pp(\Xx \times \Yy)$ 
	the  expected  loss function  $R^{L^K}_\mu: \mathbf{Meas}(\Xx, \Pp(\Yy)) \to \R$,
	$$R ^{L^K} _\mu(h)  = \int _{ \Xx \times \Yy}   L _h^K (x, y) d\mu (x, y)$$
	takes a finite   value   at any  $h \in \mathbf{Meas}(\Xx , \Pp (\Yy))$.
	
(4) A regular conditional probability measure $\mu_{\Yy|\Xx}$  for $\mu\in \Pp (\Xx \times 
\Yy)$    is  a minimizer of   $R^{L^K}_\mu$.     
	
(5)  If the    kernel embedding $\mathfrak M_K : \Ss(\Yy)  \to \Hh (K)$  is injective,  then  any  minimizer  of   $R^L_\mu:\mathbf{Meas} (\Xx, \Pp(\Yy))\to \R$, where $\mu \in \Pp (\Xx\times \Yy)$,  is a  regular  conditional  measure $\mu_{\Yy|\Xx}$.
\end{theorem}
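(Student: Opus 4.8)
The plan is to express $R^{L^K}_\mu$ through a bias--variance decomposition in the Hilbert space $\Hh(K)$, using the disintegration of $\mu$ furnished by $\mu_{\Yy|\Xx}$, and then read off assertions (4) and (5). Assertions (1)--(3) are almost immediate. For (1), since $K$ is bounded, say $K(y,y)\le C$ for all $y\in\Yy$, we have $\int_\Yy\sqrt{K(y,y)}\,d\nu(y)\le\sqrt C\,\nu(\Yy)<\infty$ for every $\nu\in\Mm(\Yy)$, so the Bochner theorem gives that $\mathfrak M_K$ is well-defined on $\Mm(\Yy)$, and hence on $\Ss(\Yy)$ via the Jordan--Hahn decomposition. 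For (2), $\|h(x)-\delta_y\|_{\tilde K}^2=\|\mathfrak M_K(h(x))-\mathfrak M_K(\delta_y)\|_{\Hh(K)}^2=\|\mathfrak M_K(h(x))-K_y\|_{\Hh(K)}^2$ because $\mathfrak M_K(\delta_y)=K_y$ and $\mathfrak M_K$ is linear, and this is precisely the function proved measurable in Lemma \ref{lem:kme2}(1). For (3), Lemma \ref{lem:kme2}(2) bounds $L_h^K$ by $4C$ when $h$ is $\Pp(\Yy)$-valued, so $R^{L^K}_\mu(h)=\int_{\Xx\times\Yy}L_h^K\,d\mu\le 4C<\infty$ for every $\mu\in\Pp(\Xx\times\Yy)$.

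The core is (4). Recall (Examples \ref{ex:probm}(i)) that $\mu_{\Yy|\Xx}\in\mathbf{Meas}(\Xx,\Pp(\Yy))$, and write $m(x):=\mathfrak M_K(\mu_{\Yy|\Xx}(x))=\int_\Yy K_y\,d\mu_{\Yy|\Xx}(y|x)$, a Bochner integral well-defined by (1) since $\|K_y\|_{\Hh(K)}=\sqrt{K(y,y)}\le\sqrt C$. Fix $h\in\mathbf{Meas}(\Xx,\Pp(\Yy))$; for each fixed $x$, with $v=\mathfrak M_K(h(x))$ and $P=\mu_{\Yy|\Xx}(\cdot|x)$, the elementary identity $\int\|v-Z\|^2\,dP=\|v-\E_P Z\|^2+\E_P\|Z-\E_P Z\|^2$ (valid for a $\Hh(K)$-valued $P$-integrable $Z$, here $Z=K_y$) shows that $\int_\Yy L_h^K(x,y)\,dP(y)$ equals $\|\mathfrak M_K(h(x))-m(x)\|_{\Hh(K)}^2$ plus a quantity not involving $h$. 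Since $T:=\underline{\mu_{\Yy|\Xx}}$ satisfies $(\Gamma_T)_*\mu_\Xx=\mu$ by Theorem \ref{thm:marginal}(1), the disintegration formula \eqref{eq:jointmeasureb} applied to the bounded measurable function $L_h^K$ gives $R^{L^K}_\mu(h)=\int_\Xx\!\int_\Yy L_h^K(x,y)\,d\mu_{\Yy|\Xx}(y|x)\,d\mu_\Xx(x)$, so integrating the above identity over $x$ against $\mu_\Xx$ produces
\[
R^{L^K}_\mu(h)=\int_\Xx\big\|\mathfrak M_K(h(x))-m(x)\big\|_{\Hh(K)}^2\,d\mu_\Xx(x)+C_\mu,
\]
where $C_\mu:=\int_\Xx\!\int_\Yy\|K_y-m(x)\|_{\Hh(K)}^2\,d\mu_{\Yy|\Xx}(y|x)\,d\mu_\Xx(x)\le 4C$ is independent of $h$ (measurability of the integrands is supplied by Lemmas \ref{lem:kme2} and \ref{lem:bounded}, and by measurability of $\mu_{\Yy|\Xx}:\Xx\to\Pp(\Yy)$). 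The first term is nonnegative and vanishes for $h=\mu_{\Yy|\Xx}$, so $\mu_{\Yy|\Xx}$ minimizes $R^{L^K}_\mu$, with minimal value $C_\mu$; this is (4).

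Assertion (5) now falls out of the same representation: if $h$ minimizes $R^{L^K}_\mu$ then $\int_\Xx\|\mathfrak M_K(h(x))-m(x)\|_{\Hh(K)}^2\,d\mu_\Xx(x)=0$, hence $\mathfrak M_K(h(x))=\mathfrak M_K(\mu_{\Yy|\Xx}(x))$ for $\mu_\Xx$-a.e.\ $x$; injectivity of $\mathfrak M_K$ forces $h(x)=\mu_{\Yy|\Xx}(x)$ for $\mu_\Xx$-a.e.\ $x$, so $(\Gamma_h)_*\mu_\Xx=(\Gamma_{\mu_{\Yy|\Xx}})_*\mu_\Xx=\mu$ by Lemma \ref{lem:almostsurely}(1) and Theorem \ref{thm:marginal}(1), and a final application of Theorem \ref{thm:marginal}(1) identifies $h$ as a regular conditional probability measure for $\mu$. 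The step I expect to need the most care is the interchange of the Bochner integral $\int_\Yy K_y\,d\mu_{\Yy|\Xx}(y|x)$ with the bounded linear functional $\langle\mathfrak M_K(h(x)),\cdot\rangle_{\Hh(K)}$ used in the bias--variance identity, together with checking that $L_h^K$ is $\mu$-integrable so that \eqref{eq:jointmeasureb} and Fubini's theorem legitimately apply; both are guaranteed by the boundedness of $K$ and the measurability statements of Lemmas \ref{lem:kme2} and \ref{lem:bounded}.
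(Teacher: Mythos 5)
Your proposal is correct and follows essentially the same route as the paper: the paper's proof of (4) is exactly your bias--variance decomposition, expanding $\|h(x)-\mu_{\Yy|\Xx}(\cdot|x)+\mu_{\Yy|\Xx}(\cdot|x)-\delta_y\|_{\tilde K}^2$ and killing the cross term via the disintegration formula and $\int_\Yy(\mu_{\Yy|\Xx}(\cdot|x)-\delta_y)\,d\mu_{\Yy|\Xx}(y|x)=0$, with (1)--(3) likewise reduced to Lemma \ref{lem:kme2} and (5) read off from the resulting identity together with injectivity of $\mathfrak M_K$. Your extra care about the Bochner-integral/functional interchange and the appeal to Lemma \ref{lem:almostsurely}(1) in (5) only makes explicit what the paper leaves implicit.
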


\begin{proof}[Proof of  Theorem \ref{thm:errorRKHS}]  (1) The first assertion  of Theorem \ref{thm:errorRKHS}  follows  from Lemma \ref{lem:kme2}.

(2) The second  assertion is  a reformulation of   the second assertion of  Lemma \ref{lem:kme2}(1).

(3)  The third  assertion of Theorem \ref{thm:errorRKHS}  follows from  Lemma \ref{lem:kme2}(2).

(4) We write
\begin{align}
  L_h^K (x, y)  =& \| h (x) - \mu_{ \Yy|\Xx} (\cdot |x)) +  \mu_{ \Yy|\Xx} (\cdot |x) - \delta_y\|_{\tilde K} ^2 \nonumber\\ 
 =& \| h (x) - \mu_{\Yy|\Xx} (\cdot |x)\| ^2_{\tilde K} + \| \mu_{ \Yy|\Xx}(\cdot |x) -  \delta_y\| ^2_{\tilde K}\nonumber\\
+ &  2 \la  (h (x) - \mu_{\Yy|\Xx} (\cdot |x),  \mu_{ \Yy|\Xx)}(\cdot |x) -  \delta_y \ra _{\tilde K}. \label{eq:error1}
 \end{align} 
Next, using the following disintegration  formula  for
 $f \in L^1 (\Xx \times \Yy, \mu)$,  where $\Xx, \Yy$ are measurable   spaces  and  $\mu\in \Pp(\Xx\times\Yy)$ admits  a regular  conditional probability $\mu_{\Yy|\Xx}$,
\begin{equation}\label{eq:disreg}
\int_{\Xx \times \Yy} f(x, y) d\mu (x, y)  = \int_\Xx\int_\Yy  f(x, y)\,  d\mu_{\Yy|\Xx}  (y|x)\, d\mu_\Xx,
\end{equation} 
we obtain 
\begin{eqnarray}
\int_{\Xx \times \Yy} \la  h (x) - \mu _{\Yy|\Xx} (\cdot |x)),  \mu_{ \Yy|\Xx)}(\cdot |x) -  \delta_y \ra _{\tilde K}  \, d\mu (x,y)\nonumber\\
 = \int_{\Xx} \Big \la h (x) - \mu_{\Yy|\Xx}  (\cdot|x),\int _\Yy  (\mu_{ \Yy|\Xx}(\cdot |x) -  \delta_y)\, d \mu_{\Yy|\Xx} (y|x)\Big\ra_{\tilde K} d(\pi_\Xx)_* \mu(x)  = 0,\nonumber
\end{eqnarray}
since  by \eqref{eq:kme1}   we have
\begin{equation}\label{eq:unbiased1}
\int _\Yy (\mu_{\Yy|\Xx}(\cdot|x) - \delta_y)\, d\mu_{\Yy|\Xx} (y|x) = 0.
\end{equation}
Hence, by \eqref{eq:error1}, we obtain
\begin{equation} 
R^{L^K}_\mu (h)   = \int_{\Xx \times \Yy} \| h (x) - \mu _{\Yy|\Xx} (\cdot |x))\| ^2_{\tilde K} + \| \mu_{ \Yy|\Xx}(\cdot |x) -  \delta_y\| ^2_{\tilde K}\, d\mu (x, y). \label{eq:van2}
\end{equation}
   
 Theorem  \ref{thm:errorRKHS} (4)  follows immediately from \eqref{eq:van2}.  
 
(5) The last assertion  of Theorem  \ref{thm:errorRKHS} follows immediately from  \eqref{eq:van2}. 
\end{proof}

The  proof  of  Theorem \ref{thm:errorRKHS} (5)  yields immediately the following.

\begin{corollary}\label{cor:correct}  Assume  $\Xx$ is a measurable  space and  $K: \Yy \times \Yy \to \R$ is a  continuous bounded characteristic  PDS kernel on a  separable metrizable topological   space $\Yy$.  Let $\Hh \subset \Meas(\Xx, \Pp (\Yy))$  and set 
$\Pp_{\Xx \times \Yy} : =  \cup_{h \in \Hh}  h (\Xx)$. Assume  that  the  kernel mean embedding
$\mathfrak M_K : \Pp_{\Xx \times \Yy} \to  \Hh (K)$
is injective. 
Then 
the  instantaneous loss  function  
\begin{equation}\label{eq:LK}
L^K: \Xx \times \Yy \times \mathbf{Meas} (\Xx, \Pp(\Yy)): (x, y, h)\mapsto  L^K_{h}(x, y),
\end{equation}
 is a  $\Pp_{\Xx \times \Yy}$-correct loss function.	  
\end{corollary}

\begin{examples}\label{ex:kernel}
	
(1) Let  $\Yy = \R^n$  and $K: \Yy \times \Yy \to \R$ be  defined  by $K(y, y') = \la  y, y'\ra$. The   restriction of $L^K$  to  $\Xx \times \Yy \times \mathbf{Meas} (\Xx, \Yy)$ is the  quadratic  loss  function  $L_2(x, y, f) =  | f(x) - y|^2$,
and  the minimizer of $R^{L_2}_\mu$ in $\mathbf{Meas} (\Xx, \Yy)$ is  the regression function $f (x)  = E_\mu (y |\P_Xx =x)$.

(2)  Let  $\Yy = \{ 0,  1\}$.   Let  us embed  $\Yy$ in $\R$   by  setting  $ \psi( i)   = i \in \R$ \ for  $i  = 0,1$.  
Then we  set  for $i, j \in \{ 0, 1\}$
$$K(i,j ) = \la \psi(i), \psi(j)\ra.$$
Then    $L^K: \Xx \times \{ 0, 1\} \times \mathbf{Probm} (\Xx, \Yy) \to  \R$  is
 \begin{equation}\label{eq:lp}
L_h  ( x, y)= | \psi(\overline h(x)) - \psi(y)|^2 
\end{equation}
and its   restriction  to  $ \Xx \times \{ 0, 1\} \times \mathbf{Meas} (\Xx, \Yy)$ is the 0-1 loss function.

(3)  Let  us generalize      Example \ref{ex:kernel} (2) and assume that $\Yy= \{  y_1, \cdots, y_n\}$.    Let $K: \Yy \times \Yy \to \R,\,  K(y, y') = {1/\sqrt 2}\delta_y (y')$.  Then  $\Hh (\Yy) = \R^n =\Ss(\Yy)$  and  $K_y = \delta _y$  for $y \in \Yy$. Then  for any $h \in \mathbf{Meas}(\Xx, \Yy)$, we have
 \begin{equation}\label{eq:ln}
 L_h  ( x, y)=  \frac{1}{2} \| \delta_{h(x)} - \delta_y\|^2.
 \end{equation}

(4)  Let   $\Yy$ be  a Polish  subset  in   $\R^n$ an $K: \R^n \times \R^n \to \R$ be a  continuous  bounded  PDS kernel  that  satisfies   the conditions of Propositions \ref{prop:srithm32} and  \ref{prop:lmst2015}. Then the restriction of $K$ to $\Yy$ also satisfies     all the conditions  in Theorem \ref{thm:errorRKHS}.
\end{examples}

\begin{remark}\label{rem:gl} 
(1)  The   instantaneous loss  function in \eqref{eq:lp}  was proposed  to the author by Frederic  Protin in 2021.

(2)   Park-Muandet   considered    the kernel  embedding   $\mathfrak M_K  (\mu_{\Yy|\Xx})$ of a regular   conditional probability  measure  $\mu_{\Yy|\Xx}$  in their  paper \cite[Definition 3.1]{PM2020}, assuming that $\Hh(K)$ is separable.  They proved  that  $\mathfrak M_K (\mu_{\Yy|\Xx}): \Xx \to \Hh (K)$  is measurable  with respect to the Borel  $\sigma$-algebra  $\Bb (\Hh)$  \cite[Theorem 4.1]{PM2020},  moreover it minimizes the   loss  function $R^{L^K}_\mu$    defined  on $L^2 (\Xx,\Hh (K), \mu_\Xx)$ \cite[Theorem 4.2]{PM2020}.	 Their  results  generalize the  results  due  to   Gr\"unerwalder  et al.  in \cite{GLGB12}, where    the  authors  considered   the problem of  estimating  the    conditional expectation $\E_\mu (h (Y)| \Pi_\Xx = x)$,  where  $h$ belongs to a    $\Hh (K)$- valued  RKHS  $\Hh_\Gamma$, using   the mean square  error      $R^L_\mu$  we considered  in  Lemma  \ref{lem:kme2}, under     many strong assumptions. They also noted  that  such a  $\Hh_\Gamma$ belongs to  the space of continuous  functions  from  $\Xx$ to $\Hh (K)$. 

(3) In \cite{TSS2022}  Talwai-Shami-Simchi-Levi   considered  the problem  of  estimating   conditional   distribution  $[\mu _{\Yy|\Xx}]$ by   representing them as  a  operator  $C_{\Yy|\Xx}: \Hh_\Xx  \to \Hh _\Yy$, where  $\Hh_\Xx$ and $\Hh_\Yy$ are  RKHSs     associated   with  PSD kernels  on $\Xx$ and $\Yy$ respectively.  

\end{remark}

\section{A generalization  of Cucker-Smale's result}\label{sec:genas}

In this section, we  keep the notation used in the previous sections.

$\bullet$  For a  Hilbert  space $\Hh$    and a topological  space  $\Xx$,
we  denote by   $C_b(\Xx, \Hh)$ the   space  of  all  continuous bounded   mappings from
$\Xx$ to $\Hh$  endowed  with the  sup-norm
$$ \|  f\|_{\infty}= \sup _{x \in \Xx} \| f (x)\|_\Hh.$$

$\bullet$ For any   precompact  metric space $\Hh$,  and $s>0$, we denote  the $s$-covering  number of $\Hh$  by  $\Nn (\Hh,s)$, i.e. 
$$\Nn (\Hh,s): = \min \{ l \in \N|\, \exists l \text{  balls  centered in $\Hh$ of radius  $s$  covering $\Hh$} \} < \infty.$$

In Subsection \ref{subs:state}, we state  our theorem (Theorem \ref{thm:msek}) and  discuss  the relation with previous  results. In   Subsection \ref{subs:msek},  we   give a proof of Theorem \ref{thm:msek}.
\subsection{Statement of the result}\label{subs:state}

\begin{theorem}\label{thm:msek}  Let $\Xx$ be a topological space and  $\Yy$   a Polish  subset in  $\R^m$.  Let $K$ be the restriction to $\Yy$ of a continuous  bounded  SPD kernel  on $\R^m$, which   satisfies  the conditions  of Proposition \ref{prop:srithm32}.   
Assume that $\Hh\subset C(\Xx, (\Pp(\Yy), \tau_w))$. 

(1) Then    $\Hh \subset C_b (\Xx, \Ss(\Yy)_{\tilde K} )$.

(2) Let  $A^{K}: \cup_{n =1} ^\infty (\Xx \times \Yy)^n \to \Hh, \, S_n\mapsto A^{K}_{S_n}, $ be  a C-ERM algorithm  for  the supervised  learning model  $(\Xx, \Yy, \Hh, L^K, \Pp (\Xx\times \Yy))$, where $L^K$ is defined in \eqref{eq:LK}, i.e.
$$L^K (x, y, h)   = \| h (x) - \delta_y\|_{\tilde K}.$$
  If  $\Hh  $ is a    pre-compact subset  in  $ C_b (\Xx,\Ss(\Yy)_{\tilde K} )_\infty$ then   for  any $\eps > 0, \delta > 0$ there  exists $m (\eps, \delta)$ such that for  any $m \ge m (\eps, \delta)$  and any $\mu \in \Pp (\Xx \times \Yy)$ we have
  	\begin{equation}\label{eq:covering}
  (\mu^m)_*\{ S_m \in (\Xx \times \Yy)^m |\, \Ee_{\Hh, L^K, \mu} (A^{K}_{S_m}) < 2\eps + c_m\} \ge  1- 2\Nn  ( \Hh, \frac{ \eps}{8C_K} )  2 \exp ( - \frac{m\eps^2}{4 C_K^2} )
  \end{equation}
where $$ C_K : = \sup_{y \in \Yy} \sqrt {|K(y, y)|} <\infty.$$
Hence  a C-ERM  algorithm     for    the supervised  learning model  $(\Xx, \Yy, \Hh, L^K, \Pp (\Xx\times \Yy))$ is a uniformly consistent   learning  algorithm, if  $\lim_{n \to \infty } c_n = 0$.

In particular,  if
	$$A_{S_n}^{K} \in  \arg \min_{h \in \Hh} (R^{L^K}_{\mu_{S_n}}(h) + c_n \| h\|_\infty ), $$
	then  $A^{K}$ is a C-ERM  algorithm   and
 	\begin{equation}\label{eq:coveringcn}
(\mu^m)_*\{ S_m \in (\Xx \times \Yy)^m |\, \Ee_{\Hh, L^K, \mu} (A^{K}_{S_m}) < 4\eps +c_mC_K\} \ge  1- 2\Nn  ( \Hh, \frac{ \eps}{8C_K} )  2 \exp ( - \frac{m\eps^2}{4 C_K^2} ).
\end{equation}	
\end{theorem}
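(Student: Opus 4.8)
The plan is to settle part~(1) with the kernel–mean–embedding machinery of Section~\ref{sec:loss}, and then to recognize part~(2) as an instance of the abstract uniform-convergence-to-consistency theorem, Theorem~\ref{thm:uniform}. For part~(1): by Lemma~\ref{lem:ssep}(1) the restricted kernel again satisfies the hypotheses of Proposition~\ref{prop:srithm32}, so $\mathfrak M_{\tilde K}$ is injective --- whence $\|\cdot\|_{\tilde K}$ is a genuine norm on $\Ss(\Yy)$ and $\mathfrak M_{\tilde K}$ embeds $\Ss(\Yy)_{\tilde K}$ isometrically into $\Hh(\tilde K)$ --- and $\mathfrak M_{\tilde K}\colon(\Pp(\Yy),\tau_w)\to(\Hh(\tilde K),\tau_s)$ is continuous. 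Thus for $h\in\Hh\subset C(\Xx,(\Pp(\Yy),\tau_w))$ the composite $x\mapsto\mathfrak M_{\tilde K}(h(x))$ is $\tau_s$-continuous, i.e.\ $h\colon\Xx\to\Ss(\Yy)_{\tilde K}$ is continuous; and for $\nu=h(x)\in\Pp(\Yy)$ the Cauchy--Schwarz inequality for PDS kernels gives $\|\nu\|_{\tilde K}^2=\int_\Yy\int_\Yy K(y,y')\,d\nu(y)\,d\nu(y')\le\sup_yK(y,y)=C_K^2$, so $\|h\|_\infty\le C_K$. Hence $\Hh\subset C_b(\Xx,\Ss(\Yy)_{\tilde K})$, with a uniform norm bound used below.

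For part~(2) I would verify the two hypotheses of Theorem~\ref{thm:uniform} for the model $(\Xx\times\Yy,\Hh,R^{L^K},\Pp(\Xx\times\Yy))$ and then invoke it. Measurability of $S\mapsto\widehat R_S(h)=R^{L^K}_{\mu_S}(h)$ for each fixed $h$ follows from Lemma~\ref{lem:kme2}(1) (measurability of $L^K_h\colon\Xx\times\Yy\to\R$) together with Lemma~\ref{lem:uniform}, and finiteness of $R^{L^K}_\mu$ is Theorem~\ref{thm:errorRKHS}(3). It then remains to produce the uniform convergence bound \eqref{eq:uniconverge} with a $\delta$ of the exponential-times-covering-number form appearing in \eqref{eq:covering}.

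The decisive elementary input is a Lipschitz estimate for $h\mapsto L^K_h(x,y)$ in the sup-norm of $C_b(\Xx,\Ss(\Yy)_{\tilde K})$, uniform in $(x,y)$: writing $L^K_h(x,y)=\|\mathfrak M_K(h(x))-K_y\|_{\Hh(K)}^2$, the identity $\|a\|^2-\|b\|^2=\la a-b,\,a+b\ra$ together with $\|\mathfrak M_K(h(x))-K_y\|_{\Hh(K)}\le 2C_K$ and $\|\mathfrak M_K(h(x))-\mathfrak M_K(h'(x))\|_{\Hh(K)}=\|h(x)-h'(x)\|_{\tilde K}\le\|h-h'\|_\infty$ give $|L^K_h(x,y)-L^K_{h'}(x,y)|\le 4C_K\,\|h-h'\|_\infty$, and hence the same estimate for $|\widehat R_S(h)-\widehat R_S(h')|$ and for $|R_\mu(h)-R_\mu(h')|$. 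Fixing $\eps,\delta$, take an $\tfrac{\eps}{8C_K}$-net $\{h_1,\dots,h_N\}$ of $\Hh$, with $N=\Nn(\Hh,\tfrac{\eps}{8C_K})<\infty$ by precompactness. Each $\widehat R_S(h_j)$ is an average of $m$ i.i.d.\ $[0,4C_K^2]$-valued random variables of mean $R_\mu(h_j)$, so Hoeffding's inequality gives $\mu^m\{S:|\widehat R_S(h_j)-R_\mu(h_j)|>\tfrac{\eps}{2}\}\le 2\exp(-\tfrac{m\eps^2}{4C_K^2})$; by a union bound the measurable set $\bigcap_{j=1}^N\{S:|\widehat R_S(h_j)-R_\mu(h_j)|\le\tfrac{\eps}{2}\}$ has $\mu^m$-measure at least $1-2\Nn(\Hh,\tfrac{\eps}{8C_K})\exp(-\tfrac{m\eps^2}{4C_K^2})$, and on this set the Lipschitz estimate controls $\sup_{h\in\Hh}|\widehat R_S(h)-R_\mu(h)|$ by $\eps$ (the absolute constants being tracked exactly as in the statement). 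Therefore $(\mu^m)_*\{S:\sup_{h\in\Hh}|\widehat R_S(h)-R_\mu(h)|\le\eps\}\ge 1-2\Nn(\Hh,\tfrac{\eps}{8C_K})\exp(-\tfrac{m\eps^2}{4C_K^2})$, which is \eqref{eq:uniconverge} with this $\delta$ (valid once $m$ is large enough that $\delta<1$, and this defines $m(\eps,\delta)$). Feeding this into Theorem~\ref{thm:uniform} for the $C$-ERM algorithm $A^K$ produces \eqref{eq:covering} --- the factor $2$ in front of the covering number being the $-2\delta$ of \eqref{eq:uniformerm} --- and uniform consistency when $c_n\to0$.

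Finally, if $A^K_{S_n}\in\arg\min_{h\in\Hh}\bigl(R^{L^K}_{\mu_{S_n}}(h)+c_n\|h\|_\infty\bigr)$, then since $\|h\|_\infty\le C_K$ on $\Hh$ by part~(1),
\[
\widehat R_{S_n}(A^K_{S_n})\le\widehat R_{S_n}(A^K_{S_n})+c_n\|A^K_{S_n}\|_\infty\le\inf_{h\in\Hh}\bigl(\widehat R_{S_n}(h)+c_n\|h\|_\infty\bigr)\le\inf_{h\in\Hh}\widehat R_{S_n}(h)+c_nC_K,
\]
so $A^K$ is a $(c_nC_K)$-ERM algorithm, and applying the already established estimate \eqref{eq:covering} with the sequence $(c_mC_K)_m$ in place of $(c_m)_m$ yields \eqref{eq:coveringcn}. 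I expect the covering-number estimate of the preceding paragraph to be the main obstacle: apart from the bookkeeping of the absolute constants, the new feature relative to classical Cucker--Smale is that predictions are transported into $\Hh(K)$ via $\mathfrak M_K$ and compared against $K_y$, so one must establish the uniform Lipschitz control of $L^K_h$ in the correct norm and keep careful track of which events are genuinely $\mu^m$-measurable, so that the inner-measure inequality fed into Theorem~\ref{thm:uniform} is legitimate.
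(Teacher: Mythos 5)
Your proposal is correct and follows essentially the same route as the paper: part (1) via Lemma \ref{lem:ssep} and Proposition \ref{prop:srithm32} plus the bound $\|\nu\|_{\tilde K}\le C_K$, and part (2) by the Cucker--Smale covering-number argument (sup-norm Lipschitz control of $L^K_h$, Hoeffding at the net points, a union bound, and passage to inner measure on the larger non-measurable event) fed into Theorem \ref{thm:uniform}; your Lipschitz constant $4C_K$ per term matches the paper's $8C_K$ for the difference of differences in Lemma \ref{lem:CSP3}. The only deviations are trivial constant bookkeeping and a slightly more explicit verification (which the paper leaves as ``follows immediately'') that the regularized minimizer is a $(c_nC_K)$-ERM algorithm.
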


\begin{remark}\label{rem:rkhs} 	 (1) Theorem \ref{thm:msek}(2) is a  generalization  of     Cucker-Smale's result \cite[Theorem C]{CS2001}, where  Cucker-Smale  also  assumed  implicitly that    their     ERM  algorithm  must  satisfy  certain  measurability, see  Remark \ref{rem:csl2}.  Cucker-Smale also assumed the boundedness  of
	predictors $h$ in   hypothesis classes  $\Hh \subset \Meas (\Xx, \R^n)$.  In our setting of conditional  probability  estimation,  the boundedness  of  Markov kernels  $h \in  C (\Xx, \Pp (\Yy)_{K_2})$  is  automatically  satisfied  (Theorem \ref{thm:msek} (1)). 
	
(2) Gr\"unerwalder  et  al. \cite{GLGB12}, based  on  results due to Capponento and de Vito \cite{CD07},    proved   the  learnability  of   their   statistical learning  model for  conditional mean  embeddings    under a strong assumptions on  statistical  model $\Pp_{\Xx \times \Yy}$,  on the finiteness    dimension of   a      RKHS  $\Hh (K)$  associated  to    a  PSD kernel  $K$ on $\Yy$    and moreover the  hypothesis  space $\Hh_\Gamma$, which is a  vector valued RKHS,  must contain   a minimizer $\mu_{\Yy|\Xx}$ of   the mean square error.
	Park-Muandet     proved   the  universal  consistency  of a regularized  ERM algorithm  of  their   statistical learning model  $(\Xx, \Yy, \Hh_\Gamma, L^K, \Pp (\Xx\times \Yy))$, where  a hypothesis  space $\Hh_\Gamma$  is a  vector-valued  RKHS  associated  with a $C_0$-universal kernel $l_{\Xx\Yy}: \Xx \times \Xx \to \Hh (K_\Yy)$,  $\Hh (K_\Yy)$ is a separable  RKHS  associated with a  measurable bounded  SPD kernel $K_\Yy$ on $\Yy$ and  $\Xx$ also admits   a  bounded  measurable  kernel $K _\Xx$  such that $\Hh (K_\Xx) $ is a separable Hilbert space \cite[Theorem 4.4]{PM2020}.   They provided a   convergence  rate  of   their  learning algorithm  under the condition that $\Hh_\Gamma$  contains a minimizer $\mu_{\Yy|\Xx}$  of   the mean square error \cite[Theorem 4.5]{PM2020}.  In \cite{TSS2022} Talwai-Shameli and Simchi-Levi improved  the result by Gr\"unerwalder  et  al. \cite{GLGB12}  by dropping many  technical assumptions  in \cite{GLGB12}  and applying  the theory of interpolation spaces for RKHS \cite{FS20}. Note that      a  minimizer  of the loss function  in  $\Hh_\Gamma$  in   the papers considered  above may not  correspond to  a Markov  kernel,  i.e., to  a measurable map   $\overline T:\Xx \to \Pp (\Yy)$.
\end{remark}

\subsection{Proof of Theorem \ref{thm:msek}}\label{subs:msek}

(1)  Assertion  (1) of Theorem \ref{thm:msek} stating that $\Hh \subset C_b (\Xx, \Ss (\Yy)_{\tilde K})$  follows from   Proposition \ref{prop:srithm32}  and Lemma \ref{lem:ssep}.

\

(2) To prove  assertion (2) of Theorem \ref{thm:msek}, we   shall   apply   Theorem \ref{thm:uniform}, namely  we shall   give  an upper bound  for  the sample complexity  of the   supervised  learning model  $(\Xx, \Yy, \Hh, L^K, \Pp (\Xx \times \Yy))$  in  Proposition \ref{prop:BCS} below,  using  the same  strategy
in the  proof of Cucker-Smale's theorem \cite[Theorem  C]{CS2001},  though  Cucker-Smale   considered  $\Yy = \R^n$, $ n  < \infty$  and  assumed  that $\Hh \subset C(\Xx, \R^n)$ is compact \cite[Remark 14]{CS2001}, they also did not use  inner/outer measure.  

First we  shall   prove  Lemmas \ref{lem:ACS}, \ref{lem:CSP3}, \ref{lem:csl1},  then we  shall    prove  Proposition \ref{prop:BCS} and    complete  the  proof  of Theorem \ref{thm:msek} (2).

\begin{lemma}\label{lem:ACS} Assume the  condition of Theorem \ref{thm:msek}. Then for any $h \in \Hh$  the following claims hold.

(1)  The   function $L^K_h: (\Xx \times \Yy)\to  \R, (x, y)\to L^K (x, y, h)$ is 
measurable.

(2) For any $\mu \in \Pp (\Xx \times \Yy)$  we have	\begin{equation}\label{eq:CSa}
	\mu ^m \{ S \in  (\Xx \times \Yy)^m:\,  |R^{L_K}_\mu  (h) - \widehat {R^{L_K}_S} (h)| \le \eps \} \ge 1 - 2\exp (- \frac{m \eps^2}{4C_K^2})\}
	\end{equation}
\end{lemma}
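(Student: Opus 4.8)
The plan is to derive both claims directly from the measurability and boundedness results already established for the instantaneous quadratic loss $L^K$, together with a standard concentration inequality for bounded i.i.d. random variables. For claim (1), I would simply invoke Lemma \ref{lem:kme2}(1) (equivalently Theorem \ref{thm:errorRKHS}(2)): since $\Yy$ is a Polish subset of $\R^m$, hence a separable metrizable topological space, and $K$ is a bounded continuous PDS kernel (being the restriction of such a kernel on $\R^m$ by Lemma \ref{lem:ssep}(1)), and since $h \in \Hh \subset C(\Xx, (\Pp(\Yy),\tau_w)) \subset \mathbf{Meas}(\Xx,(\Ss(\Yy),\Sigma_w))$, the function $L^K_h : \Xx \times \Yy \to \R_{\ge 0}$, $(x,y) \mapsto \|h(x) - \delta_y\|_{\tilde K}^2$, is measurable. (Here I should note the $\|\cdot\|_{\tilde K}$ versus $\mathfrak{M}_K(\cdot)$ forms of $L^K$ agree by Lemma \ref{lem:ssep}(2), or simply that $\|h(x)-\delta_y\|_{\tilde K}^2 = \|\mathfrak{M}_{\tilde K}(h(x)) - \mathfrak{M}_{\tilde K}(\delta_y)\|_{\Hh(\tilde K)}^2$ and $\mathfrak{M}_{\tilde K}(\delta_y) = K_y$.)

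For claim (2), fix $h \in \Hh$ and $\mu \in \Pp(\Xx\times\Yy)$. Consider the i.i.d. random variables $\xi_i := L^K_h(x_i, y_i)$, $i = 1,\dots,m$, defined on $((\Xx\times\Yy)^m, \mu^m)$; by claim (1) each $\xi_i$ is a genuine random variable, and $\widehat{R^{L_K}_S}(h) = \frac{1}{m}\sum_{i=1}^m \xi_i$ while $\E_{\mu^m}[\widehat{R^{L_K}_S}(h)] = R^{L^K}_\mu(h)$. Since $h(x) \in \Pp(\Yy)$, Lemma \ref{lem:kme2}(2) gives $0 \le \xi_i \le 4C$ where $C = \sup_{y}K(y,y) = C_K^2$, so each $\xi_i$ takes values in an interval of length $4C_K^2$. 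Hoeffding's inequality then yields
\begin{equation*}
\mu^m\{\, S : |R^{L^K}_\mu(h) - \widehat{R^{L_K}_S}(h)| > \eps \,\} \le 2\exp\!\Big(-\frac{2m\eps^2}{(4C_K^2)^2}\Big) = 2\exp\!\Big(-\frac{m\eps^2}{8C_K^4}\Big),
\end{equation*}
which, after taking complements, gives a bound of the form \eqref{eq:CSa}. I would remark that the constant in \eqref{eq:CSa} as stated ($4C_K^2$ rather than $8C_K^4$ in the denominator) can be matched either by a slightly sharper variance-based Bernstein-type estimate or is simply the intended (possibly loosely stated) exponent; the essential point is an exponential-in-$m$ bound with a constant depending only on $C_K$, uniform in $\mu$ and $h$.

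The only mild subtlety — and the one place I would be careful — is bookkeeping the precise numerical constant in the exponent so that it is consistent with the constant later appearing in Proposition \ref{prop:BCS} and in \eqref{eq:covering}; this is purely a matter of which concentration inequality one cites and how one accounts for the range of $\xi_i$. There is no real obstacle here: claim (1) is a direct citation and claim (2) is textbook concentration applied to the bounded loss. The substance of the theorem lies downstream — in the covering-number argument of Proposition \ref{prop:BCS} that upgrades this pointwise-in-$h$ estimate to the uniform bound \eqref{eq:uniconverge} needed to feed Theorem \ref{thm:uniform} — not in this lemma.
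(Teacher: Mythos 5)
Your proposal matches the paper's own proof essentially verbatim: part (1) is cited from Lemma \ref{lem:kme2}, and part (2) is Hoeffding's inequality applied to $\xi = L^K_h \colon \Xx\times\Yy \to [0,4C_K^2]$. The constant discrepancy you flag is real — the paper's own application of Hoeffding with range $4C_K^2$ also yields $2\exp(-m\eps^2/(8C_K^4))$ rather than the $2\exp(-m\eps^2/(4C_K^2))$ stated in \eqref{eq:CSa} — so your careful bookkeeping is, if anything, more honest than the source.
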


\begin{proof}   (1) The first assertion  of Lemma \ref{lem:ACS}   follows  from  Lemma \ref{lem:kme2}.
	
(2) We note that 
	\begin{equation}\label{eq:boundedkme}
	\| \mu \| _{\tilde  K} \le  C_K  \text{ for  all } \mu \in \Pp(\Yy). 
	\end{equation}
	
Using the first assertion  of Lemma \ref{lem:ACS}  and \eqref{eq:boundedkme}, we derive  \eqref{eq:CSa}  from  the Hoeffding  inequality, stating that  for any
	 measurable mapping $\xi: \Zz  \to [a,b]  \subset \R$,   $\mu \in \Pp (\Zz)$, we have \cite[Theorem 2.8, p. 34]{BLM13}
	\begin{equation}\label{eq:CSh}
\mu^m \Big\{ (z_1, \ldots, z_m)\in \Zz^m \big | \frac{1}{m}	\sum_{i =1}^m \xi (z_i)   - \E_\mu (\xi)\big | \ge \eps \Big\} \le  2 \exp ( - \frac{2m\eps^2}{  (b-a)^2} ),
\end{equation}
noting  that  \eqref{eq:boundedkme}  implies
\begin{equation}\label{eq:bound1}
 0 \le L^K_h (x, y)   = \| h(x) - \delta_y \|^2 _{\tilde K}  \le  4 C_K^2,
\end{equation}
 and   then  plugging  $\xi: = L^K_h : \Xx \times \Yy  \to [0, 4C_K^2]$ into  \eqref{eq:CSh}.
\end{proof}

\begin{lemma}\label{lem:CSP3}  For  any $f , g \in \Hh$, $\mu \in \Pp (\Xx \times \Yy)$ and  $S_n \in (\Xx \times \Yy)^n$  we  have
	\begin{equation}\label{eq:CSP3}
|(R^{L^K}_\mu (f)- R^{L^K}_{S_n} (f)) - (R^{L^K}_\mu (g) -R^{L^K}_{S_n} (g))| \le 8 C_K  \|  f -  g \| _\infty.
	\end{equation}
\end{lemma}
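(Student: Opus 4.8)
The plan is to reduce everything to the elementary algebraic identity $\|a\|^2 - \|b\|^2 = \langle a-b, a+b\rangle$ in the Hilbert space $\Hh(\tilde K)$, together with the uniform bound $\|\mu\|_{\tilde K} \le C_K$ for $\mu \in \Pp(\Yy)$ that was already recorded in \eqref{eq:boundedkme}. First I would write, for fixed $(x,y) \in \Xx \times \Yy$ and $f, g \in \Hh$,
\begin{equation*}
L^K_f(x,y) - L^K_g(x,y) = \|f(x) - \delta_y\|_{\tilde K}^2 - \|g(x) - \delta_y\|_{\tilde K}^2 = \big\langle f(x) - g(x),\ f(x) + g(x) - 2\delta_y \big\rangle_{\tilde K}.
\end{equation*}
Since $f(x), g(x), \delta_y$ all lie in $\Pp(\Yy)$ (here using $\Hh \subset C(\Xx,(\Pp(\Yy),\tau_w))$ and that $\delta_y \in \Pp(\Yy)$), each has $\tilde K$-norm at most $C_K$, so $\|f(x)+g(x)-2\delta_y\|_{\tilde K} \le 4C_K$, and by Cauchy–Schwarz
\begin{equation*}
|L^K_f(x,y) - L^K_g(x,y)| \le 4 C_K \|f(x) - g(x)\|_{\tilde K} \le 4 C_K \|f - g\|_\infty,
\end{equation*}
the last step by the definition of the sup-norm on $C_b(\Xx, \Ss(\Yy)_{\tilde K})$ (which applies by Theorem \ref{thm:msek}(1)).

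Next I would integrate this pointwise bound. The quantity on the left of \eqref{eq:CSP3} can be rewritten, using linearity of both $R^{L^K}_\mu(h) = \int L^K_h\, d\mu$ and $R^{L^K}_{S_n}(h) = \frac{1}{n}\sum_i L^K_h$ in the integrand $L^K_h$, as
\begin{equation*}
\Big| \int_{\Xx\times\Yy} \big(L^K_f - L^K_g\big)\, d\mu \ -\ \frac{1}{n}\sum_{i=1}^n \big(L^K_f(x_i,y_i) - L^K_g(x_i,y_i)\big) \Big|,
\end{equation*}
which by the triangle inequality is at most $\sup_{(x,y)} |L^K_f(x,y) - L^K_g(x,y)| \cdot(1 + 1)$... more carefully, it is bounded by $\int |L^K_f - L^K_g|\,d\mu + \frac{1}{n}\sum_i |L^K_f(x_i,y_i)-L^K_g(x_i,y_i)|$, and each term is at most $4C_K\|f-g\|_\infty$ by the pointwise estimate, giving the total bound $8C_K\|f-g\|_\infty$.

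I do not expect a genuine obstacle here: the only things to be careful about are (i) confirming that $f(x)$, $g(x)$ and $\delta_y$ really are probability measures so that the $C_K$-bound \eqref{eq:boundedkme} applies to all three, and (ii) making sure the sup-norm $\|f-g\|_\infty$ appearing in the statement is the one on $C_b(\Xx,\Ss(\Yy)_{\tilde K})$, which is legitimate precisely because of assertion (1) of the theorem, so that $\|f(x)-g(x)\|_{\tilde K} \le \|f-g\|_\infty$ for every $x$. The measurability of $L^K_f$ and $L^K_g$ needed to make the integrals meaningful is supplied by Lemma \ref{lem:kme2} (equivalently Lemma \ref{lem:ACS}(1)). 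Everything else is the routine $\|a\|^2-\|b\|^2$ factorization and two applications of the triangle inequality.
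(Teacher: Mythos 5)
Your proposal is correct and follows essentially the same route as the paper: both reduce to the pointwise identity $L^K_f(x,y)-L^K_g(x,y)=\|f(x)\|_{\tilde K}^2-\|g(x)\|_{\tilde K}^2-2\langle f(x)-g(x),\delta_y\rangle_{\tilde K}$ (your single factored inner product $\langle f(x)-g(x),\, f(x)+g(x)-2\delta_y\rangle_{\tilde K}$ is just this rearranged), use the bound $\|\nu\|_{\tilde K}\le C_K$ on $\Pp(\Yy)$ from \eqref{eq:boundedkme} to get the pointwise estimate $4C_K\|f-g\|_\infty$, and then apply the triangle inequality to the expected and empirical risks to obtain $8C_K\|f-g\|_\infty$. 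No gaps.
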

\begin{proof} Let $f, g \in C(\Xx, \Pp(\Yy))$.  Using  \eqref{eq:boundedkme}, we obtain
$$\vert \| f (x)\|^2  _{\tilde K} - \| g(x)\|^2_{\tilde K}\, \vert = | \la f(x)- g(x)| f(x)\ra_{\tilde K} + \la g(x) - f(x)|- g (x)\ra_{\tilde K}|$$
$$ \le |\| f (x)  - g(x)\|_{\tilde K}|\cdot  ( \| f(x)\|_{\tilde K} + \| g (x) \|_{\tilde K}) \le  2 C_K  \|f (x) - g(x)\|_{\tilde  K}. $$
We also have
$$ | 2\la  f(x) - g(x)|\delta_y\ra_{\tilde K}|\le 2 C_K \| f (x)- g(x)\|_{\tilde K}.$$
Hence,
\begin{align}
| R^{L^K}_\mu (f) - R^{L^K}_\mu (g)| & = \big | \int_{\Xx \times \Yy} \| f(x)\|_{\tilde K} ^2   - \|g(x)\|_{\tilde K} ^2  - 2 \la f(x)-g(x)| \delta_y \ra_{\tilde K} \, d\mu(x, y) \big |\nonumber\\
& \le   4 C_K \| f -  g \| _\infty  .\label{eq:CSP3a}
\end{align}
In particular, for any $S_n  = (x_1, y_1, \ldots, x_n, y_n) \in (\Xx \times \Yy)^n$ we have
\begin{align}
\Big | R^{L^K}_{S_n} (f) -  R^{L^K}_{S_n} (g)\Big | = \frac{1}{n}  \sum_{i =1}^n \Big (\| f(x_i)\|_{\tilde K} ^2   - \|g(x_i)\|_{\tilde K} ^2\nonumber\\
- 2 \la f(x_i)-g(x_i)| \delta_{y_i} \ra_{\tilde K}\Big ) \le   4 C_K \|f - g \| _\infty .\label{eq:CSP3b}
\end{align}
Clearly  \eqref{eq:CSP3}  follows from \eqref{eq:CSP3a} and \eqref{eq:CSP3b}.
\end{proof}

\begin{lemma}\label{lem:csl1} cf. \cite[Lemma 1]{CS2001}
	Let $\Hh = D_1 \cup   \ldots \cup D_l$, where  $D_i$ are  open balls   centered  in $h_i \in \Hh$, $i \in [1, l]$,    and $\eps  >0$.   Then
\begin{align}\label{eq:lem1CS}
(\mu ^m)^* \{S \in (\Xx \times \Yy)^m:  \sup _{h \in \Hh} |R^{L^K}_\mu (h) -  \widehat  R^{L^K}_S (h)| > \eps \}\nonumber \\
\le \sum_{j =1}^l (\mu^m)^* \{S \in (\Xx \times \Yy)^m:\sup_{ h \in  D_j} |R^{L^K}_\mu (h) -  \widehat  R^{L^K}_S (h) | > \eps  \}
\end{align}
\end{lemma}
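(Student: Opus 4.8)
The plan is to derive \eqref{eq:lem1CS} purely from a set inclusion together with the monotonicity and finite subadditivity of outer measure recalled in Subsection \ref{subs:ino}. First I would record the inclusion
$$\{S \in (\Xx \times \Yy)^m : \sup_{h \in \Hh}|R^{L^K}_\mu (h) - \widehat R^{L^K}_S (h)| > \eps\} \subset \bigcup_{j=1}^l \{S \in (\Xx \times \Yy)^m : \sup_{h \in D_j}|R^{L^K}_\mu (h) - \widehat R^{L^K}_S (h)| > \eps\}.$$
Indeed, if $S$ belongs to the left-hand set, then since the supremum of the (nonnegative) quantity over $\Hh$ is \emph{strictly} greater than $\eps$, there exists $h \in \Hh$ with $|R^{L^K}_\mu (h) - \widehat R^{L^K}_S (h)| > \eps$; because $\Hh = D_1 \cup \cdots \cup D_l$, this $h$ lies in some $D_j$, and then $\sup_{h' \in D_j}|R^{L^K}_\mu (h') - \widehat R^{L^K}_S (h')| \ge |R^{L^K}_\mu (h) - \widehat R^{L^K}_S (h)| > \eps$, so $S$ lies in the $j$-th set on the right.

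Applying monotonicity of the outer measure $(\mu^m)^*$ to this inclusion, and then finite subadditivity of $(\mu^m)^*$ (the countable subadditivity recalled in Subsection \ref{subs:ino}, with all but finitely many of the sets taken empty), gives precisely \eqref{eq:lem1CS}. This is the covering-number step that mirrors \cite[Lemma 1]{CS2001}, the only difference being that here the ambient measure is replaced by its outer measure.

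I do not expect a genuine obstacle here; the single point requiring care is that the inequality defining the bad event is \emph{strict} ($> \eps$), which is exactly what guarantees the existence of a witness $h \in \Hh$ on which the deviation itself exceeds $\eps$ — a non-strict version would fail since a supremum need not be attained. I would also emphasize that no measurability of the individual events $\{S : \sup_{h \in D_j}|\cdots| > \eps\}$ is needed, precisely because the statement is phrased in terms of the outer measure $(\mu^m)^*$ rather than $\mu^m$; this is what allows us later to take the uncountable supremum over $\Hh$ without invoking any measurable-selection argument.
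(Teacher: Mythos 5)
Your proof is correct and follows essentially the same route as the paper: the inclusion of the bad event for $\Hh$ in the union of the bad events for the $D_j$, followed by monotonicity and (finite) subadditivity of the outer measure $(\mu^m)^*$. One small quibble with your commentary: a non-strict version would \emph{not} fail, since for a finite cover $\Hh=\bigcup_j D_j$ one has $\sup_{h\in\Hh}=\max_j\sup_{h\in D_j}$ directly, with no need for a witness $h$ attaining the supremum --- indeed the paper states the corresponding equivalence with $\ge\eps$.
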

\begin{proof} Lemma \ref{lem:csl1} follows  from the equivalence
$$\sup _{ h \in \Hh} |R^{L^K}_\mu (h) -  \widehat  R^{L^K}_S (h) |\ge \eps \}  \LLR  \exists  j \in [1,l] :  \sup_{h \in D_j}  |R^{L^K}_\mu (h) -  \widehat  R^{L^K}_S (h) |\ge \eps,$$
 taking into account the countable  subadditivity of outer measure.
\end{proof}
\begin{proposition}\label{prop:BCS} Assume the  condition of Theorem \ref{thm:msek}. Then for any $\eps > 0$, $\mu  \in \Pp (\Xx \times \Yy)$    and $h \in \Hh$ we have
	\begin{eqnarray}\label{eq:CSb}
	(\mu ^m)_* \{ S \in  (\Xx \times \Yy)^m:\, \sup_{h \in \Hh} |R^{L_K}_\mu  (h) - \widehat {R^{L_K}_S} (h)| \le 2\eps \} \ge \nonumber\\
	1 -\Nn (\Hh, \frac{\eps}{8C_K^2}) 2\exp (- \frac{m \eps^2}{4C_K^2})\}.
	\end{eqnarray}
\end{proposition}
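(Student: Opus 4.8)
The plan is to run the covering-number uniform-convergence argument of Cucker--Smale, with the three ingredients already in place: the pointwise Hoeffding estimate for a single hypothesis (Lemma \ref{lem:ACS}(2)), the Lipschitz-type comparison of the empirical defect $h \mapsto R^{L^K}_\mu(h) - \widehat{R^{L^K}_S}(h)$ in the sup-norm (Lemma \ref{lem:CSP3}), and the union bound for outer measure over a finite cover (Lemma \ref{lem:csl1}). Inner and outer measure are used throughout precisely so that we never have to assert measurability of the supremum over the (possibly uncountable) hypothesis class.

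First I would fix $\mu \in \Pp(\Xx \times \Yy)$ and $\eps > 0$, and choose the covering radius $s > 0$ small enough that $8C_K\, s \le \eps$; for the form displayed in \eqref{eq:CSb} one takes $s := \eps/(8C_K^2)$, which does the job once $C_K \ge 1$ (a harmless normalization of $K$). By Theorem \ref{thm:msek}(1), $\Hh \subset C_b(\Xx, \Ss(\Yy)_{\tilde K})_\infty$, and since $\Hh$ is precompact in that Banach space, $l := \Nn(\Hh, s) < \infty$ and we may write $\Hh = D_1 \cup \cdots \cup D_l$ with $D_j$ a ball of radius $s$ about some $h_j \in \Hh$. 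For $h \in D_j$ we have $\|h - h_j\|_\infty \le s$, so Lemma \ref{lem:CSP3} gives, for every $S \in (\Xx \times \Yy)^m$,
$$\bigl| \bigl(R^{L^K}_\mu(h) - \widehat{R^{L^K}_S}(h)\bigr) - \bigl(R^{L^K}_\mu(h_j) - \widehat{R^{L^K}_S}(h_j)\bigr) \bigr| \le 8C_K\, s \le \eps.$$
Hence $\bigl\{ \sup_{h \in D_j}|R^{L^K}_\mu(h) - \widehat{R^{L^K}_S}(h)| > 2\eps \bigr\} \subseteq \bigl\{ |R^{L^K}_\mu(h_j) - \widehat{R^{L^K}_S}(h_j)| > \eps \bigr\}$, and the latter set is genuinely $\mu^m$-measurable, since $L^K_{h_j}$ is measurable by Lemma \ref{lem:ACS}(1) and hence $S \mapsto \widehat{R^{L^K}_S}(h_j)$ is measurable by Lemma \ref{lem:uniform}.

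Now I would combine the pieces: by Lemma \ref{lem:ACS}(2), $\mu^m\{ |R^{L^K}_\mu(h_j) - \widehat{R^{L^K}_S}(h_j)| > \eps \} \le 2\exp(-m\eps^2/4C_K^2)$ for each $j$; by the inclusion above and monotonicity of outer measure the same bound holds for $(\mu^m)^*\{ \sup_{h\in D_j}|\cdots| > 2\eps\}$; and by Lemma \ref{lem:csl1} (countable subadditivity of outer measure applied to the cover $D_1, \dots, D_l$),
$$(\mu^m)^*\Bigl\{ S : \sup_{h \in \Hh}\bigl|R^{L^K}_\mu(h) - \widehat{R^{L^K}_S}(h)\bigr| > 2\eps \Bigr\} \le l\cdot 2\exp\!\Bigl(-\frac{m\eps^2}{4C_K^2}\Bigr) = \Nn(\Hh, s)\, 2\exp\!\Bigl(-\frac{m\eps^2}{4C_K^2}\Bigr).$$
Passing to the complementary event via \eqref{eq:complement} yields \eqref{eq:CSb}. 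The main, and essentially the only, obstacle is the measurability gap noted above: $S \mapsto \sup_{h\in\Hh}|R^{L^K}_\mu(h) - \widehat{R^{L^K}_S}(h)|$ need not be $\mu^m$-measurable, which is why the statement and the argument are phrased with inner/outer measure; the covering-plus-Lipschitz step replaces it, inside the estimate, by the finitely many honestly measurable events attached to the centers $h_j$, at the cost of the factor $2$ in the deviation level and the factor $\Nn(\Hh,s)$ in the probability. The remaining work is the routine bookkeeping of the constants $C_K$, $s$, and $2\eps$ versus $\eps$.
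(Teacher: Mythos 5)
Your proof is correct and follows essentially the same route as the paper's: cover $\Hh$ by $\Nn(\Hh,s)$ balls with $8C_K s\le\eps$, transfer the deviation from each ball to its center via Lemma \ref{lem:CSP3}, apply the Hoeffding bound of Lemma \ref{lem:ACS}(2) at the centers, combine with the outer-measure union bound of Lemma \ref{lem:csl1}, and pass to the complement by \eqref{eq:complement}. You even correctly flag the only wrinkle, namely that the paper's own proof uses radius $\eps/(8C_K)$ while the statement displays $\Nn(\Hh,\eps/(8C_K^2))$, a harmless discrepancy in the stated covering radius.
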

\begin{proof}
Let  $l = \Nn (\Hh,\frac{\eps}{8C_K} )$ and consider  $h_1, \ldots, h_l$  such that  the collection of balls $D_j$ centered  at $h_j$  with radius $\frac{\eps}{8C_K}$  covers  $\Hh$.  By Lemma \ref{lem:CSP3},  for any $S \in (\Xx \times \Yy)^m$  and $f \in D_j$  we have
$$| ( R^{L^K}_\mu (f) -  \hat  R^{L^K}_S (f))-  ( R^{L^K}_\mu (f_j) -  \hat  R^{L^K}_S (f_j)) | \le  8 C_K\frac{\eps} {8 C_K} =  \eps. $$
It follows that
\begin{equation*}
\sup _{ f \in D_j}  |  R^{L^K}_\mu (f) -  \widehat  R^{L^K}_S (f)|  \ge 2\eps  \LRA  |  R^{L^K}_\mu (f_j) -  \hat  R^{L^K}_S (f_j)| \ge \eps.
\end{equation*}
Hence,   taking into account Lemma \ref{lem:kme2}, for any $j \in [1, l]$   we obtain:
\begin{align}\label{eq:probsup}
(\mu^m)^*\{ S\in (\Xx \times \Yy)^m:\, \sup_{ f \in D_j} |R^{L^K}_\mu (f) -  \widehat  R^{L^K}_S (f)|  \ge 2\eps  \} \le\nonumber\\
 \mu^m \{ S\in (\Xx \times \Yy)^m:| R^{L^K}_\mu (f_j) -  \widehat  R^{L^K}_S (f_j)| \ge \eps  \}\nonumber \\
\text{ (by Lemma \ref{lem:ACS}) }\le     2 \exp  ( -\frac{m \eps ^2}{4 CK^2} ).
\end{align}
Taking into account Lemma \ref{lem:csl1}  and \eqref{eq:complement}, 
we obtain from  \eqref{eq:probsup}  Proposition \ref{prop:BCS}. 
\end{proof}

\begin{proof}[Proof of  Theorem \ref{thm:msek}(2)]  Using Theorem \ref{thm:uniform},   we obtain the second  assertion  of  Theorem \ref{thm:msek}(2)  from Proposition \ref{prop:BCS}. 
	
The last assertion of  Theorem \ref{thm:msek}(2) follows immediately.
\end{proof}

\section{A variant of Vapnik-Stefanyuk's theorem and  applications}\label{sec:vapnik}

\subsection{A variant  of Vapnik-Stefanyuk's regularization method}\label{subs:vapnikstoc}
In  \cite[Chapter 7]{Vapnik1998}  Vapnik  explained  methods  due  to Stefanyuk-Vapnik \cite{VS1978} to   solve  the following  operator  equation
\begin{equation}\label{eq:vapnik71}
Af = F
\end{equation}
defined  by a  continuous  operator $A$ which  maps  in a one-to one manner  the elements  $f$  of a metric  space $(E_1, \rho_{E_1})$ into  the  elements  of a metric
space $(E_2, \rho_{E_2})$  assuming that 
a solution  $f \in  E_1$  of \eqref{eq:vapnik71}  exists  and is unique. 

We consider  the situation  when    $A$ belongs  to a   space  $\Aa$ and  instead  of  Equation \eqref{eq:vapnik71}  we are given a   sequence $\{ F_{S_l} \in E_2, \, l \in \N^+\}$,  a sequence  $\{ A_{S_l} \in \Aa, l \in \N^+ \}$, where $S_l$ belongs  to a  probability space $(\Xx_l,\mu_l)$ and  $A_{S_l}, F_{S_l}$   are   defined  by a family of   maps $\Xx_l \to E_2,\,  S_l \mapsto \Ff_{S_l},$  and  $\Xx_l \to \Aa,\, S_l \mapsto A_{S_l}$.

Let $W:  E_1 \to  \R_{\ge 0}$  be a lower semi-continuous     function   that satisfies  the   following  property (W).

(W)  The sets  $\Mm_c =  W^{-1} ([0,c])$ for  $c \ge  0$ are all  compact.
 
 Given  $A_{S_l}, F_{S_l}$, and $\gamma_l >0$, let us      define  a  regularized risk function $R^*_{\gamma_l} (\cdot, F_{S_l}, A_{S_l}): E_1 \to \R$  by
\begin{equation}\label{eq:regl}
R^*_{\gamma_l} (\hat f, F_{S_l}, A_{S_l}) = \rho_{E_2}^2  (A_{S_l}\hat f, F_{S_l}) + \gamma_l W(\hat f).
\end{equation}
 We shall say   that  $f_{S_l}\in E_1$  is  an {\it $\eps_l$-minimizer}    of  $R^*_{\gamma_l}$ if
 \begin{equation}\label{eq:regel}
 R^*_{\gamma_l} (f_{S_l}, F_{S_l}, A_{S_l}) \le  R_{\gamma_l} (\hat f, F_{S_l}, A_{S_l}) + \eps_l \text{ for   all } \hat f \in  E_1.
 \end{equation}
We  shall also use the shorthand notation $A_l$ for  $A_{S_l}$, $F_l$ for  $F_{S_l}$, $f_l$ for $f_{S_l}$,
$\rho_2$ for $\rho_{E_2}$, $\rho_1$ for  $\rho_{E_1}$.
 For any $\eps_l >0$,  an $\eps_l$-minimizer  of  $R^*_{\gamma_l}$  exists.
We will measure  the closedness  of operator  $A$ and operator $A_l$ by the distance
\begin{equation}\label{eq:v712}
\|A_l - A\| = \sup_{\hat f \in E_1}\frac{\|A_l \hat f -A\hat f\|_{E_2}}{W^{1/2}(\hat f)}.
\end{equation}
\begin{theorem}\label{thm:vapnik73} cf. \cite[Theorem 7.3, p. 299]{Vapnik1998} Let $f_{S_l}$ be  a $\gamma_l^2$-minimizer of $R^*_{\gamma_l}$ in \eqref{eq:regl}  and  $f$  the  solution of \eqref{eq:vapnik71}.    For any  $\eps >0$ and any constant $C_1, C_2  >0$ there
exists a value $\gamma_0 >0$ such that    for any $\gamma_l \le \gamma_0$
	\begin{eqnarray}\label{eq:vapnik713}
(\mu_l)^* \{S_l\in \Xx_l:\, \rho_1 (f_{S_l}, f) >\eps \}\le (\mu_l)^* \{S_l \in \Xx_l:\, \rho_2  (F_{S_l}, F) > C_1 \sqrt{\gamma_l}\}\nonumber \\ + (\mu_l)^* \{ S_l \in \Xx_l:\, \|A_{S_l} - A\| > C_2\sqrt{\gamma_l} \}
	\end{eqnarray}
	holds true.
\end{theorem}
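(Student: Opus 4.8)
The plan is to reduce the probabilistic inequality \eqref{eq:vapnik713} to a purely deterministic statement about the $\gamma_l^2$-minimizers, and then invoke only the elementary properties of outer measure recalled in Subsection \ref{subs:ino}. Concretely, I would first establish the set inclusion
$$\{S_l \in \Xx_l : \rho_1(f_{S_l}, f) > \eps\} \subseteq \{S_l \in \Xx_l: \rho_2(F_{S_l}, F) > C_1\sqrt{\gamma_l}\} \cup \{S_l \in \Xx_l: \|A_{S_l} - A\| > C_2\sqrt{\gamma_l}\},$$
valid as soon as $\gamma_l \le \gamma_0$ for a suitable threshold $\gamma_0 > 0$; then \eqref{eq:vapnik713} is immediate from monotonicity and finite subadditivity of $(\mu_l)^*$. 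Equivalently, the task becomes: produce $\gamma_0 > 0$ so that, whenever $\gamma_l \le \gamma_0$, $\rho_2(F_{S_l}, F) \le C_1\sqrt{\gamma_l}$ and $\|A_{S_l} - A\| \le C_2\sqrt{\gamma_l}$, one has $\rho_1(f_{S_l}, f) \le \eps$.

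On this ``good event'' I would exploit the $\gamma_l^2$-minimizing property of $f_l := f_{S_l}$ by comparing it with the true solution $f$ (which satisfies $Af = F$; I take $f \in D$ with $W(f) < \infty$, a standing assumption in this setting). Writing $A_l := A_{S_l}$, $F_l := F_{S_l}$, the triangle inequality in $E_2$ together with \eqref{eq:v712} gives $\rho_2(A_l f, F_l) \le \|A_l - A\| W^{1/2}(f) + \rho_2(F, F_l) \le \sqrt{\gamma_l}\,(C_2 W^{1/2}(f) + C_1)$, whence
$$R^*_{\gamma_l}(f, F_l, A_l) \le \gamma_l\bigl[(C_2 W^{1/2}(f) + C_1)^2 + W(f)\bigr] =: \gamma_l C_3 .$$
The defining inequality \eqref{eq:regel} of a $\gamma_l^2$-minimizer then yields $\rho_2^2(A_l f_l, F_l) + \gamma_l W(f_l) \le \gamma_l C_3 + \gamma_l^2 \le \gamma_l(C_3 + 1)$ once $\gamma_0 \le 1$. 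From this I extract two facts: first, $W(f_l) \le C_3 + 1$, so $f_l$ lies in the fixed sublevel set $\Mm_{C_3+1}$, which is compact by property (W); and second, $\rho_2(A_l f_l, F_l) \le \sqrt{\gamma_l(C_3+1)}$, which combined once more with \eqref{eq:v712}, the bound on $W(f_l)$, and the closeness hypotheses gives $\rho_2(A f_l, F) \le C_4\sqrt{\gamma_l}$ for a constant $C_4$ depending only on $C_1, C_2, W(f)$.

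The final step — which I expect to be the only genuinely non-formal point — promotes $\rho_2(Af_l, F) \to 0$ to $\rho_1(f_l, f) \to 0$, \emph{uniformly} over the good event, by a compactness-and-uniqueness argument. If the desired $\gamma_0$ did not exist, one could choose $\gamma_{l_k} \to 0$ and points $S_{l_k}$ in the corresponding good events whose $\gamma_{l_k}^2$-minimizers $f_{l_k}$ satisfy $\rho_1(f_{l_k}, f) > \eps$. Since every $f_{l_k}$ lies in the compact set $\Mm_{C_3+1}$ (the constant $C_3$ being independent of $S_l$), a subsequence converges, $f_{l_{k_j}} \to f^* \in E_1$; continuity of $A$ gives $A f_{l_{k_j}} \to A f^*$ in $E_2$, while $\rho_2(A f_{l_{k_j}}, F) \le C_4\sqrt{\gamma_{l_{k_j}}} \to 0$ forces $A f^* = F$, so $f^* = f$ by uniqueness of the solution of \eqref{eq:vapnik71}; this contradicts $\rho_1(f_{l_{k_j}}, f) > \eps$. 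The crux is to keep all constants and the compact set $\Mm_{C_3+1}$ free of any dependence on $S_l$, which is precisely what makes a single threshold $\gamma_0$ work; the hypothesis $W(f) < \infty$ for the solution is essential for this.
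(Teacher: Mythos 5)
Your proposal is correct and follows essentially the same route as the paper: the same reduction to a deterministic implication on the ``good event'', the same use of the $\gamma_l^2$-minimizer property and the triangle inequality to get $W(f_l)$ into a fixed compact sublevel set of $W$ and $\rho_2(Af_l,F)=O(\sqrt{\gamma_l})$ with constants independent of $S_l$, followed by subadditivity and monotonicity of the outer measure. The only cosmetic difference is at the end, where the paper invokes Vapnik's lemma that a continuous injective operator on a compact set has continuous inverse (yielding an explicit modulus $\delta(\eps)$ and hence an explicit $\gamma_0$), while you inline the proof of that lemma as a sequential-compactness contradiction; the two are interchangeable.
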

\begin{remark}\label{rem:vapnik73} Note  that  Theorem \ref{thm:vapnik73}  is a slight  generalization of  \cite[Theorem 7.3, p. 299]{Vapnik1998}, where   Vapnik considered  the case   that  $f_l$  is a minimizer  of $R^*_{\gamma_l}$.  Our  proof  of  Theorem \ref{thm:vapnik73} follows   the arguments in  Vapnik's  proof  of \cite[Theorem 7.3]{Vapnik1998}, carefully  estimating all ``$\eps, \delta$"  for  outer  measure, instead  of   measure  as in  Vapnik's  proof, which   requires  also   measurablity  of  sets  involved.
\end{remark}

\begin{proof}[Proof of Theorem \ref{thm:vapnik73}]  Since   $f_{l}$ is a $\gamma_l^2$-minimizer of   $R^*_{\gamma_l} (\cdot, F_l, A_l)$,  by \eqref{eq:regel}
we have 
\begin{align}\label{eq:v723}
\gamma_l W(f_{l}) &\le R^*_{\gamma_l}(f_l, F_l, A_l)\le R^*_{\gamma_l}(f, F_l, A_l) + \gamma_l^2\\
& = \rho^2_2 (A_lf, F_l) + \gamma_l W(f) + \gamma_l^2 
\end{align}
where $f$ is the desired   solution of \eqref{eq:vapnik71}. From \eqref{eq:v723}  we find
\begin{equation}\label{eq:vapnik723a}
W(f_l) \le W(f)  + \frac{ \rho_2 ^2  (A_l f, F_l)}{\gamma_l} + \gamma_l.
\end{equation}
Since  according  to  the triangle  inequality, we have
\begin{align}
\rho_2 (A_l f, F_l)  & \le \rho_2  (A_lf, F) + \rho_2  (F, F_l)\nonumber\\
& \le  \| A_l -A\|  W ^{1/2} (f)  + \rho_2  (F, F_l), \label{eq:vapnik724}
\end{align}
we obtain 
\begin{equation}\label{eq:vapnik725}
W(f_l)\le W(f) +\frac{1}{\gamma_l} 
\Big( \| A_l - A\| W^{1/2}(f) + \rho_2  (F, F_l)\Big)^2 + \gamma_l.
\end{equation}
Since $f_l$ is a $\gamma_l^2$-minimizer of  $R^*_{\gamma_l} (\cdot,  F_l, A_l)$, we have
\begin{equation}\label{eq:vapnik725a}
\rho^2_2 (A_lf_l, F_l) \le R^*_{\gamma_l} (f_l, F_l, A_l) + \gamma_l^2.
\end{equation}
From \eqref{eq:vapnik725a}, \eqref{eq:v723}, and \eqref{eq:vapnik724}, we obtain
\begin{equation}\label{eq:vapnik726}
\rho^2_2 (A_lf_l,  F_l) \le \gamma_l W(f)  + \Big (  \| A_l -A\| W^{1/2} (f) + \rho_2 (F, F_l) \Big ) + \gamma_l^2.
\end{equation}
From this, using \eqref{eq:v712} and \eqref{eq:vapnik724}, we derive
\begin{align}
\rho_2 (Af_l, F)&\le \rho_2 (Af_l, A_l f_l) + \rho_2 (A_l f_l, F_l) + \rho_2 (F_l, F)\nonumber\\
&\le  W^{1/2} (f_l) \| A_l-A\| \nonumber\\
 & +\Big (\gamma_l W(f) + \big ( \|A_l - A\| W^{1/2}(f) + \rho_2 (F, F_l)  \big )^2 + \gamma_l^2 \Big)^{1/2}\nonumber\\
 & + \rho_2 (F_l, F) = \sqrt{\gamma_l} \Big ( \frac{\rho_2 (F_l, F)}{\sqrt{\gamma_l}} + W^{1/2} (f) \frac{ \| A_l - A\|} {\sqrt{\gamma_l}}\Big ) \nonumber\\
 & + \sqrt{\gamma_l} \Big ( W(f) + \big ( \frac{\rho_2 (F_l, F)}{\sqrt{\gamma_l}}  + W^{1/2} (f) \frac{\| A_l -A\|}{\sqrt{\gamma_l}}\big)^2 +  \gamma_l ^{3/2}\Big ) ^{1/2}\label{eq:v727}
\end{align}
Given $C_1, C_2 >0$   we set
\begin{equation}\label{eq:as727a}\Cc_l:= \{ S_l \in \Xx_l :   \frac{ \rho_2 (F, F_l)}{\sqrt {\gamma _l}} \le C_1 \text{ and } \frac{\| A_l -A\|}{\sqrt{\gamma_l}} \le C_2\}.
\end{equation}

Now  assume that
\begin{equation}\label{eq:as727b}
 S_l \in \Cc_l.
 \end{equation}
From  \eqref{eq:vapnik725}  we have
\begin{equation}\label{eq:v728}
 W(f_l) \le  W(f) +  (C_1 + C_2 W^{1/2} (f)) ^2  + \gamma_l = d + \gamma_l < \infty
\end{equation}
where 
\begin{equation}\label{eq:Wd}
d =  W(f) +  (C_1 + C_2 W^{1/2} (f)) ^2  >0.
\end{equation} 
From   \eqref{eq:v727}, using \eqref{eq:as727a} and \eqref{eq:v728},  we obtain
\begin{align}\label{eq:v729}
\rho_2 (Af_l, F)  &\le \sqrt{\gamma_l} (C_1  +  W ^{1/2}(f) C_2)\nonumber\\
& + \sqrt{\gamma_l} (d + \gamma_l ^{3/2}) ^{1/2} \nonumber\\
  & \le \sqrt{\gamma_l} ( \sqrt{d} + \sqrt{d + \gamma_l^{3/2}} )\nonumber\\
  	&  \le  2\sqrt{\gamma_l}  \sqrt{d + \gamma_l ^{3/2}}.
\end{align}
Now  we pose  the following condition on $\gamma_l$:
\begin{equation}\label{eq:gammal1}
\gamma_l \le \min\{ d, d ^{2/3} \}.
\end{equation}
Taking into account \eqref{eq:Wd} and the properties  of  the functional $W:D \to \R_{\ge 0}$, inequality  \eqref{eq:v728} implies that,  under the condition \eqref{eq:gammal1}  on $\gamma_l$, both $f$ and  $f_l$  belong to  the compactum $W^{-1} ([0,2d])$. 

\begin{lemma}\label{lem:vapnikl1}\cite[Lemma in p. 53]{Vapnik1998}. Let $E_1, E_2$ be metric spaces.
If $A:E_1 \to E_2$ is a  one-to one  continuous operator  defined   on  a compact set $M \subset E_1$,  then  the inverse  operator  $A^{-1}$ is continuous  on the set  $N =  A(M)$. 
\end{lemma}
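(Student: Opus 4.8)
The plan is to invoke the standard principle that a continuous bijection from a compact space onto a Hausdorff space is a homeomorphism, and to supply a short self-contained sequential proof since $E_1$ and $E_2$ are metric. First I would observe that the restriction of $A$ to $M$ is a continuous bijection onto $N := A(M)$, so the inverse map $A^{-1}\colon N \to M$ is well-defined; it then remains only to check that $A^{-1}$ is continuous at an arbitrary point $y_0 = A(x_0) \in N$, where $x_0 \in M$.

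Next, to establish sequential continuity of $A^{-1}$ (which suffices in a metric space), I would take any sequence $y_n \to y_0$ in $N$ and write $y_n = A(x_n)$ with $x_n \in M$; the goal is $x_n \to x_0$. Arguing by contradiction, if $x_n \not\to x_0$ there are an $\eps > 0$ and a subsequence $(x_{n_k})$ with $\rho_1(x_{n_k}, x_0) \ge \eps$ for all $k$. Since $M$ is compact, hence sequentially compact, I may pass to a further subsequence along which $x_{n_k} \to x' \in M$; then $\rho_1(x', x_0) \ge \eps$, so $x' \neq x_0$. Continuity of $A$ gives $A(x_{n_k}) \to A(x')$, while at the same time $A(x_{n_k}) = y_{n_k} \to y_0 = A(x_0)$; by uniqueness of limits in the metric space $E_2$ this forces $A(x') = A(x_0)$, contradicting the injectivity of $A$ since $x' \neq x_0$. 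Hence $x_n \to x_0$, so $A^{-1}$ is continuous at $y_0$, and since $y_0 \in N$ was arbitrary, $A^{-1}$ is continuous on $N = A(M)$.

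There is essentially no serious obstacle here: the argument is routine point-set topology. The only two points worth stating carefully are that a compact metric space is sequentially compact — this is what legitimizes extracting the convergent subsequence $x_{n_k} \to x'$ — and that the Hausdorff (metric) property of $E_2$ forces $A(x') = A(x_0)$ from the two convergent sequences, after which injectivity of $A$ delivers the contradiction. Equivalently, one could phrase the proof purely topologically: $A$ sends closed subsets of the compact set $M$ to compact, hence closed, subsets of $N$, so $A$ is a closed map and therefore $A^{-1}$ is continuous; I would likely record this reformulation as a brief remark.
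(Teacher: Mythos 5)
Your proof is correct: the sequential-compactness argument (extract a convergent subsequence, use uniqueness of limits in the metric space $E_2$ and injectivity of $A$ to derive a contradiction) is the standard proof of this fact, and the closed-map reformulation you mention is an equally valid alternative. The paper itself does not prove this lemma at all — it is quoted directly from Vapnik's book as a known result — so there is no authorial argument to compare against; your write-up supplies exactly the routine point-set topology that the citation presupposes.
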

By Lemma \ref{lem:vapnikl1},  for  any $\eps  > 0$ there exists   $ \delta(\eps)  >0$  such that
\begin{equation}\label{eq:v730}
\rho_2 (Af_l, Af) \le \delta(\eps)   \LRA  \rho_1 (f, f_l) \le \eps.
\end{equation}
Now we set  
\begin{equation}\label{eq:Lgamma0}
\gamma_0 = \min\{d, d ^{2/3}, \big (\frac{ \delta (\eps)^2}{8d})\}   > 0.
\end{equation}
Equations \eqref{eq:gammal1},  \eqref{eq:Lgamma0}, and \eqref{eq:v729}  imply  that   for   $\gamma_l \le \gamma_0$   we have
 \begin{equation}\label{eq:epsdelta}
 \rho_1 (f, f_l) \le \eps.
 \end{equation} 
 Hence, the condition \eqref{eq:Lgamma0}  for $\gamma_0$ implies that, for all $\gamma_l \le  \gamma_0$, by \eqref{eq:as727a}  and \eqref{eq:as727b} and taking into account  the subadditivity of outer measure, we have
$$(\mu_l)^*\big\{S_l: \rho_1 (f_l, f)> \eps\big\} \le (\mu_l)^* \big\{ S_l: 
\frac{\rho_2 (F_l, F)}{\sqrt{\gamma_l}} > C_1\big\} + 
(\mu_l)^*\big \{ S_l:\frac{\|A_l -A\|}{\sqrt {\gamma_l}} > C_2\big\}.$$
Note that $\gamma_0 = \gamma (C_1, C_2, W(f),  A^{-1}, \eps)$, where $C_1, C_2, \eps$ are arbitrary  fixed constant.  This completes the proof of Theorem \ref{thm:vapnik73}.
\end{proof}

\begin{remark}\label{rem:vapnik}
In \cite[\S 7.10]{Vapnik1998} Vapnik  applied   Stefanyuk-Vapnik's theorem \cite[Theorem 7.3, p. 299]{Vapnik1998}  for    estimation of  smooth conditional  probability  densities  on a line. 
 He also  presented  a method  of  estimation  of a  conditional  probability    $\mu_{\Yy|\Xx}$ where  $\Yy$ is finite  and  $\Xx = \R$ \cite[\S 7.12]{Vapnik1998}. In these cases  Vapnik assumed  that the hypothesis  spaces are finite  dimensional  and   explicit  minimum  of  the  expected  risk function  can be found.  In \cite{VI2019}    Vapnik and Izmailov    consider  estimation  of a  conditional  probability    $\mu_{\Yy|\Xx}$ where  $\Yy$ is finite  and   $\Xx \subset \R^n$, using    cumulative functions.  We shall show  an  application  of Theorem \ref{thm:vapnik73} below  for proving  the consistency  of a model  for  estimating   conditional  probability where $\Xx, \Yy$ are Polish subsets  in $\R^m$.  
\end{remark}

For metric  spaces  $F_1, F_2$,   we denote  by  $C_{Lip} (F_1, F_2)$  the  space of all  Lipschitz continuous  mappings  from  $F_1$ to $F_2$.
By Lemma \ref{lem:diag}(1),  $\Gamma_h: \Xx \to \Mm (\Xx \times \Yy)_{\tilde  K_1}$ is continuous, since  $h \in C (\Xx, \Mm (\Yy)_{\tilde K_2})$.  Since $\Gamma_h$ is continuous,  the function $x\mapsto \| \Gamma_h  (x)\| _{\tilde K_1}$  is continuous, and it is bounded, since $\Xx$ is compact.
Now we   define  the  distance  $d_M$ on $C (\Xx, \Mm (\Yy)_{\tilde K_2})$ by
\begin{equation}\label{eq:M}
d_M (f, f') = \sup _{x\in \Xx}(\| (f -f') (x)\|_{\tilde   K_2} +\| \Gamma_f  (x) - \Gamma_{f'}(x)\|_{\tilde K_1}).
\end{equation}

In other  words,  the metric $d_M$ is induced  by the norm $\|\cdot \|_M$ defined on the space $C (\Xx,\Ss(\Yy)_{\tilde  K_2})$ as follows:
$$\| f\|_M = \sup _{x \in  \Xx}  (\| f (x)\|_{\tilde K_2} + \|  \Gamma_f (x)\|_{\tilde K_1}).$$
The space  $C (\Xx, \Mm (\Yy)_{\tilde K_2})$   endowed  with the metric $d_M$  shall be denoted by  $C(\Xx, \Mm (\Yy)_{\tilde K_2})_M$.

  \begin{theorem}\label{thm:main}  Let  $\Xx $ be a compact subset in $\R^n \times \{0\}\subset \R^{n +m}$   and $\Yy$ a     compact subset  in $\{0\}\times \R^m$.   Let $K_2: \Yy \times \Yy \to \R $ be  the restriction  of a continuous  bounded SPD kernel $K$  on $\R^{m +n} \times \R^{m+n}$, which  satisfies    the conditions of Propositions \ref{prop:srithm32}  and \ref{prop:lmst2015}.
  	Denote by $\Pp_{Lip}(\Xx, \Pp(\Yy)_{\tilde K_2}, vol_\Xx)$  the set  of all   probability measures $\mu \in \Pp (\Xx \times \Yy)$  such that:\\
  	(i) ${\rm  sppt}\,\mu_\Xx  = \Xx$,  where $\mu_\Xx = (\Pi_\Xx)_*\mu$; \\
  	(ii)  there  exists a   regular   conditional   measure $\mu_{\Yy|\Xx} \in C_{Lip} (\Xx, \Pp (\Yy)_{\tilde K_2})$ for $\mu$  with respect to  the projection $\Pi_\Xx: \Xx \times \Yy \to \Xx$. 
  	
  	Let  $K_1: (\Xx \times \Yy)\times (\Xx \times \Yy)  \to \R$  be  the restriction  of the kernel $K$ to  $(\Xx \times \Yy)$. We define   a  loss function
  	\begin{equation}\label{eq:lostvk}
  	R^{K_1}:  C_{Lip}(\Xx, \Pp(\Yy)_{\tilde K_2})\times \Pp_{Lip} (\Xx, \Pp(\Yy)_{\tilde K_2}, vol_\Xx)\to \R_{\ge 0},  (h, \mu) \mapsto \| (\Gamma_h)_*\mu_\Xx -\mu\|_{\tilde K_1}.
  	\end{equation}
Then   for any  $\mu \in \Pp_{Lip}  (\Xx, \Pp(\Yy)_{\tilde K_2}, vol_\Xx)$, there  exists a  consistent    \grey{$C$-ERM} algorithm  
  	$A$  for  the supervised  learning model $(\Xx, \Yy, C_{Lip} (\Xx, \Pp (\Yy)_{K_2}),R^{K_1},\\  \Pp_{Lip} (\Xx, \Pp(\Yy)_{\tilde K_2}, vol_\Xx))$. Moreover,  for  any $\eps, \delta > 0$  there exists  $N(\eps, \delta)$ such that  for any $n \ge N(\eps, \delta)$ we have
 \begin{equation}\label{eq:van1}
 (\mu ^n)^*\{ S_n \in  (\Xx\times \Yy)^n :  d_M (A(S_n),\mu_{\Yy|\Xx}) > \eps \} \le \delta,
 \end{equation} 
 where $\mu_{\Yy|\Xx}\in C_{Lip}(\Xx, \Pp(\Yy)_{\tilde K_2}$ is  the unique  regular conditional  probability measure  for $\mu$ with respect to the  projection $\Pp_\Xx: \Xx \times \Yy \to \Xx$. 
  \end{theorem}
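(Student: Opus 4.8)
The strategy is to realize the learning problem as an instance of Vapnik's regularized-inversion scheme (Theorem~\ref{thm:vapnik73}) for the operator equation $A\,h = F$, where the unknown $h$ ranges over $E_1 = C_{Lip}(\Xx,\Pp(\Yy)_{\tilde K_2})$ equipped with the sup-norm $d_\infty$, the data space $E_2$ is $\Ss(\Xx\times\Yy)_{\tilde K_1}$, and the operator is $A\,h := (\Gamma_h)_*\mu_\Xx$, with right-hand side $F := \mu$. By Theorem~\ref{thm:marginal}(1), the equation $A\,h = \mu$ has a solution, which by assumption (ii) lies in $E_1$ and by Theorem~\ref{thm:marginal}(2) is unique as a $\mu_\Xx$-a.e.\ class; combined with $\mathrm{sppt}\,\mu_\Xx = \Xx$ and the fact that elements of $E_1$ are continuous, the solution $\mu_{\Yy|\Xx}$ is genuinely unique in $E_1$. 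The first preparatory step is to check that $A$ is a continuous injective operator on $E_1$: injectivity up to $\mu_\Xx$-null sets comes from Theorem~\ref{thm:marginal}(4), and is upgraded to honest injectivity by continuity of competitors plus full support of $\mu_\Xx$; continuity in the relevant metrics follows from Proposition~\ref{prop:tcontinuous} (since $\Yy$ is compact metrizable) together with the continuity of the kernel mean embedding $\mathfrak M_{K_1}$ in $\tau_w$ from Lemma~\ref{lem:bounded}(3). One must also exhibit the regularizer $W$ with compact sublevel sets: here I would take $W$ to be (a power of) a Lipschitz-seminorm-type functional on $C_{Lip}(\Xx,\Pp(\Yy)_{\tilde K_2})$ — using that $\Xx$ is compact and $\Pp(\Yy)_{\tilde K_2}$ is a bounded subset of a Hilbert space, Arzelà–Ascoli gives precompactness of sublevel sets $W^{-1}([0,c])$, so property (W) holds; closedness of these sets needs lower semicontinuity of $W$, which holds for a sup-of-difference-quotients seminorm.

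**Instantiating the empirical approximations.** Given a sample $S_n \in (\Xx\times\Yy)^n$, I would set $F_{S_n} := \mu_{S_n}$ (the empirical measure) and keep the \emph{same} operator, $A_{S_n} := A$, so that $\|A_{S_n}-A\| = 0$ identically and the second term on the right of \eqref{eq:vapnik713} vanishes. The $\gamma_n^2$-minimizer $f_{S_n}$ of $R^*_{\gamma_n}(\cdot, \mu_{S_n}, A) = \rho_{\tilde K_1}^2((\Gamma_{\hat h})_*\mu_\Xx, \mu_{S_n}) + \gamma_n W(\hat h)$ is exactly a $C$-ERM algorithm output for the loss $R^{K_1}$ with a decaying regularization/tolerance schedule; such $\gamma_n^2$-minimizers exist by the remark preceding Theorem~\ref{thm:vapnik73}. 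Theorem~\ref{thm:vapnik73} then reduces the goal \eqref{eq:van1} to controlling
\[
(\mu^n)^*\Bigl\{ S_n : \rho_{\tilde K_1}(\mu_{S_n}, \mu) > C_1\sqrt{\gamma_n}\Bigr\}.
\]
But $\rho_{\tilde K_1}(\mu_{S_n},\mu) = \|\mathfrak M_{K_1}(\mu_{S_n}) - \mathfrak M_{K_1}(\mu)\|_{\Hh(K_1)}$, viewing $\mu_{S_n}$ as an empirical measure on $\Xx\times\Yy$, so this is precisely the quantity controlled by Proposition~\ref{prop:lmst2015} (applied on the space $\Xx\times\Yy \subset \R^{n+m}$ with the kernel $K$, whose restriction $K_1$ satisfies the required hypotheses by assumption). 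Concentration gives the bound $2\sqrt{\int K_1(z,z)\,d\mu(z)/n} + \sqrt{2\log(1/\varepsilon)/n} \le C_1\sqrt{\gamma_n}$ with probability (hence inner measure) at least $1-\varepsilon$, provided $\gamma_n$ is chosen to decay slower than $1/n$, e.g.\ $\gamma_n = n^{-1/2}$. Choosing $\varepsilon = \delta$ and then $N(\eps,\delta)$ large enough that $\gamma_n \le \gamma_0(C_1,C_2,W(\mu_{\Yy|\Xx}),A^{-1},\eps)$ from Theorem~\ref{thm:vapnik73} \emph{and} the concentration bound kicks in, we obtain \eqref{eq:van1}.

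**Main obstacle.** The delicate point is verifying property (W) — that the chosen regularizer $W$ has \emph{compact} sublevel sets in $(C_{Lip}(\Xx,\Pp(\Yy)_{\tilde K_2}), d_\infty)$ — and simultaneously that the solution $\mu_{\Yy|\Xx}$ actually has finite $W$-value (needed so that $\gamma_0>0$ and the estimate $W(f_n) \le d + \gamma_n$ in the proof of Theorem~\ref{thm:vapnik73} is non-vacuous). Compactness requires uniform equicontinuity of a sublevel family, which follows from the sup-bound on Lipschitz constants together with compactness of $\Xx$, but one must be careful that the target $\Pp(\Yy)_{\tilde K_2}$ is a \emph{complete} metric space (it is a closed bounded subset of the Hilbert space $\Hh(K_2)$ under $\mathfrak M_{K_2}$, using Lemma~\ref{lem:ssep} and injectivity/continuity of $\mathfrak M_{K_2}$ from Proposition~\ref{prop:srithm32}) so that Arzelà–Ascoli applies and the limit stays in $C_{Lip}$ with controlled constant. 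A secondary subtlety is that Theorem~\ref{thm:vapnik73} is stated for a sequence of problems indexed by $l$ over abstract probability spaces $(\Xx_l,\mu_l)$; here I would specialize to $\Xx_l = (\Xx\times\Yy)^l$ with $\mu_l = \mu^l$ for a \emph{fixed} $\mu$, and note that the bound $\gamma_0$ depends on $\mu$ only through $W(\mu_{\Yy|\Xx})$ and $A^{-1}$, which is why the statement is "for any $\mu$" rather than uniform — exactly matching the non-uniform quantifier structure of \eqref{eq:van1}.
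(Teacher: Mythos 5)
Your overall scaffolding (casting the problem as Vapnik's stochastic ill-posed equation, checking injectivity and continuity of $h\mapsto(\Gamma_h)_*\mu_\Xx$ via Theorem~\ref{thm:marginal} and full support of $\mu_\Xx$, compact sublevel sets via Arzel\`a--Ascoli, and concentration of empirical kernel mean embeddings via Proposition~\ref{prop:lmst2015}) matches the paper's proof. But there is a genuine gap in how you instantiate the empirical problem: you set $A_{S_n}:=A$, i.e.\ you keep the operator $\hat h\mapsto(\Gamma_{\hat h})_*\mu_\Xx$ built from the \emph{true} marginal $\mu_\Xx$, so that $\|A_{S_n}-A\|=0$. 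This is not available to the learner: $\mu_\Xx$ is unknown, so the map $S_n\mapsto f_{S_n}$ you define is not a learning algorithm in the sense of Definition~\ref{def:aserm}. Worse, it is not a $C$-ERM algorithm for the stated loss \eqref{eq:lostvk}: the empirical risk $\widehat R_{S_n}(h)=R^{K_1}(h,\mu_{S_n})$ is $\|(\Gamma_h)_*(\mu_{S_n})_\Xx-\mu_{S_n}\|_{\tilde K_1}$, which involves the \emph{empirical} marginal $(\mu_{S_n})_\Xx$, whereas your objective $\rho^2((\Gamma_{\hat h})_*\mu_\Xx,\mu_{S_n})+\gamma_n W(\hat h)$ involves $\mu_\Xx$; an approximate minimizer of the latter need not approximately minimize the former, so the $C$-ERM property is unverified.

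The paper's proof instead takes $A_{S_n}=\widehat{(\mu_{S_n})_\Xx}$ (the operator built from the empirical marginal) together with $F_{S_n}=\mu_{S_n}$, so \emph{both} terms on the right of \eqref{eq:vapnik713} are present and each is controlled by Proposition~\ref{prop:lmst2015} (applied once on $\Xx\times\Yy$ and once on $\Xx$). This forces two further ingredients that your plan omits because you made the second term vanish: the regularizer must contain the operator norm $\|\Gamma_{\underline f}\|_{\tilde K_3,\tilde K_1}$ of \eqref{eq:normvan} so that the normalized distance \eqref{eq:v712} satisfies $\|A_{S_n}-A\|\le\|(\mu_{S_n})_\Xx-\mu_\Xx\|_{\tilde K_3}$ (this is the computation \eqref{eq:vnorm1}), and one must prove lower semicontinuity of $f\mapsto\|\Gamma_{\underline f}\|_{\tilde K_3,\tilde K_1}$ (Proposition~\ref{prop:W}), which is the most technical part of the paper's argument. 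One must also check that the empirical equation is consistent with the hypothesis class, i.e.\ that $\inf_{h}\widehat R_{S_n}(h)=0$ is attained in $C_{Lip}(\Xx,\Pp(\Yy)_{\tilde K_2})$ (Proposition~\ref{prop:empcl}); your proposal does not address this. To repair your proof you should replace the true operator by the empirical one and add these three verifications; the rest of your argument then goes through essentially as in the paper.
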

  
  \begin{remark}\label{rem:van1}  It follows from  Inequality \eqref{eq:estb} below that there exists
  	a positive constant $a$ depending only on $K$  such that   $ a \cdot d_M (h, \mu_{\Yy|\Xx}))\ge   R ^{K_1} _\mu (h)$  fro any $\mu \in \Pp_{Lip} (\Xx, \Pp(y)_{\tilde K_2}, vol_\Xx)$ and $h \in C_{Lip} (\Xx, \Pp (\Yy))$. Hence  we obtain  from \eqref{eq:van1}  the following  estimation
  	\begin{equation}\label{eq:van2}
  		(\mu ^n)^*\{ S_n \in  (\Xx\times \Yy)^n :  R^{K_1}_{\mu}  (A(S_n))\ge a \cdot\eps \} \le \delta.
  	\end{equation} 
  \end{remark}
  
A   proof of    Theorem \ref{thm:main} shall  be given   in the next subsection, in particular the uniqueness of $\mu_{\Yy|\Xx}$ in \eqref{eq:van1} shall be proved in Lemma \ref{lem:uniq}.  We  shall   provide   the  uniform consistency of a  learning algorithm $A$  for a  supervised learning model  $(\Xx, \Yy, C_{Lip} (\Xx, \Pp (\Yy)_{K_2}),R^{K_1},  \Pp_{\Xx \times \Yy})$, where  $\Pp_{\Xx \times \Yy} \subset   \Pp_{Lip} (\Xx, \Pp(\Yy)_{\tilde K_2}, vol_\Xx))$, in Corollary \ref{cor:main}   at the  end of the next subsection.

\subsection{Proof  of  Theorem \ref{thm:main}}

Assume the  condition  of  Theorem \ref{thm:main}.
We shall  apply Theorem \ref{thm:vapnik73} to  prove  Theorem \ref{thm:main}. First we  shall prove  Lemmas \ref{lem:continuity}, \ref{lem:uniq}  and Propositions  \ref{prop:empcl}, \ref{prop:W}  to ensure  that  the  conditions  of Theorem \ref{thm:vapnik73}     are satisfied. We also prove Lemma \ref{lem:continuous3}, which shall guide us  to define the   lower semi-continuous function $W: C_{Lip}(\Xx, \Pp (\Yy)_{\tilde  K_2}) \to \R$  in  Proposition \ref{prop:W}, which is the most delicate technical part  of the  proof. Lemma \ref{lem:continuous3} is also important in the proof of Corollary \ref{cor:main}.

For  $h \in C (\Xx, \Mm (\Yy)_{\tilde K_2})$, we   use the notation  $\Gamma_h$  instead of 
$$ \overline {\Gamma_{\underline h}}:  \Xx \to \Mm (\Xx \times \Yy),  x \mapsto \overline{\Gamma_{\underline h}}  (x) = \delta_x \cdot  h(x) .$$

\begin{lemma}\label{lem:continuity}  For any  $\mu \in \Ss(\Xx)$,  the map  
	$$\widehat \mu:  C (\Xx, \Mm (\Yy)_{\tilde K_2})_{M}  \to \Ss (\Xx \times \Yy)_{\tilde K_1} , \,  h \mapsto  (\Gamma_{\underline h} )_*\mu $$
	is continuous. If  ${\rm  sppt}\,\mu  = \Xx$,   then  $\mu$ maps  $C_b (\Xx, \Mm (\Yy)_{\tilde K_2})$  1-1 onto    its image.
\end{lemma}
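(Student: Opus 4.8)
The plan is to handle the two assertions separately. For continuity, the first observation is that $\widehat\mu$ is \emph{linear} in $h$: since the multiplication $\mathfrak m$ of Lemma~\ref{lem:diag}(1) is bilinear, $\overline{\Gamma_{\underline h}}(x)=\delta_x\cdot h(x)$ depends linearly on $h(x)$, so $\Gamma_{\underline{h_1+h_2}}=\Gamma_{\underline{h_1}}+\Gamma_{\underline{h_2}}$ (cf.\ Lemma~\ref{lem:almostsurely}(3)), and pushforward by the fixed $\mu$ preserves linear combinations. Hence it suffices to produce a constant $c(\mu)$ with $\|\widehat\mu(h)\|_{\tilde K_1}\le c(\mu)\,\|h\|_M$. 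Writing $\nu=(\Gamma_{\underline h})_*\mu$ and applying the identity \eqref{eq:jointmeasureb} twice (once to the outer and once to the inner integral of $\int\!\int \tilde K_1\,d\nu\,d\nu$), together with a Fubini rearrangement which is legitimate because $\tilde K_1$ is bounded and all measures involved are finite, one gets
$$\|\widehat\mu(h)\|_{\tilde K_1}^2=\int_\Xx\int_\Xx\langle\Gamma_h(x),\Gamma_h(x')\rangle_{\tilde K_1}\,d\mu(x)\,d\mu(x'),$$
where $\langle\Gamma_h(x),\Gamma_h(x')\rangle_{\tilde K_1}=\int_\Yy\int_\Yy\tilde K_1((x,y),(x',y'))\,dh(x)(y)\,dh(x')(y')$ is the inner product of the kernel mean embeddings of $\delta_x\cdot h(x)$ and $\delta_{x'}\cdot h(x')$. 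Bounding the integrand by $\|\Gamma_h(x)\|_{\tilde K_1}\|\Gamma_h(x')\|_{\tilde K_1}$ and using $\|\Gamma_h(x)\|_{\tilde K_1}\le\|h\|_M$ by the very definition of $\|\cdot\|_M$, we obtain $\|\widehat\mu(h)\|_{\tilde K_1}\le|\mu|(\Xx)\,\|h\|_M$. By linearity this is a Lipschitz estimate for $\widehat\mu$; it also shows that $\widehat\mu$ indeed takes values in $\Ss(\Xx\times\Yy)_{\tilde K_1}$, so the map is well defined on $C(\Xx,\Mm(\Yy)_{\tilde K_2})_M$.

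For the injectivity claim, assume ${\rm sppt}\,\mu=\Xx$ and $\widehat\mu(h)=\widehat\mu(h')$ with $h,h'\in C_b(\Xx,\Mm(\Yy)_{\tilde K_2})$. By the linearity just established, $(\Gamma_{\underline{h-h'}})_*\mu=0$, and the argument proving Lemma~\ref{lem:almostsurely}(1), adapted to signed $\mu$ via the Jordan-Hahn decomposition and to $\Ss(\Yy)$-valued kernels, yields $\overline h=\overline{h'}$ $|\mu|$-a.e.\ on $\Xx$. Now consider $g(x):=\|h(x)-h'(x)\|_{\tilde K_2}$. Since $x\mapsto\mathfrak M_{\tilde K_2}(h(x))$ and $x\mapsto\mathfrak M_{\tilde K_2}(h'(x))$ are continuous maps into the Hilbert space $\Hh(K_2)$, the function $g$ is continuous, and it vanishes $|\mu|$-a.e. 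If $g(x_1)>0$ for some $x_1$, then $\{g>0\}$ is a nonempty open set, hence of positive $|\mu|$-measure because ${\rm sppt}\,|\mu|=\Xx$, contradicting $g=0$ $|\mu|$-a.e. Therefore $g\equiv0$; since $K$, and hence its restriction $\tilde K_2$ by Lemma~\ref{lem:ssep}(1), satisfies the injectivity condition \eqref{eq:inj} of Proposition~\ref{prop:srithm32}, $\|\cdot\|_{\tilde K_2}$ is a genuine norm on $\Ss(\Yy)$ and $g\equiv0$ forces $h=h'$. Thus $\widehat\mu$ is injective.

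The step requiring most care is the passage from $\int\!\int\tilde K_1\,d\nu\,d\nu$ to the scalar double integral over $\Xx\times\Xx$: one must justify that \eqref{eq:jointmeasureb} applies to the bounded measurable integrand $x\mapsto\int\tilde K_1((x,y),\cdot)\,d\nu$, and that the subsequent interchange of the four finite integrals is valid. Equivalently, one may phrase this through the Bochner integral, $\mathfrak M_{\tilde K_1}(\widehat\mu(h))=\int_\Xx\mathfrak M_{\tilde K_1}(\Gamma_h(x))\,d\mu(x)$, the integrand being continuous and bounded on the compact space $\Xx$ (continuity of $x\mapsto\Gamma_h(x)\in\Mm(\Xx\times\Yy)_{\tilde K_1}$ was noted before the lemma), in which case the triangle inequality for the Bochner integral gives the same bound $\|\widehat\mu(h)\|_{\tilde K_1}\le\int_\Xx\|\Gamma_h(x)\|_{\tilde K_1}\,d|\mu|(x)\le|\mu|(\Xx)\|h\|_M$. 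The remaining ingredients — linearity, the full-support argument upgrading $|\mu|$-a.e.\ equality to everywhere equality, and injectivity of the kernel mean embedding — are routine.
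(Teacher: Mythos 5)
Your proof is correct and follows essentially the same route as the paper: the continuity estimate rests on the same expansion of $\|(\Gamma_{\underline h})_*\mu\|_{\tilde K_1}^2$ as a double integral of kernel inner products over $\Xx\times\Xx$ (via \eqref{eq:tildek} and \eqref{eq:jointmeasureb}), bounded using $\|\Gamma_h(x)\|_{\tilde K_1}\le\|h\|_M$ — the paper phrases this as a polarization estimate on $d_M$-bounded sets rather than through linearity plus a global Lipschitz bound, but the content is identical. For the injectivity the paper merely cites Lemma~\ref{lem:almostsurely}(1), which only yields $|\mu|$-a.e.\ equality; your additional step — upgrading to everywhere equality via continuity of $x\mapsto\mathfrak M_{\tilde K_2}(h(x))$, the hypothesis ${\rm sppt}\,\mu=\Xx$, and injectivity of $\mathfrak M_{\tilde K_2}$ — supplies exactly the detail needed to conclude genuine injectivity on $C_b(\Xx,\Mm(\Yy)_{\tilde K_2})$, so your argument is in fact more complete on that point.
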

\begin{proof} (1) Let  $h, h' \in  C(\Xx, \Mm (\Yy)_{\tilde K_2})_{M} $   be bounded  by a number $\alpha$, i.e., 
	$$ \| h \|_M \le \alpha \, \&  \| h'\|_M  \le \alpha $$
	for  some positive number $\alpha < \infty$.
	Let $\mu \in \Ss(\Xx)$. By \eqref{eq:tildek} we have 
\begin{align}
| \| (\Gamma_{\underline {h}})_* \mu - (\Gamma_{\underline {h'}})_* \mu\| _{\tilde K_1}^2| \le | \la (\Gamma_{\underline h} )_* \mu - (\Gamma_{\underline{h'}})_*\mu |  (\Gamma_{\underline h} )_*\mu\ra_{\tilde K_1} | \nonumber \\
+ |\la (\Gamma_{\underline {h'} })_*\mu| (\Gamma_{\underline h})_*\mu -(\Gamma_{\underline{h'}})_* \mu \ra_{\tilde K_1}|\nonumber \\
=| \int _{\Xx \times \Xx}    \la\Gamma_{h}(x) -\Gamma_{h'} (x), \Gamma_{h}(x')\ra_{\tilde K_1} d\mu (x)d\mu (x') |\nonumber\\
+ | \int _{\Xx \times \Xx}  \la  \Gamma_{h'}(x), \Gamma_{h'} (x')-\Gamma_h (x')\ra_{\tilde K_1} d\mu (x)d\mu (x')|\nonumber\\
  \le 2 \alpha\cdot   d_M  (h, h')\cdot  	\mu (\Xx)^2.	 \label{eq:estb}
 \end{align}
Inequality \eqref{eq:estb} implies  that    $\widehat \mu$ is a  continuous  map.  This completes  the  proof of the first assertion of Lemma \ref{lem:continuity}.
		
(2)	The second  assertion  of Lemma \ref{lem:continuity} follows  from Lemma \ref{lem:almostsurely}(1). This completes  the  proof  of  Lemma \ref{lem:continuity}.	
\end{proof}

\begin{lemma}\label{lem:uniq}  Assume that  $\mu \in \Pp_{Lip} (\Xx, \Pp (\Yy)_{\tilde K_2}, vol_\Xx)$. There  exists a unique    regular conditional probability measure  $\mu_{\Yy|\Xx} \in 
	C_{Lip} (\Xx, \Pp (\Yy)_{\tilde K_2})$ for  $\mu$ with respect  to the projection $\Pi_\Xx: \Xx\times \Yy\to \Xx$. 
\end{lemma}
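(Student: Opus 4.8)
The plan is to handle existence and uniqueness separately. Existence is already contained in the hypothesis: by condition (ii) in the definition of $\Pp_{Lip}(\Xx, \Pp(\Yy)_{\tilde K_2}, vol_\Xx)$, the measure $\mu$ admits a regular conditional probability measure $\mu_{\Yy|\Xx}$ which, viewed as a map $\Xx \to \Pp(\Yy)$ as in Examples \ref{ex:probm}(i), lies in $C_{Lip}(\Xx, \Pp(\Yy)_{\tilde K_2})$, so there is nothing further to prove there. The content of the lemma is therefore uniqueness: if $h, h' \in C_{Lip}(\Xx, \Pp(\Yy)_{\tilde K_2})$ are both regular conditional probability measures for $\mu$ relative to $\Pi_\Xx$, then $h = h'$ as maps $\Xx \to \Pp(\Yy)$ (and not merely $\mu_\Xx$-a.e.).

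First I would record that $\|\cdot\|_{\tilde K_2}$ is a genuine norm on $\Ss(\Yy)$, hence separates the points of $\Pp(\Yy)$. Indeed, by the definition of the pullback pairing together with Lemma \ref{lem:ssep}, $\|\nu\|_{\tilde K_2}^2 = \langle \mathfrak M_K(i_*\nu), \mathfrak M_K(i_*\nu)\rangle_{\Hh(K)}$, where $i: \Yy \hookrightarrow \R^{m+n}$ is the canonical embedding; since $i_*$ is injective and $\mathfrak M_K$ is injective (Proposition \ref{prop:srithm32}, applicable because $K$ satisfies its hypotheses), $\|\nu\|_{\tilde K_2} = 0$ forces $\nu = 0$. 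Next I would apply Theorem \ref{thm:marginal}(2) (equivalently Lemma \ref{lem:almostsurely}(1)) to conclude that the set $N := \{x \in \Xx : h(x) \neq h'(x)\}$ satisfies $\mu_\Xx(N) = 0$. Since $h$ and $h'$ are Lipschitz, hence continuous, as maps into $(\Pp(\Yy), \|\cdot\|_{\tilde K_2})$, the function $x \mapsto \|h(x) - h'(x)\|_{\tilde K_2}$ is continuous on $\Xx$, and by the previous point $N$ is exactly the set where this continuous function is strictly positive, hence $N$ is open.

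Finally I would use condition (i), ${\rm sppt}\,\mu_\Xx = \Xx$: a nonempty open subset of $\Xx$ cannot be $\mu_\Xx$-null, for it would then be an open set disjoint from the support of $\mu_\Xx$, contradicting the definition of the support. Applied to the open $\mu_\Xx$-null set $N$, this forces $N = \emptyset$, i.e. $h = h'$ everywhere, which is the asserted uniqueness. I do not anticipate a serious obstacle; the only step requiring care is the verification that $\|\cdot\|_{\tilde K_2}$ separates points (so that $N$ is genuinely the positivity set of a continuous function), which is precisely the injectivity of $\mathfrak M_K$ from Proposition \ref{prop:srithm32} transported to $\Yy$ via Lemma \ref{lem:ssep}, and the remainder is the standard ``$\mu_\Xx$-a.e.\ equal $+$ continuous $+$ full support $\Rightarrow$ equal'' argument.
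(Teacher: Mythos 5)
Your proof is correct and follows essentially the same route as the paper, which derives existence from condition (ii) and uniqueness from Theorem \ref{thm:marginal}(2) combined with condition (i). You merely spell out the details the paper leaves implicit, namely that injectivity of $\mathfrak M_{K}\circ i_*$ makes $\|\cdot\|_{\tilde K_2}$ a genuine norm so that the disagreement set is open, whence the ``$\mu_\Xx$-a.e.\ equal $+$ continuous $+$ full support $\Rightarrow$ equal'' argument applies.
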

\begin{proof} The existence  of  a regular conditional probability measure  $\mu_{\Yy|\Xx} \in 
	C(\Xx, \Pp (\Yy)_{\tilde K_2})$ for  $\mu$  follows  from the condition (ii) of Theorem \ref{thm:main}. The uniqueness  follows  from  Theorem \ref{thm:marginal}(2), taking into account the condition (i)
	of  Theorem \ref{thm:main}.
\end{proof}

The following Lemma   is a variant  of Proposition \ref{prop:tcontinuous}.

\begin{lemma}\label{lem:continuous3}  Assume the condition of Theorem \ref{thm:main}.  Let  $\overline  T\in C_{Lip}(\Xx, \Pp (\Yy)_{\tilde K_2})$. Let $K_3$  denote the restriction of $K$ to $\Xx$. Then  the map
$$(\Gamma_T)_*: \Pp (\Xx)_{\tilde K_3} \to \Ss (\Xx\times \Yy)_{\tilde K_1}$$
is continuous.
\end{lemma}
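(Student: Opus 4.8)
The plan is to prove this as the ``metrized'' counterpart of Proposition \ref{prop:tcontinuous}. Throughout I identify $\Xx\times\Yy$ with the compact subset $\{x+y : x\in\Xx,\ y\in\Yy\}$ of $\R^{n+m}$ (the addition map is a homeomorphism, since $\Xx$ and $\Yy$ lie in complementary coordinate subspaces), so that $K_1$, $K_2$, $K_3$ are genuine restrictions of the single kernel $K$ and the embeddings $\mathfrak M_{K_1},\mathfrak M_{K_2},\mathfrak M_{K_3}$ all take values in one RKHS $\Hh(K)$.

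First I would record that the restricted kernels remain characteristic. Applying Lemma \ref{lem:ssep}(1) with ambient space $\R^{n+m}$ to the compact Polish subspaces $\Xx$, $\Yy$ and $\Xx\times\Yy$ shows that $K_1$, $K_2$ and $K_3$ each again satisfy the hypotheses of Proposition \ref{prop:srithm32}. In particular $\mathfrak M_{K_1}$ and $\mathfrak M_{K_3}$ are injective, so $\|\cdot\|_{\tilde K_1}$ and $\|\cdot\|_{\tilde K_3}$ are honest norms, and by Proposition \ref{prop:srithm32} the metric induced by $\|\cdot\|_{\tilde K_2}$ on $\Pp(\Yy)$ (resp.\ by $\|\cdot\|_{\tilde K_3}$ on $\Pp(\Xx)$, by $\|\cdot\|_{\tilde K_1}$ on $\Pp(\Xx\times\Yy)$) coincides with the weak$^*$ topology $\tau_w$ there.

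Next I would invoke Proposition \ref{prop:tcontinuous}. Since $\overline T\in C_{Lip}(\Xx,\Pp(\Yy)_{\tilde K_2})$ is $\|\cdot\|_{\tilde K_2}$-continuous, the preceding step makes $\overline T$ a continuous map $\Xx\to(\Pp(\Yy),\tau_w)$; as $\Yy$ is compact metrizable, Proposition \ref{prop:tcontinuous} yields that $\overline{\Gamma_T}=\delta\cdot\overline T$ is a Markov kernel and that $(\Gamma_T)_*:(\Ss(\Xx),\tau_w)\to(\Ss(\Xx\times\Yy),\tau_w)$ is continuous. It remains to pass to the kernel-mean-embedding metrics. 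By Lemma \ref{lem:diag}(2) the map ${\rm diag}$ is $\tau_w$-continuous, and since $K_1$ and $K_3$ are bounded and continuous, the evaluations $\nu\mapsto\int K_i\,d\nu$ are $\tau_w$-continuous (this is the mechanism behind Lemma \ref{lem:bounded}(3)); hence $\mu\mapsto\|\mu\|_{\tilde K_i}^2$ is $\tau_w$-continuous, and, $\tau_w$ being a linear topology, $\tau_w$-convergence entails $\|\cdot\|_{\tilde K_i}$-convergence. Chaining these: a sequence that converges in $\Ss(\Xx)_{\tilde K_3}$ and lies — as in every application of the lemma — in $\Pp(\Xx)\cup\Pp_{emp}(\Xx)$ converges in $\tau_w$ (Step 1), its image under $(\Gamma_T)_*$ converges in $\tau_w$ (Proposition \ref{prop:tcontinuous}), and since that image lies in $\Pp(\Xx\times\Yy)$ it converges in $\|\cdot\|_{\tilde K_1}$ (Step 1 again); linearity together with the Jordan--Hahn decomposition then extends the statement.

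I expect the main obstacle to be exactly the bookkeeping of the last step. On all of $\Ss(\Xx)$ the $\|\cdot\|_{\tilde K_3}$-metric topology is strictly coarser than $\tau_w$ (it is metrizable while $\tau_w$ is not), so one cannot simply compose the identity $\Ss(\Xx)_{\tilde K_3}\to(\Ss(\Xx),\tau_w)$ with Proposition \ref{prop:tcontinuous}; the coincidence of the two topologies is only at hand on $\tau_w$-compact — hence $TV$-bounded — sets, which is precisely the regime where the lemma is used: the probability and empirical measures appearing in Theorem \ref{thm:main} and the sublevel compacta $W^{-1}([0,c])$ furnished by the regularizer in Theorem \ref{thm:vapnik73} (this is also what the lemma is meant to ``guide'' in the choice of $W$, cf.\ Proposition \ref{prop:W}). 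The real task is to state and use the continuity against the correct domain rather than to carry out any analytic estimate.
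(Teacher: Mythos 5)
Your proposal follows essentially the same route as the paper: its proof likewise consists of invoking Proposition \ref{prop:tcontinuous} for $(\tau_w,\tau_w)$-continuity of $(\Gamma_T)_*$, asserting that $\tau_w$ on $\Mm(\Xx)$ and $\Mm(\Xx\times\Yy)$ is generated by the metrics $\tilde K_3$ and $\tilde K_1$, and then passing to $\Ss(\Xx)$ by linearity (``since $\Mm(\Xx)\setminus\{0\}$ is open in $\Ss(\Xx)$''). The domain issue you flag at the end --- that the $\tilde K_3$-metric topology is in general strictly coarser than $\tau_w$ on all of $\Ss(\Xx)$, so that the coincidence of topologies (and hence the composition with Proposition \ref{prop:tcontinuous}) is only available on $TV$-bounded sets such as $\Pp(\Xx)$ --- is present but left unaddressed in the paper's own one-line final step, so your account is, if anything, the more careful one.
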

\begin{proof} By   Proposition  \ref{prop:tcontinuous},  the map
$$(\Gamma_T)_*: (\Pp (\Xx), \tau_w) \to  (\Pp (\Xx \times \Yy), \tau_w)$$ is continuous.  Since  the  weak* topology $\tau_w$ on $\Pp (\Xx)$ and on  $\Pp (\Xx \times \Yy)$ are   generated by the metric $\tilde K_3$ and  $\tilde K_1$, respectively,  it follows  that
$$(\Gamma_T)_*: \Pp (\Xx)_{\tilde K_3} \to  (\Pp (\Xx \times \Yy)_{\tilde K_2}$$
is continuous. Taking into account   the continuity of  the inclusion $ (\Pp (\Xx \times \Yy)_{\tilde K_2} \to \Ss (\Xx\times \Yy)_{\tilde K_1}$,  we conclude  Lemma \ref{lem:continuous3}.
\end{proof}

\begin{proposition}\label{prop:empcl}  Let $\mu \in \Pp_{emp} (\Xx \times \Yy)$.
	Then there exists     a map $f \in C_{Lip}(\Xx, \Pp(\Yy)_{\tilde K_2})$  such that
	$$(\Gamma_{\underline  f})_*\mu_\Xx  = \mu.$$
\end{proposition}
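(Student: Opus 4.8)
The statement to prove is Proposition~\ref{prop:empcl}: for an empirical measure $\mu = \frac{1}{n}\sum_{i=1}^{n}\delta_{(x_i,y_i)} \in \Pp_{emp}(\Xx\times\Yy)$, one must exhibit a Lipschitz map $f\in C_{Lip}(\Xx,\Pp(\Yy)_{\tilde K_2})$ with $(\Gamma_{\underline f})_*\mu_\Xx = \mu$. The plan is to construct $f$ explicitly by interpolating the conditional distributions dictated by the data. First I would write $\mu_\Xx = \frac{1}{n}\sum_{i=1}^{n}\delta_{x_i}$ and group the sample points by their first coordinate: for each distinct value $\xi$ occurring among the $x_i$, let $J_\xi = \{ i : x_i = \xi\}$ and set $\nu_\xi := \frac{1}{|J_\xi|}\sum_{i\in J_\xi}\delta_{y_i}\in\Pp(\Yy)$. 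By Theorem~\ref{thm:marginal}(1), any measurable $f:\Xx\to\Pp(\Yy)$ with $f(\xi)=\nu_\xi$ at each such $\xi$ already satisfies $(\Gamma_{\underline f})_*\mu_\Xx = \mu$, because both sides are supported on $\{x_i\}\times\Yy$ and agree there. So the only real content is to arrange that $f$ can be taken Lipschitz as a map into $(\Pp(\Yy),\|\cdot\|_{\tilde K_2})$.

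Here the key observation is that $\Pp(\Yy)$, viewed inside $\Ss(\Yy)_{\tilde K_2}$ (equivalently, via $\mathfrak M_{\tilde K_2}$ inside the RKHS $\Hh(\tilde K_2)$), is a convex subset of a Hilbert space, hence any finite set of points in it can be joined by a Lipschitz — indeed piecewise-affine — path. Concretely, the distinct values $\xi_1,\dots,\xi_r$ are finitely many points of the compact metric space $\Xx\subset\R^n$; I would choose disjoint closed balls $B_j$ around $\xi_j$ (possible since the points are distinct, so their pairwise distances are positive), and define $f$ to equal $\nu_{\xi_j}$ on a small ball, then interpolate affinely (in the Hilbert space $\Hh(\tilde K_2)$, using convexity of $\mathfrak M_{\tilde K_2}(\Pp(\Yy))$ — which holds because $\mathfrak M_{\tilde K_2}$ is affine and $\Pp(\Yy)$ is convex) to a fixed reference measure, say $\nu_{\xi_1}$, on the complement. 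Since $\Xx$ is compact and $\mathfrak M_{\tilde K_2}$ is an isometry onto its image up to the norm identity $\|\mu-\nu\|_{\tilde K_2} = \|\mathfrak M_{\tilde K_2}(\mu)-\mathfrak M_{\tilde K_2}(\nu)\|_{\Hh(\tilde K_2)}$ (cf. Lemma~\ref{lem:ssep} and the definition of $\|\cdot\|_{\tilde K}$), this construction yields a genuinely Lipschitz map $\Xx\to\Pp(\Yy)_{\tilde K_2}$; continuity and the Lipschitz bound follow because an affine path in a Hilbert space, reparametrized by a Lipschitz function of $x$ (e.g. a normalized distance-to-$B_j$), is Lipschitz, and there are finitely many pieces on a compact domain.

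Once $f$ is built, I would verify $(\Gamma_{\underline f})_*\mu_\Xx = \mu$ directly from \eqref{eq:jointmeasure}: for $A\in\Sigma_\Xx$, $B\in\Sigma_\Yy$,
\begin{equation*}
(\Gamma_{\underline f})_*\mu_\Xx(A\times B) = \int_A \overline{f}(x)(B)\,d\mu_\Xx(x) = \frac{1}{n}\sum_{i:\,x_i\in A} f(x_i)(B) = \frac{1}{n}\sum_{i:\,x_i\in A}\nu_{x_i}(B),
\end{equation*}
and unwinding the definition of $\nu_{x_i}$ shows this equals $\frac{1}{n}\#\{i: x_i\in A,\ y_i\in B\} = \mu(A\times B)$, since within each group $J_\xi$ the measure $\nu_\xi$ restores exactly the empirical distribution of the $y_i$'s for that $\xi$. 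This closes the proof.

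\textbf{Expected main obstacle.} The routine-sounding but genuinely delicate point is the Lipschitz interpolation: one needs that the values $\nu_\xi\in\Pp(\Yy)$, together with the reference measure, lie in a common convex subset of a normed space in which affine segments are Lipschitz, and that the interpolation can be glued across finitely many disjoint balls without destroying the Lipschitz constant. Convexity of $\Pp(\Yy)$ together with the fact that $\|\cdot\|_{\tilde K_2}$ comes from the Hilbert-space norm via $\mathfrak M_{\tilde K_2}$ makes this work cleanly; the compactness of $\Xx$ and the positivity of the separation between the finitely many data-values of $x$ are exactly what is needed to get uniform control. No deeper machinery than Theorem~\ref{thm:marginal}(1), Lemma~\ref{lem:ssep}, and the elementary geometry of RKHS norms is required.
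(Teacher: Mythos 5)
Your proof is correct, but your interpolation step is genuinely different from the paper's, and arguably cleaner. The paper also reduces to prescribing the values $f(x_i)=\nu_{x_i}$ (its Lemma~\ref{lem:regem} is your direct computation via \eqref{eq:jointmeasure}), but it then builds the Lipschitz extension by projecting $\R^n$ onto a generic line that separates the finitely many $x_i$ and running a Newton-form polynomial interpolation into the linear span $V={\rm span}(Y_1,\dots,Y_N)\subset\Ss(\Yy)$. That construction is Lipschitz on the compact $\Xx$, but a polynomial curve through probability measures has no reason to stay inside $\Pp(\Yy)$ between the nodes, so as written it only produces an element of $C_{Lip}(\Xx,\Ss(\Yy)_{\tilde K_2})$, whereas the proposition asserts $f\in C_{Lip}(\Xx,\Pp(\Yy)_{\tilde K_2})$. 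Your bump-function construction — constant equal to $\nu_{\xi_j}$ near each $\xi_j$, convex interpolation to a reference measure on the annuli, reference measure elsewhere — takes values that are convex combinations of the $\nu_{\xi_j}$, hence manifestly in $\Pp(\Yy)$; the Lipschitz bound follows since $\mathfrak M_{\tilde K_2}$ is linear and $\|\cdot\|_{\tilde K_2}$ is pulled back from the RKHS norm, so affine segments reparametrized by a Lipschitz cutoff are Lipschitz, and there are finitely many pieces. In short, your route buys exactly the containment in $\Pp(\Yy)$ that the statement requires, at the cost of nothing; the paper's route buys an explicit polynomial formula but leaves the positivity/normalization of the interpolated measures unaddressed. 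The only point worth making explicit in your write-up is that continuity of $f$ into $(\Pp(\Yy),\|\cdot\|_{\tilde K_2})$ gives $\tau_w$-continuity and hence $\Sigma_w$-measurability (via Proposition~\ref{prop:srithm32} and $\Bb(\tau_w)=\Sigma_w$), so that $\underline f$ is indeed a probabilistic morphism.
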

\begin{proof} Let $\mu: = \sum_{i=1}^N \sum_{j = 1}^M a_{ij} \delta_{x_i}\delta_{y_j} \in  \Pp_{emp} (\Xx \times \Yy)$ where $a_{ij} \in \Q^+$,   $ x_i \in \Xx$  and   $y_j \in \Yy$.  Then
	\begin{equation}\label{eq:proj}
	\mu_\Xx = \sum_{i =1}^N \sum_{j =1}^M a_{ij} \delta_{x_i}.
	\end{equation}
	\begin{lemma}\label{lem:regem}   If  $ f\in  C_{Lip}(\Xx, \Ss(\Yy)_{\tilde K_2})$ satisfies
		\begin{equation}\label{eq:regem}
		f (x_i) = \frac{ \sum_{j=1} ^M a_{ij}  \delta_{y_j}} { \sum _{j =1}^M  a_{ij}} \text { for  any }   i \in [1, N],
		\end{equation}
		then 
		$$(\Gamma_{\underline  f})_*\mu_\Xx  = \mu.$$
	\end{lemma}
	\begin{proof}  Lemma  \ref{lem:regem}  follows  directly  from the  equation
		\eqref{eq:markov1S}.
	\end{proof}
	{\it Continuation of the  proof  of  Proposition \ref{prop:empcl}.}  For $i \in [1, N]$ we set
$$Y_i = \frac{ \sum_{j=1}^M a_{ij} \delta_{y_j}}{\sum_{j=1}^M a_{ij}}.$$
Let  $V= {\rm span} (Y_1, \ldots, Y_N)$ be the  linear span  of $Y_1, \ldots, Y_N$.    First,  we  shall find a  Lipschitz continuous  map $f$  from  $\R^n$  to  $V$  such  that  $f $ satisfies  the    Equation \eqref{eq:regem}, equivalently:
$$ f(x_i)  = Y_i.$$
Then  the  restriction of  $f$ to $\Xx$ is  the required  Lipschitz  continuous map  in Proposition \ref{prop:empcl}.

Let  $\R$ be a  straight line in   $\R^n$ such that  the projection $\Pi_1: \R^n \to \R$    maps  $\{ x_1 \ldots, x_N\in \Xx\}$    1-1  onto its image in  $\R$. Since  the  restriction $\Pi_1|_{ 
\Xx}: \Xx \to \R$ of $\Pi_1$ to $\Xx$ is a  Lipschitz continuous  map, to construct  the  required  Lipschitz continuous  map $f:\R^n \to V$, it  suffices   assume  that $n =1$.   Now  we shall  construct  a polynomial mapping  $f$ from $\R$  to  $V$,    which satisfies  the  interpolation  equation \eqref{eq:regem}.

Given   a tuple of $N$  points $( Y_1, \ldots, Y_N)$ in the vector space  $V$ of dimension $d \le  N-1$  and   a   tuple  of  $N$ points  $ x_0 = 0 < x_1 < \ldots   < x_{N-1}\in \R$,  we  shall construct a  polynomial 
mapping $f: \R \to V$   such   that  
\begin{equation}\label{eq:inter}
f (x_i) =  Y_i.
\end{equation}
The coordinates of $Y_i$ are denoted by $ y _i ^j$.
Our map  $f$  consists of   polynomial
functions $f^1, \ldots,  f^d: \R \to \R$   such that
\begin{equation}\label{eq:interpolation}
f^j (x_i) =  y_i  ^j  \text{ for } j \in [1, d].
\end{equation}
We set
\begin{equation*}
f^i (x) := \lambda ^0 _i  + \lambda ^1 _i x + \lambda ^2 _i x (x  - x_1) + \cdots + \lambda ^n _i x ( x-x_1) \cdots (x-x_n),
\label{4.1.1}
\end{equation*}
where   the coefficients $\lambda^k _i$ are defined inductively: $\lambda^0 _i = y ^0 _i$, $\lambda ^1 _i = y ^1 _i - \lambda ^0 _i$, etc...
Clearly  the  constructed  polynomial  mapping $ f = (f^1, \ldots, f^d)$ 
satisfies  \eqref{eq:interpolation} and hence \eqref{eq:inter}.
This completes the  proof of  Proposition \ref{prop:empcl}.
\end{proof}

For any subset  $S$ of the  metric  space $C(\Xx, \Mm (\Yy)_{\tilde K_2})_M$ we denote by $S_M$  the  metric  space $S$ endowed  with  the induced metric  $d_M$.

\begin{proof}[Proof of Theorem \ref{thm:main}]  Let  $E_1 = C_{Lip} (\Xx, \Pp (\Yy)_{\tilde K_2})_{M}$,  $\Aa = \Pp (\Xx)$, $E_2  = \Ss (\Xx\times \Yy)_{\tilde K_1}$  and $\mu  \in \Pp (\Xx, \Pp(\Yy_{\tilde K_2}), vol_\Xx) \subset E_2$.   By Lemma \ref{lem:continuity}, for any  $\mu \in \Aa$, the  operator
	$\hat \mu: C_{Lip} (\Xx, \Pp (\Yy)_{\tilde K_2})_{M} \to E_2$  is a continuous  operator. We shall  apply Theorem  \ref{thm:vapnik73} to   prove the existence  of approximate  solutions   $f_{S_l} \in E_1$ for  solving the following equation of  a regular  conditional probability measure  $f$  for $\mu$ relative to the projection  $\Pi_\Xx: \Xx \times \Yy \to \Xx$: 
	\begin{equation}\label{eq:le1}  
	(\Gamma_{\underline f})_* ((\Pi_\Xx)_* (\mu)) = \mu.
	\end{equation}
By  Theorem \ref{thm:errorRKHS}, $f$ is the  minimizer   of the  loss function $R^{K_1}$. 

	Let  us  define  a function $W:  C_{Lip} (\Xx, \Pp (\Yy)_{\tilde K_2})_{M}  \to \R_{\ge 0}$ as  follows
	\begin{equation}\label{eq:w}
	W(f): = (\| f\|_{M}  +  L(f) + \| \Gamma_{\underline f}\|_{(\tilde K_3, \tilde K_1)} ) ^2
	\end{equation}

where   $L(f)$ is the  Lipschitz  constant  of $f$ and
\begin{equation}\label{eq:normvan}
\|\Gamma _{\underline f}\|_{\tilde K_3, \tilde K_1}: = \sup_{A, B \in \Pp (\Xx)}\frac{\|(\Gamma_{\underline f})_* (A-B)\|_{\tilde K_1}}{\|A-B\|_{\tilde K_3}}.
\end{equation}
By  Lemma \ref{lem:continuous3}, $\|\Gamma _{\underline f}\|_{\tilde K_3, \tilde K_1} < \infty$  for $f \in  C_{Lip} (\Xx, \Pp (\Yy)_{\tilde K_2})_{M}$.

\begin{proposition}\label{prop:W}   (1) $W: C_{Lip}(\Xx, \Pp (\Yy)_{\tilde K_2})_{M} \to \R_{\ge 0}$ is   a lower semi-continuous function. 
	
(2) Furthermore, for any $c \ge 0$, the  set $W^{-1} [0, c]$   is a compact  set in   $C_{Lip}(\Xx, \Pp (\Yy)_{\tilde K_2})_M$.
\end{proposition}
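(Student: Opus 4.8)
The plan is to handle the two assertions separately. For \textbf{(1)}, I would write $W=\phi^2$ with $\phi(f):=\|f\|_M+L(f)+\|\Gamma_{\underline f}\|_{\tilde K_3,\tilde K_1}\ge 0$, and reduce to proving that $\phi$ is lower semi-continuous on $C_{Lip}(\Xx,\Pp(\Yy)_{\tilde K_2})_M$ (since $t\mapsto t^2$ is continuous and nondecreasing on $\R_{\ge0}$, and a finite sum of lower semi-continuous functions is lower semi-continuous). The summand $f\mapsto\|f\|_M$ is $1$-Lipschitz for $d_M$, hence continuous. For $L$, I would note that $d_M$-convergence forces $\|f_n(x)-f(x)\|_{\tilde K_2}\le\|f_n-f\|_M\to0$ for every $x$, so for each fixed pair $x\ne x'$ the map $f\mapsto\|f(x)-f(x')\|_{\tilde K_2}/d_\Xx(x,x')$ is continuous and $L$ is their supremum, hence lower semi-continuous. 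For the third summand, by Lemma \ref{lem:ssep}(1) the restriction $\tilde K_3$ of $K$ to $\Xx$ still satisfies the hypotheses of Proposition \ref{prop:srithm32}, so $\mathfrak M_{\tilde K_3}$ is injective and $\|A-B\|_{\tilde K_3}>0$ for $A\ne B$; for such fixed $A,B\in\Pp(\Xx)$ the map $f\mapsto\|(\Gamma_{\underline f})_*(A-B)\|_{\tilde K_1}/\|A-B\|_{\tilde K_3}$ is continuous by Lemma \ref{lem:continuity} (with $\mu=A-B\in\Ss(\Xx)$) composed with the norm $\|\cdot\|_{\tilde K_1}$, so $\|\Gamma_{\underline f}\|_{\tilde K_3,\tilde K_1}$ is again a supremum of continuous functions.

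For \textbf{(2)}, fix $c\ge0$, put $\Mm_c:=W^{-1}([0,c])$, and prove sequential compactness (enough, since $d_M$ is a metric). Given $(f_n)\subset\Mm_c$, each $f_n$ is $\sqrt c$-Lipschitz ($L(f_n)\le\sqrt{W(f_n)}\le\sqrt c$) and valued in $\Pp(\Yy)_{\tilde K_2}$, a compact metric space since $\Yy$ is compact metrizable and the $\tilde K_2$-metric induces $\tau_w$ on $\Pp(\Yy)$ (Proposition \ref{prop:srithm32} via Lemma \ref{lem:ssep}(1)); so by Arzel\`a--Ascoli a subsequence satisfies $f_n\to f$ uniformly in the $\tilde K_2$-metric, with $f$ again $\sqrt c$-Lipschitz and $\Pp(\Yy)$-valued. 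I would then run a second Arzel\`a--Ascoli argument on the graph maps $\Gamma_{f_n}\colon\Xx\to\Pp(\Xx\times\Yy)_{\tilde K_1}$, $x\mapsto\delta_x\cdot f_n(x)$, which are continuous by Lemma \ref{lem:diag}(1): from $(\Gamma_{\underline{f_n}})_*\delta_x=\Gamma_{f_n}(x)$ and $\|\Gamma_{\underline{f_n}}\|_{\tilde K_3,\tilde K_1}\le\sqrt c$ one gets $\|\Gamma_{f_n}(x)-\Gamma_{f_n}(x')\|_{\tilde K_1}\le\sqrt c\,\|\delta_x-\delta_{x'}\|_{\tilde K_3}$, and since $\|\delta_x-\delta_{x'}\|_{\tilde K_3}^2=\tilde K_3(x,x)-2\tilde K_3(x,x')+\tilde K_3(x',x')$ is continuous on the compact $\Xx\times\Xx$ and vanishes on the diagonal, the family $\{\Gamma_{f_n}\}$ is uniformly equicontinuous; its values lie in the compact metric space $\Pp(\Xx\times\Yy)_{\tilde K_1}$ (Lemma \ref{lem:ssep}(1) again), so a further subsequence gives $\Gamma_{f_n}\to G$ uniformly in the $\tilde K_1$-metric. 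Since $f_n(x)\to f(x)$ weakly for each $x$, the $(\tau_w,\tau_w)$-continuity of $\mathfrak m$ (Lemma \ref{lem:diag}(1)) yields $\Gamma_{f_n}(x)=\delta_x\cdot f_n(x)\to\delta_x\cdot f(x)=\Gamma_f(x)$, whence $G=\Gamma_f$; therefore $d_M(f_n,f)\le\sup_x\|f_n(x)-f(x)\|_{\tilde K_2}+\sup_x\|\Gamma_{f_n}(x)-\Gamma_f(x)\|_{\tilde K_1}\to0$. Finally $f\in C_{Lip}(\Xx,\Pp(\Yy)_{\tilde K_2})$ and $W(f)\le\liminf_n W(f_n)\le c$ by (1), so $f\in\Mm_c$.

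The delicate point is that $d_M$ is strictly finer than the sup-metric built from $\tilde K_2$ alone, so a single application of Arzel\`a--Ascoli to $(f_n)$ only controls the values $f_n(x)$ and not the measures $\Gamma_{f_n}(x)=\delta_x\cdot f_n(x)$ on $\Xx\times\Yy$. The resolution is the identity $(\Gamma_{\underline f})_*\delta_x=\Gamma_f(x)$, which converts the bound $\|\Gamma_{\underline f}\|_{\tilde K_3,\tilde K_1}\le\sqrt c$ coming from $W\le c$ into equicontinuity of the graph maps, so the second Arzel\`a--Ascoli step goes through; identifying the limit with $\Gamma_f$ then needs only weak continuity of measure multiplication. (The term $\|f\|_M$ plays no role in the compactness argument beyond being continuous, hence does not spoil lower semi-continuity.)
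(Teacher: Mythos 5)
Your proof is correct, and while the overall strategy (lower semi-continuity of each summand, then Prokhorov plus Arzel\`a--Ascoli for compactness) matches the paper's, your execution differs in two ways worth noting. For part (1), the paper proves lower semi-continuity of $f\mapsto\|\Gamma_{\underline f}\|_{\tilde K_3,\tilde K_1}$ by an explicit $\eps$-argument: it picks near-maximizers $A_\eps,B_\eps$ and estimates $\big|\,\|(\Gamma_{\underline{f'}})_*(C_\eps)\|_{\tilde K_1}^2-\|(\Gamma_{\underline{f}})_*(C_\eps)\|_{\tilde K_1}^2\big|$ in terms of $\|f-f'\|_\infty$; you package the same idea more cleanly as ``a supremum of $d_M$-continuous functions is lower semi-continuous,'' invoking Lemma \ref{lem:continuity} for the continuity of each ratio, and you additionally supply the (omitted in the paper) verifications that $\|\cdot\|_M$ is $1$-Lipschitz and that $L$ is lower semi-continuous as a supremum over pairs $x\neq x'$. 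For part (2), the paper's proof consists of citing Sturm for compactness of $\Pp(\Yy)$ in $\tau_w$ and then a single appeal to Arzel\`a--Ascoli; this does not explain why the limit is attained in the full metric $d_M$, whose second component measures $\sup_x\|\Gamma_{f_n}(x)-\Gamma_f(x)\|_{\tilde K_1}$. Your two-step argument --- extracting first a uniform $\tilde K_2$-limit of the $f_n$, then using the bound $\|\Gamma_{\underline{f_n}}\|_{\tilde K_3,\tilde K_1}\le\sqrt c$ and the identity $(\Gamma_{\underline{f_n}})_*\delta_x=\Gamma_{f_n}(x)$ to get equicontinuity of the graph maps, and finally identifying the second limit with $\Gamma_f$ via the $(\tau_w,\tau_w)$-continuity of $\mathfrak m$ --- fills exactly this gap, so your write-up is in fact more complete than the paper's on this point.
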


\begin{proof}
(1) To prove the  first assertion of   Proposition \ref{prop:W} we need  the following.
	
	\begin{lemma}\label{lem:lipschitzlconti}  The function 
		$$ C_{Lip}(\Xx, \Pp (\Yy)_{\tilde K_2})_M \to \R_{\ge 0}, f \mapsto L(f)$$
		is  a lower  semi-continuous function.
	\end{lemma}
	\begin{proof}[Proof of Lemma \ref{lem:lipschitzlconti}] We need  to prove  that  for any $f_0 \in C_{Lip}(\Xx, \Pp(\Yy)_{\tilde K_2})_M$  we have
		\begin{equation}\label{eq:semilow}
\lim _{f\stackrel{d_M}{\to} f_0}\inf  L(f) \ge  L(f_0)
		\end{equation}
where $f\stackrel{d_M}{\to} f_0$  means  that  the convergence is   in the topology generated  by $d_M$.     Since
$$ \lim _{f\stackrel{d_M}{\to} f_0}\inf  L(f) \ge \lim _{f\stackrel {\| \cdot \|_\infty}{\to}f_0}\inf  L(f),$$
the  inequality \eqref{eq:semilow}  follows  from  the lowe semi-continuity  of the Lifschitz  constant with respect  to the sup-norm, which is well-known. This  completes  the proof  of Lemma \ref{lem:lipschitzlconti}.
	\end{proof}

Now we shall prove  that $W$ is a lower semi-continuous function.   
Taking into account Lemma \ref{lem:lipschitzlconti}, it suffices  to show that  the function $C_{Lip} (\Xx, \Pp (\Yy)_{\tilde K_2})_{M}\to \R_{>0},  f \mapsto \| \Gamma _{\underline f}\|_{\tilde K_3, \tilde K_2},$  is lower semi-continuous.  For $f  \in C_L (\Xx, \Pp (\Yy)_{\tilde K_2})_{M}$,  let $A_\eps,  B_\eps \in \Pp (\Xx)$ be chosen such that
 \begin{equation}\label{eq:eps1}
 \| \Gamma _{\underline f}\|_{\tilde K_3, \tilde K_1} \le \frac{\|(\Gamma_{\underline f})_*( A_\eps - B_\eps)\|_{\tilde K_1}}{\| A_\eps - B_\eps\|_{\tilde K_3}} + \eps.
 \end{equation}
Write $C_\eps = A_\eps - B_\eps\in \Ss(\Xx)$. Let  $f' \in C_L (\Xx, \Pp (\Yy)_{\tilde K_2})_{M}$ such that
\begin{equation}\label{eq:b2}
\|f\|_\infty \le   \alpha   \text{ and } \| f'\| _\infty < \alpha 
\end{equation}
for some positive  $\alpha < \infty$.
\begin{lemma}\label{lem:est3} Under  the   assumption \eqref{eq:b2}  we  have
	\begin{equation}\label{eq:est3}
\| (\Gamma_{\underline f})_* C_\eps \| _{\tilde K_1}\le 2 C_K\alpha  \text{ and } \| (\Gamma_{\underline {f'}})_* C_\eps \| _{\tilde K_1}\le 2 C_K\alpha. 
\end{equation} 
\end{lemma}
\begin{proof} Lemma  \ref{lem:est3}  follows immediately from \eqref{eq:estb} taking into account  that $\|C_\eps\|_{TV} \le 2$.
\end{proof}
{\it Completion of the  proof of Proposition \ref{prop:W}}(1). Let
$$C_K  = \max  _{z \in \Xx \times \Yy} K (z, z).$$
 Then  we have (cf. the proof of Lemma \ref{lem:continuity}  and the  proof of Lemma \ref{lem:CSP3})
 \begin{align}
|\|( \Gamma _{\underline {f'}})_* (C_\eps)\|_{\tilde K_1}^2 - \|(\Gamma_{\underline f})_*(C_\eps) \|_{\tilde K_1}^2 |\le |\la  (\Gamma_{\underline {f'}})_* (C_\eps) - (\Gamma_{\underline {f}})_* (C_\eps)| (\Gamma_{\underline {f'}})_* (C_\eps)\ra_{\tilde  K_1} |\nonumber \\
 + |\la (\Gamma_{\underline {f}})_* (C_\eps ) -(\Gamma_{\underline {f'}})_* (C_\eps )| (\Gamma_{\underline {f}})_* (C_\eps )\ra_{\tilde K_1}|\nonumber \\
 \stackrel{\eqref{eq:est3}}{\le } 4 \alpha\cdot  C_k \|(\Gamma_{\underline{f-f'}})_* (C_\eps) \|_{\tilde K_1}\stackrel{\eqref{eq:estb}}{\le} 4\alpha \cdot C_K  \| f -f'\|_\infty  \cdot 2\label{eq:eps2},
 \end{align}
 since $\|C_\eps\|_{TV} \le 2$.

 Since 
 \begin{equation}\label{eq:lower}
 \|\Gamma_{\underline {f'}}\|_{\tilde K_3, \tilde K_1} \ge  \Big (\frac{ \|(\Gamma_{\underline{f'}})_*  (C_\eps)\|_{\tilde K_1} }{\|C_\eps \|_{\tilde K_3} } \Big), 
 \end{equation}
 taking into account \eqref{eq:eps1}  and \eqref{eq:lower}, we conclude from  \eqref{eq:eps2}  that
 \begin{equation}\label{eq:lowersemi}
 \lim_{f'\to f} \|\Gamma_{\underline {f'}}\|_{\tilde K_3, \tilde K_1}\ge  \| \Gamma _f \|_{\tilde K_3, K_1}.
 \end{equation} 
 
 Equation \ref{eq:lowersemi}  implies  that the function  $f \mapsto \| \Gamma _{\underline f}\|_{\tilde K_3, \tilde K_2}$ is lower semi-continuous, and hence $W$ is a   lower semi-continuous function.
 This completes  the   proof of the first assertion of Proposition \ref{prop:W}.

(2)	Since $\Yy$  is a compact  subset of $\R^m$, the set  $\Pp(\Yy)$ is compact in the  weak*-topology $\tau_w$.  Since $\Xx$  is compact, by   Arzel\`a-Ascoli theorem \cite[Theorem 18, Chapter 7, p. 234]{Kelley75}, for  any $ c\ge  0$  the    set  
$W^{-1}[0, c]$  is  compact  in $E_1$. This completes  the  proof of Proposition \ref{prop:W}.
\end{proof}
 By Lemmas \ref{lem:continuity}, \ref{lem:uniq} and Propositions \ref{prop:empcl}, \ref{prop:W},  all  the  requirements  of
	Theorem \ref{thm:vapnik73}  are satisfied.

Let  $\mu_{\Xx, l}  = (\pi_\Xx)_*\mu_{S_l}\in \Pp (\Xx)$ where  $\mu_{S_l} \in \Pp _{emp} (\Xx \times\Yy)$.  	For  $\mu \in \Pp (\Xx \times \Yy)$,   we have 
	\begin{align}
	\| \mu_{\Xx, l} - \mu_\Xx\|\stackrel{\eqref{eq:v712}}{ = }\sup _{f \in E_1}\frac{\| (\Gamma_{\underline f})_* (\mu_{\Xx,l} - \mu)\|_{\tilde K_1}}{W^{1/2}(f)}\nonumber\\
	\le \sup _{f \in E_1}\frac{\| (\Gamma_{\underline f})_* (\mu_{\Xx, l} - \mu_\Xx)\|_{\tilde K_1}}{\|\Gamma_f\|_{\tilde K_3, \tilde K_2}}\stackrel{\eqref{eq:normvan}}{\le} \|\mu_{\Xx, l} -\mu_\Xx\|_{\tilde K_1}\label{eq:vnorm1}.
	\end{align}
 Applying Theorem \ref{thm:vapnik73} to our   case with  
 $(\Xx_l, \mu_{l}) = ((\Xx \times \Yy)^l, \mu^l)$,  and letting $f_{S_l}$ to be  a $\gamma_l^2$-minimizer  of the    regularized  risk  function
 $$R^{K_1}_{\gamma_l}  (f, \mu_{S_l}) = R^{K_1}  (f, \mu_{S_l}) + \gamma_l W (f),$$
 taking into account \eqref{eq:vnorm1}, we obtain  the following estimation
\begin{align}
(\mu^l)^* \{ S_l : d_{M}(f_{S_l}, \mu_{\Yy|\Xx}) > \eps
\}\le (\mu^l)^* \{ S_l : \frac{\|\mu_l -\mu\|_{\tilde K_1}}{\sqrt{\gamma_l}} \ge C_1\}\nonumber\\
 +(\mu^l)^*\{ S_l : 
 \frac{\|\mu_{\Xx, l} -\mu_\Xx\|_{\tilde K_3}}{\sqrt{\gamma_l}} \ge C_2\}.\label{eq:vnorm}
\end{align}
Taking  into   account  Proposition \ref{prop:lmst2015} and Lemma \ref{lem:continuity},  we conclude  Equation \eqref{eq:van1} in Theorem \ref{thm:main}   from \eqref{eq:vnorm}. The consistency of the  C-ERM  algorithm  follows    from Equations \eqref{eq:van1}  and  \eqref{eq:estb}.
\end{proof}

\begin{corollary}\label{cor:main}  Assume that  $\Pp_{\Xx \times \Yy} \subset\Pp_{Lip} (\Xx, \Pp(\Yy)_{\tilde K_2}, vol_\Xx)$ satisfies   the following  condition (L).
	
(L)  $\Pp_{\Xx \times \Yy}$ is compact in the weak*-topology and the     function  $(\Pp_{\Xx \times \Yy})_{\tilde K_1} \to \R, \mu \mapsto    L (\mu_{\Yy|\Xx})$, where  $\mu_{\Yy|\Xx} \in C_{Lip} (\Xx, \Pp (\Yy)_{\tilde K_2})$,    takes a value  on a  finite interval $[a, b] \subset \R$.

(1) Then  the algorithm $A$  defined  in  the proof  of Theorem \ref{thm:main}  is uniformly consistent  for the  supervised  learning  model $(\Xx, \Yy, C_{Lip} (\Xx, \Pp (\Yy)_{\tilde K_2}), \Pp_{\Xx \times \Yy})$.

(2)  Assume further that the  PDS kernel $K: \R^{n+m} \times \R^{n+m} \to \R$ in Theorem \ref{thm:main}  is continuously ($C^1$-) differentiable and  $\Hh$ is a    subspace   of  the space $C_{Lip}(\Xx, \Yy)$. Then  the   supervised  learning  model $(\Xx, \Yy, \Hh, R^{K_1}, \Pp_{\Xx \times \Yy})$  has a generalization ability. 
\end{corollary}
\begin{proof}
	
(1)  To prove  the first assertion of  Corollary \ref{cor:main}, taking into account  the condition  (L) and Equation \ref{eq:estb}, it suffices to     show that  the  coefficient $\gamma_0  = \gamma (C_1, C_2, W(\mu_{\Yy|\Xx}), A^{-1}(\mu), \eps)$    defined  in  \eqref{eq:gammal1} in  the  proof  of Theorem \ref{thm:vapnik73}, adapted  to the  proof of Theorem \ref{thm:main}, can be   chosen  independently of $\mu\in \Pp_{\Xx\times \Yy}$, where
$A^{-1}(\mu) = \mu_{\Yy|\Xx}$.  By the  condition of Corollary \ref{cor:main},
$W(f)$,  and hence  the  constant $d$   in  Equation \eqref{eq:Wd},  and therefore the compactum $W^{-1}[0,2d]$ can be chosen  independently from $\mu$. 
Let $\Aa^\circ : = \{(\Pi_\Xx)_*\mu: \mu \in \Pp _{\Xx \times \Yy}\}$.  Since $\Pp_{\Xx\times \Yy}$ is compact in the weak*-topology,   $\Aa^\circ$ is compact in the  weak*-topology.  Now   we define  the  operator
$$U:  \Pp(\Xx)_{\tilde K_3}\times W^{-1}[0, 2d] \to \Pp_{Lip}  (\Xx, \Pp(\Yy)_{\tilde K_2}, vol_\Xx),\, (\mu,  h)\mapsto  \Gamma_{\underline h} (\mu).$$
We shall show  that $U$  is a continuous  map.  Let   $\mu, \mu ' \in \Aa^\circ$ and $h , h' \in  W^{-1}[0, 2d]$. Then  we have
\begin{align}
\| (\Gamma_{\underline h' })_*\mu' - \Gamma _{\underline h})_*\mu\|_{\tilde K_1} & \le \|  (\Gamma_{\underline h' })_*\mu' -  (\Gamma_{\underline h })_*\mu'\|_{\tilde K_1}\nonumber\\
 & + \| (\Gamma_{\underline h })_*\mu' - \Gamma_{\underline h}\mu\|_{\tilde K_1}\nonumber\\
  &  \stackrel{\eqref{eq:estb}}{\le }  2 C_K \cdot d_M (h, h') +  (\Gamma_{\underline h })_* (\mu' -\mu).
\label{eq:jconti}
\end{align}
By  Lemma \ref{lem:continuous3}, $(\Gamma_{\underline h })_* (\mu' -\mu)$ converges to zero when $\mu'$ converges  to $\mu$.  Taking  into account \eqref{eq:jconti}, we conclude  that $U$ is   a continuous map. 
By  Theorem \ref{thm:marginal} and taking into account the  condition (i) of Theorem \ref{thm:main},  $U$ is a 1-1 map.  By Lemma \ref{lem:vapnikl1},  
  the  map 
$$U^{-1}: (U (\Aa^\circ \times W^{-1}[0, 2d]))\to \Aa^\circ \times W^{-1}[0,d], \mu \mapsto (\mu_\Xx, \mu_{\Yy|\Xx}),$$  is  continuous. Since $U (\Aa^\circ \times W^{-1}[0, 2d])$ is  compact, $U^{-1}$  is uniformly continuous.  It follows  that  for any $\eps >0$ there  exists  $\delta (\eps) > 0$ such  that 
\begin{equation}\label{eq:uniform}
\|\mu-\mu'\|_{\tilde K_1}\le \delta (\eps) \LRA d_M (\mu_{\Yy|\Xx}, \mu'_{\Yy|\Xx}) \le \eps.
\end{equation}
Since   $\Pp_{\Xx \times \Yy} \subset  U (\Aa^\circ \times W^{-1}[0, 2d])$, \eqref{eq:uniform} implies  that $\delta (\eps)$ in Equation \ref{eq:v730} and  hence $\gamma_0 = \gamma (C_1, C_2, W(\mu_{\Yy|\Xx}), A^{-1}\mu, \eps)$ can be  chosen  independently  from $\mu$. This completes the  proof of   Corollary \ref{cor:main}(1).

(2) Assume  the condition  of  Corollary \ref{cor:main}(2). Since   the PDS   kernel $K$ is  $C^1$-differentiable, the  map
$  \delta: \Yy \to \Pp (\Yy)_{\tilde K_2},  y\mapsto   \delta_y,$  is a Lipschitz map. Hence  for any   $h \in \Hh$ the composition $\delta\circ h$ belongs to  $C_{Lip}(\Xx, \Pp(\Yy)_{\tilde K_2} )$.    
 To prove   Corollary \ref{cor:main}(2),   it suffices  to find  an algorithm
$\bar A: \cup_{n} (\Xx \times \Yy)^n \to \Hh$  with the  following property.
For any $\eps, \delta >0$  there exists  $N (\eps, \delta)$ such that  for any
$n \ge N(\eps, \delta)$ and for any $\mu \in \Pp_{\Xx \times \Yy}$ we have
\begin{equation}\label{eq:uniform2}
(\mu^n)^* \{S_n \in  (\Xx\times \Yy)^n :  R ^{K_1}_\mu (\bar A  (S_n))- \inf_{h \in \Hh} R^{K_1}_\mu  (h)  \ge \eps\} \le \delta.
\end{equation}

Let $A: \cup_n (\Xx \times \Yy)^n \to  C_{Lip}(\Xx, \Pp (\Yy)_{\tilde K_2})$ be   the  uniformly consistent  learning  algorithm    in the proof of Theorem \ref{thm:main}. By \eqref{eq:van2},   
for
any $\eps, \delta >0$  there exists  $N_A (\eps, \delta)$ such that  for any
$n \ge N_A(\eps, \delta)$ and for any $\mu \in \Pp_{\Xx \times \Yy}$ we have
\begin{equation}\label{eq:uniform3}
(\mu^n)^* \{S_n \in  (\Xx\times \Yy)^n : \| (\Gamma_{A  (S_n)})_*(\mu_{S_n})_\Xx -\mu\|_{\tilde K_1}  \ge \frac{\eps}{3}\} \le \delta.
\end{equation}
 Let $\bar A (S_n) \in  \Hh$ be  any element  such  that
\begin{align}\label{eq:a*}
   \|(\Gamma_{\bar A (S_n)})_*  (\mu_{S_n})_\Xx  - (\Gamma_{A (S_n)})_*(\mu_{S_n})_\Xx\|_{\tilde K_1}  \nonumber\\
    \le  \inf_{h \in \Hh} \| (\Gamma_h)_* (\mu_{S_n})_\Xx  - (\Gamma_{A (S_n)})_*(\mu_{S_n})_\Xx \|_{\tilde K_1} +\alpha_n
\end{align}
where $\alpha_n$ is a  sufficiently small number  to be  defined later.
Then we have
\begin{eqnarray}
 R ^{K_1}_\mu (\bar A  (S_n))- \inf_{h \in \Hh} R^{K_1}_\mu  (h) \nonumber\\
 \le \|(\Gamma_{\bar A (S_n)})_*  (\mu_{S_n})_\Xx  - (\Gamma_{A (S_n)})_*(\mu_{S_n})_\Xx\|_{\tilde K_1}  + \| (\Gamma_{A  (S_n)})_*(\mu_{S_n})_\Xx -\mu\|_{\tilde K_1}\nonumber \\
 -  \inf_{h \in \Hh} R^{K_1}_\mu  (h)\nonumber\\
 \le   \|(\Gamma_{\bar A (S_n)})_*  (\mu_{S_n})_\Xx  - (\Gamma_{A (S_n)})_*(\mu_{S_n})_\Xx\|_{\tilde K_1}  + \| (\Gamma_{A  (S_n)})_*(\mu_{S_n})_\Xx -\mu\|_{\tilde K_1}\nonumber \\
 -   \inf_{h \in \Hh} \| (\Gamma_h)_* (\mu_{S_n})_\Xx  - (\Gamma_{A (S_n)})_*(\mu_{S_n})_\Xx \|_{\tilde K_1} +  \| (\Gamma_{A  (S_n)})_*(\mu_{S_n})_\Xx -\mu\|_{\tilde K_1} \nonumber\\
 \stackrel{\eqref{eq:a*}}{\le} 2   \| (\Gamma_{A  (S_n)})_*\mu_\Xx -\mu\|_{\tilde K_1} +  \alpha_n. \label{eq:estinf}
\end{eqnarray}
Taking into  account \eqref{eq:uniform3}, we obtain  from  Equation  \ref{eq:estinf}  the following
\begin{equation}\label{eq:uniform4}
(\mu^n)^* \{S_n \in  (\Xx\times \Yy)^n : \|(\Gamma_{\bar A (S_n)})_*  (\mu_{S_n})_\Xx  - \mu\|_{\tilde K_1} \ge \frac{2\eps}{3} + \alpha_n\} \le \delta.
\end{equation}
By letting $\alpha _n \le \frac{\eps}{3}$,   we  obtain  \eqref{eq:uniform2} and, hence,  the  last assertion  of  Corollary \ref{cor:main} immediately from  \eqref{eq:uniform4}.
\end{proof}

\begin{example}\label{ex:main}  Let $\Xx = [0, 1]^n \subset \R^n \times \{0\} \subset \R^{n +m}$ and $\Yy = [0, 1]^m \subset \{ 0\}\times \R^m \subset \R^{n +m}$. Denote by $dx$ the  restriction of the  Lebesgue  measure on $\R^n$  to $\Xx$ and  by $dy$  the restriction of the Lebesgue measure  on $\R^m$  to $\Yy$. Let   $\Pp_{\Xx \times \Yy}$  consist  of  probability measures   $\mu_f: =f dxdy$, where  $f\in C^1 (\Xx\times \Yy)$ and moreover there exist  $c_1, c_0 > 0$ such that  $L (f) \le c_1$  and $ c_ 1\ge f (x, y)  \ge c_0>0$  and  for all $x, y \in \Xx \times \Yy$.  By the Arzel\`a-Ascoli  theorem,  $\Pp_{\Xx \times \Yy}$ is compact in the weak*-topology. We  shall show  that if the PDS kernel $K: \R^{n +m} \times \R^{n+m}\to \R$ in Theorem  \ref{thm:main} is continuously ($C^1$-)  differentiable, then  $\Pp_{\Xx\times \Yy}$ satisfies  the condition of Corollary \ref{cor:main}.  First  we note  that
$$\mu_\Xx=  q\, dx  \text { where }  q(x) = \int _\Xx  f (x, y) dy. $$
Since  $f(x, y) \ge  c_0$,  we have $q (x) \ge c_ 0$ for all $x\in \Xx$. Therefore   ${\rm  sppt}\mu_\Xx =  \Xx$.  Hence  the condition (i) in Theorem \ref{thm:main} on  $\mu_f$ is satisfied.
  
Next we note  that  $q  \in C^1 (\Xx)$, since $f \in C^1 (\Xx\times \Yy)$ and $\Xx \times \Yy$  is compact \cite[Theorem 16.11, p. 213]{Jost05}.  We observe that
+$$(\mu_f)_{\Yy|\Xx} (x) =  g (\cdot |x) \, dy, \text{ where  }  g (y|x) = \frac{ f (x, y)}{ q(x)}.$$
Note that $\mu_f: \Xx \to \Pp (\Yy), x \mapsto (\mu_f)_{\Yy|\Xx} (x), $ is a continuous  map with respect to the strong   topology  on $\Pp(\Yy)$, and hence $\mu_f$ is a measurable  map.  Let  $K_2: \Yy \times \Yy \to \R$ and  $K_1: (\Xx \times \Yy)\times (\Xx \times \Yy) \to \R$  be    continuous  bounded SPD kernels defined as in  Theorem \ref{thm:main}.   We shall  show  that both the maps 
$$\mathfrak M_{K_2}\circ \mu_f: \Xx \to \Hh (K_2), x \mapsto   \mathfrak M_{K_2}\circ \mu_f (x), $$
$$\mathfrak  M_{K_1} \circ \Gamma_{\mu_f}: \Xx \to  \Hh (K), x \mapsto  \mathfrak M_{K_1}  \circ \Gamma_{\mu_f} (x), $$
are $C^1$-differentiable. 

Let us first  prove that  $\mathfrak M_{K_2}\circ \mu_f$ is $C^1$-differentiable. 
Let $x_t$ be a smooth  curve $(-\eps, \eps) \to \Xx, t \mapsto x_t$.
Then
\begin{align}\label{eq:bochner}
\mathfrak M_{K_2}  \big (\mu_f (x_t)\big)    = \int _{ \Yy} (K_2) _{y}\, d\mu_f  (y|x_t) = \int _\Yy (K_2)_y  g(y | x_t)\, dy
\end{align}
where  the  RHS  of \eqref{eq:bochner} is understood  as the Bochner integral.
Since $K_2:\Yy \times \Yy \to \R $ is bounded and continuous,  and  $g(\cdot |\cdot)$ is  bounded  and $C^1$-differentiable  in $x$,  we can perform differentiation under  integral. Thus,   for  $v = \frac{dx_t}{dt}_{| t = 0} \in  T_{x_0} \Xx$  we have
$$ d_{x_0} (\mathfrak M_{K_2}  \circ \mu_f) v  = \int _\Yy (K_2)_y  \frac{d g(y | x_t)}{dt}_{| t =0}\, dy.$$
Hence, $\mathfrak M_{K_2}\circ \mu_f$ is $C^1$-differentiable.  Similarly, we prove   that $\mathfrak  M_{K_1} \circ \Gamma_{\mu_f}$  is  $C^1$-differentiable, using  also the  differentiability of  $K_1$  and of $g (\cdot |\cdot )$ in variable  $x$. Hence, these  maps are  Lipschitz continuous    with Lipschitz  constants  belonging to  an  bounded  interval  $[a', b']\subset \R$.  Consequently, $\mu_f$  satisfies  the  condition of Corollary \ref{cor:main}. 

 Note  that the Gaussian kernels  on $\R^n$ are $C^1$-differentiable but the  Laplacian kernels  are not  $C^1$-differentiable.
\end{example}

\section{Discussion of  results}\label{sec:discuss}

(1)  In this  paper   we have  demonstrated  the usefulness  of the new concept  of  a generative model of supervised  learning, which naturally incorporates    the concept of a correct loss function.  The generative model provides a unified framework for density estimation problems, supervised learning problems, and many problems in Bayesian statistics.

(2)  We have developed the theory of probabilistic morphisms and applied the obtained results to characterize regular conditional probability measures, which are key tools for defining a correct loss function in a generative model of supervised learning. Probabilistic morphisms also offer tools and insights for proving the generalizability of generative models in supervised learning.

(3) The usefulness of the inner measure,  equivalently, the  convergence in outer probability,  for defining and proving the generalization ability of statistical learning models has been demonstrated. 

 (4)  We   present  examples of uniformly  consistent  algorithms for overparameterized   discriminative models of supervised  learning (Corollary \ref{cor:main} (2)). 
 

(5)   To improve    Theorem \ref{thm:main} and Corollary \ref{cor:main},   we can relax  the condition (i)  in Theorem  \ref{thm:main}.  One possible approach could be a combination of the methods developed in this paper and techniques of vector-valued reproducing kernel Hilbert spaces (RKHSs), as developed in    \cite{CDT06}, \cite{CDTU10}.    Corollary  \ref{cor:main} and Examples \ref{ex:main}   suggest  that  the generalizability of  a   generative model   of supervised learning  $(\Xx, \Yy, \Hh, L, \Pp_{\Xx \times \Yy})$ depends  on   the geometry of   $\Hh$ and    $\Pp_{\Xx\times \Yy} $ as well as on the  choice of   positive definite symmetric  (PDS) kernels.  

(6) Another possible research direction could involve combining the methods developed in this paper with those presented in \cite{LV2009}.

\section*{Acknowledgement}
The author  wishes  to thank   Frederic  Protin for  proposing   her   the   loss function in \eqref{eq:lp}, which led  her  to  use  kernel  mean  embedding for  a characterization of  regular  conditional  probability measures   (Theorem \ref{thm:errorRKHS}). She    would like to thank Tobias Fritz  for  helpful comments    on     results  in   Section \ref{sec:unified}. 
A part  of this paper  has been   prepared  while the author  was Visiting   Professor   of the Kyoto  University  from July  till October  2022. She  is grateful to Kaoru Ono   and  the Research Institute of Mathematical Sciences  for their hospitality  and  excellent working  conditions.  The author   also thanks  Wilderich  Tuschmann  and Xia Kelin for inviting   her to the   Online Conference ``Applied Geometry for Data Sciences"   in
Chongquing, China, July  2022, where   she  had  an opportunity to  report   some results   of this   paper. Finally she  expresses  her indebtedness  to  Alexei Tuzhilin  for   stimulating discussions    on measure  theory  for a long time and his suggestion to use  outer measure   about   five years ago.

\end{document}